\theoremstyle{plain} \newtheorem{theorem}{Theorem}[section]
\newtheorem{corollary}[theorem]{Corollary}
\newtheorem{remark}[theorem]{Remark}
\newtheorem{proposition}[theorem]{Proposition}
\newtheorem{lemma}[theorem]{Lemma}
\newtheorem{claim}[theorem]{Claim}
\newcounter{previoustheorem}
\newenvironment{nestedenvironment}[1]{\counterwithin*{theorem}{previoustheorem}
    \def\nestedenvironmentcounter{#1}\protected@edef\theprevioustheorem{\csname the#1\endcsname}\setcounter{previoustheorem}{\value{#1}}\setcounter{#1}{0}\expandafter\def\csname the#1\endcsname{\theprevioustheorem\alph{#1}}\ignorespaces
}{\setcounter{\nestedenvironmentcounter}{\value{previoustheorem}}\counterwithout*{theorem}{previoustheorem} \ignorespacesafterend
}
\theoremstyle{definition} \newtheorem{definition}[theorem]{Definition}
\newtheorem{example}[theorem]{Example}
\DeclareMathOperator{\match}{M}
\DeclareMathOperator{\perfmatch}{PM}
\DeclareMathOperator{\linspan}{span}
\DeclareMathOperator{\flowext}{FLX}
\DeclareExpandableDocumentCommand{\IfNoValueOrEmptyTF}{mmm}
{
  \IfNoValueTF{#1}{#2}
  {
    \tl_if_empty:nTF {#1} {#2} {#3}
  }
}
  \NewDocumentCommand\ie{}{\textcolor{red!50}{i.\,e.}}
  \NewDocumentCommand\ie{}{i.\,e.}
  \NewDocumentCommand\CTPtext{}{\textcolor{red!50}{cyclic transversal polytope}\xspace}
  \NewDocumentCommand\CTPtext{}{cyclic transversal polytope\xspace}
\newcommand{\R}{\mathbb{R}}
\newcommand{\Z}{\mathbb{Z}}
\newcommand{\F}{\mathbb{F}_2}
\newcommand{\B}{\mathcal{B}} 
  \newcommand{\zerovec}{\textcolor{red!50}{\mathbb{O}}}
  \newcommand{\onevec}{\textcolor{red!50}{\mathbbm{1}}}
  \newcommand{\unitvec}{\textcolor{red!50}{\mathbbm{e}}}
  \newcommand{\zerovec}{\mathbb{O}}
  \newcommand{\onevec}{\mathbbm{1}}
  \newcommand{\unitvec}{\mathbbm{e}}
  \DeclareMathOperator{\losOP}{\textcolor{red!50}{los}}
  \DeclareMathOperator{\losOP}{los}
\newcommand{\los}[3]{\losOP^{#1}_{{#2},{#3}}}
  \DeclareMathOperator{\conv}{\textcolor{red!50}{conv}}
  \DeclareMathOperator{\bigO}{\textcolor{red!50}{\mathcal{O}}}
  \DeclareMathOperator{\len}{\textcolor{red!50}{len}}
  \DeclareMathOperator{\rank}{\textcolor{red!50}{rank}}
  \DeclareMathOperator{\size}{\textcolor{red!50}{size}}
  \DeclareMathOperator{\true}{\textcolor{red!50}{true}}
  \DeclareMathOperator{\false}{\textcolor{red!50}{false}}
  \DeclareMathOperator{\conv}{conv}
  \DeclareMathOperator{\bigO}{\mathcal{O}}
  \DeclareMathOperator{\len}{len}
  \DeclareMathOperator{\rank}{rank}
  \DeclareMathOperator{\size}{size}
  \DeclareMathOperator{\true}{true}
  \DeclareMathOperator{\false}{false}
  \DeclareMathOperator{\NP}{\textcolor{red!50}{NP}} }{
  \DeclareMathOperator{\NP}{NP} }
\NewDocumentCommand\blockconfigglyph{}{\B}
\NewDocumentCommand\blockglyph{}{\B}
\NewDocumentCommand\blockelementglyph{}{\xi}
  \NewDocumentCommand\blockconfig{oo}{\textcolor{red!50}{\IfNoValueOrEmptyTF{#2}{\IfNoValueOrEmptyTF{#1}{\blockconfigglyph}{\blockconfigglyph_{#1}}}{\IfNoValueOrEmptyTF{#1}{\blockconfigglyph^{#2}}{\blockconfigglyph_{#1}^{#2}}}}} \NewDocumentCommand\block{oo}{\textcolor{red!50}{\IfNoValueOrEmptyTF{#2}{\IfNoValueOrEmptyTF{#1}{\blockglyph}{\blockglyph({#1})}}{\IfNoValueOrEmptyTF{#1}{\blockglyph_{#2}}{\blockglyph_{#2}({#1})}}}} \NewDocumentCommand\blockelement{soo}{\textcolor{red!50}{\IfNoValueOrEmptyTF{#3}{\blockelementglyph\IfNoValueOrEmptyTF{#2}{}{\IfBooleanTF{#1}{\left[#2\right]}{\left(#2\right)}}}{\blockelementglyph_{#3}\IfNoValueOrEmptyTF{#2}{}{\IfBooleanTF{#1}{\left[#2\right]}{\left(#2\right)}}}}} \NewDocumentCommand\blockfamily{D<>{1}O{\ctorder}}{\textcolor{red!50}{(\block[#1], \ldots, \block[#2])}}
  \NewDocumentCommand\vectorsection{mo}{\textcolor{red!50}{\IfNoValueOrEmptyTF{#2}{#1\vert}{#1\vert_{#2}}}} 
  \NewDocumentCommand\ctdim{o}{\textcolor{red!50}{\IfNoValueOrEmptyTF{#1}{r}{r_{#1}}}} \NewDocumentCommand\ctrank{o}{\textcolor{red!50}{\IfNoValueOrEmptyTF{#1}{d}{d_{#1}}}} \NewDocumentCommand\ctorder{o}{\textcolor{red!50}{\IfNoValueOrEmptyTF{#1}{n}{n_{#1}}}} \NewDocumentCommand\ctsize{o}{\textcolor{red!50}{\IfNoValueOrEmptyTF{#1}{s}{s_{#1}}}} \NewDocumentCommand\venuespace{o}{\textcolor{red!50}{\IfNoValueTF{#1}{\F^{\ctrank}}{\F^{#1}}}} \NewDocumentCommand\linsubspace{o}{\textcolor{red!50}{\IfNoValueTF{#1}{L^{\blockconfig}}{L^{#1}}}} }{
  \NewDocumentCommand\blockconfig{oo}{\IfNoValueOrEmptyTF{#2}{\IfNoValueOrEmptyTF{#1}{\blockconfigglyph}{\blockconfigglyph_{#1}}}{\IfNoValueOrEmptyTF{#1}{\blockconfigglyph^{#2}}{\blockconfigglyph_{#1}^{#2}}}} \NewDocumentCommand\block{oo}{\IfNoValueOrEmptyTF{#2}{\IfNoValueOrEmptyTF{#1}{\blockglyph}{\blockglyph({#1})}}{\IfNoValueOrEmptyTF{#1}{\blockglyph_{#2}}{\blockglyph_{#2}({#1})}}} \NewDocumentCommand\blockelement{soo}{\IfNoValueOrEmptyTF{#3}{\blockelementglyph\IfNoValueOrEmptyTF{#2}{}{\IfBooleanTF{#1}{\left[#2\right]}{\left(#2\right)}}}{\blockelementglyph_{#3}\IfNoValueOrEmptyTF{#2}{}{\IfBooleanTF{#1}{\left[#2\right]}{\left(#2\right)}}}} \NewDocumentCommand\blockfamily{D<>{1}O{\ctorder}}{(\block[#1], \ldots, \block[#2])}
  \NewDocumentCommand\vectorsection{mo}{\IfNoValueOrEmptyTF{#2}{#1\vert}{#1\vert_{#2}}} 
  \NewDocumentCommand\ctdim{o}{\IfNoValueOrEmptyTF{#1}{r}{r_{#1}}} \NewDocumentCommand\ctrank{o}{\IfNoValueOrEmptyTF{#1}{d}{d_{#1}}} \NewDocumentCommand\ctorder{o}{\IfNoValueOrEmptyTF{#1}{n}{n_{#1}}} \NewDocumentCommand\ctsize{o}{\IfNoValueOrEmptyTF{#1}{s}{s_{#1}}} \NewDocumentCommand\venuespace{o}{\IfNoValueTF{#1}{\F^{\ctrank}}{\F^{#1}}} \NewDocumentCommand\linsubspace{o}{\IfNoValueTF{#1}{L^{\blockconfig}}{L^{#1}}} }
 \NewDocumentCommand\norm{mo}{\IfNoValueOrEmptyTF{#2}{\left\lVert #1 \right\rVert}{\left\lVert #1 \right\rVert_{#2}}}  
  \NewDocumentCommand\CT{oo}{\textcolor{red!50}{\IfValueTF{#2}{\operatorname{CT}_{#2}}{\operatorname{CT}}\IfNoValueOrEmptyTF{#1}{}{\left(#1\right)}}} \NewDocumentCommand\TP{oo}{\textcolor{red!50}{\IfValueTF{#2}{\operatorname{TP}_{#2}}{\operatorname{TP}}\IfNoValueOrEmptyTF{#1}{}{\left(#1\right)}}} \NewDocumentCommand\CTP{oo}{\textcolor{red!50}{\IfValueTF{#2}{\operatorname{CTP}_{#2}}{\operatorname{CTP}}\IfNoValueOrEmptyTF{#1}{}{\left(#1\right)}}} \NewDocumentCommand\BSP{o}{\textcolor{red!50}{\operatorname{BSP}\IfNoValueOrEmptyTF{#1}{}{\left(#1\right)}}} \NewDocumentCommand\EVEN{o}{\textcolor{red!50}{\operatorname{EVEN}\IfNoValueOrEmptyTF{#1}{}{\left(#1\right)}}} \NewDocumentCommand\PIP{o}{\textcolor{red!50}{\operatorname{CTP}\IfNoValueOrEmptyTF{#1}{}{\left(#1\right)}}} \NewDocumentCommand\STAB{o}{\textcolor{red!50}{\operatorname{STAB}\IfNoValueOrEmptyTF{#1}{}{\left(#1\right)}}} \NewDocumentCommand\PACK{o}{\textcolor{red!50}{\operatorname{PACK}\IfNoValueOrEmptyTF{#1}{}{\left(#1\right)}}} \NewDocumentCommand\CROSS{o}{\textcolor{red!50}{\operatorname{CROSS}\IfNoValueOrEmptyTF{#1}{}{\left(#1\right)}}} \NewDocumentCommand\TSP{o}{\textcolor{red!50}{\operatorname{TSP}\IfNoValueOrEmptyTF{#1}{}{\left(#1\right)}}} \NewDocumentCommand\STP{o}{\textcolor{red!50}{\operatorname{STP}\IfNoValueOrEmptyTF{#1}{}{\left(#1\right)}}} }{
  \NewDocumentCommand\CT{oo}{\IfValueTF{#2}{\operatorname{CT}_{#2}}{\operatorname{CT}}\IfNoValueOrEmptyTF{#1}{}{\left(#1\right)}} \NewDocumentCommand\TP{oo}{\IfValueTF{#2}{\operatorname{TP}_{#2}}{\operatorname{TP}}\IfNoValueOrEmptyTF{#1}{}{\left(#1\right)}} \NewDocumentCommand\CTP{oo}{\IfValueTF{#2}{\operatorname{CTP}_{#2}}{\operatorname{CTP}}\IfNoValueOrEmptyTF{#1}{}{\left(#1\right)}} \NewDocumentCommand\BSP{o}{\operatorname{BSP}\IfNoValueOrEmptyTF{#1}{}{\left(#1\right)}} \NewDocumentCommand\EVEN{o}{\operatorname{EVEN}\IfNoValueOrEmptyTF{#1}{}{\left(#1\right)}} \NewDocumentCommand\PIP{o}{\operatorname{CTP}\IfNoValueOrEmptyTF{#1}{}{\left(#1\right)}} \NewDocumentCommand\STAB{o}{\operatorname{STAB}\IfNoValueOrEmptyTF{#1}{}{\left(#1\right)}} \NewDocumentCommand\PACK{o}{\operatorname{PACK}\IfNoValueOrEmptyTF{#1}{}{\left(#1\right)}} \NewDocumentCommand\CROSS{o}{\operatorname{CROSS}\IfNoValueOrEmptyTF{#1}{}{\left(#1\right)}} \NewDocumentCommand\TSP{o}{\operatorname{TSP}\IfNoValueOrEmptyTF{#1}{}{\left(#1\right)}} \NewDocumentCommand\STP{o}{\operatorname{STP}\IfNoValueOrEmptyTF{#1}{}{\left(#1\right)}} }
  \NewDocumentCommand\simplex{o}{\textcolor{red!50}{\IfNoValueOrEmptyTF{#1}{\Delta}{\Delta_{#1}}}}
  \NewDocumentCommand\zeroone{o}{\textcolor{red!50}{\IfNoValueOrEmptyTF{#1}{\setdef{0,1}}{\setdef{0,1}^{#1}}}}
  \NewDocumentCommand\unitint{o}{\textcolor{red!50}{\IfNoValueOrEmptyTF{#1}{\left[0,1\right]}{\left[0,1\right]^{#1}}}}
  \NewDocumentCommand\affspace{o}{\textcolor{red!50}{\mathbb{A}\IfNoValueOrEmptyTF{#1}{}{\left(#1\right)}}} \NewDocumentCommand\cube{o}{\textcolor{red!50}{\operatorname{C}\IfNoValueOrEmptyTF{#1}{}{\left(#1\right)}}}
  \NewDocumentCommand\relproj{oo}{\textcolor{red!50}{\IfValueTF{#2}{f_{#2}}{f}\IfNoValueOrEmptyTF{#1}{}{\left(#1\right)}}}
  \NewDocumentCommand\relmap{oo}{\textcolor{red!50}{\IfValueTF{#1}{\kappa_{#1}}{\kappa}\IfValueTF{#2}{\left(#2\right)}{}}}
  \NewDocumentCommand\projrelax{oo}{\textcolor{red!50}{\IfNoValueOrEmptyTF{#2}{\operatorname{RP}}{\operatorname{RP}_{#2}}}\IfNoValueOrEmptyTF{#1}{}{\left(#1\right)}} \NewDocumentCommand\projrelaxstrengthened{oo}{\textcolor{red!50}{\IfNoValueOrEmptyTF{#2}{\operatorname{RP}^+}{\operatorname{RP}^+_{#2}}\IfNoValueOrEmptyTF{#1}{}{\left(#1\right)}}} \NewDocumentCommand\netproj{mo}{\textcolor{red!50}{\pi_{#1}\IfNoValueOrEmptyTF{#2}{}{\left(#2\right)}}} }{
  \NewDocumentCommand\simplex{o}{\IfNoValueOrEmptyTF{#1}{\Delta}{\Delta_{#1}}}
  \NewDocumentCommand\zeroone{o}{\IfNoValueOrEmptyTF{#1}{\setdef{0,1}}{\setdef{0,1}^{#1}}}
  \NewDocumentCommand\unitint{o}{\IfNoValueOrEmptyTF{#1}{\left[0,1\right]}{\left[0,1\right]^{#1}}}
  \NewDocumentCommand\affspace{o}{\mathbb{A}\IfNoValueOrEmptyTF{#1}{}{\left(#1\right)}} \NewDocumentCommand\cube{o}{\operatorname{C}\IfNoValueOrEmptyTF{#1}{}{\left(#1\right)}}
  \NewDocumentCommand\relproj{oo}{\IfValueTF{#2}{f_{#2}}{f}\IfNoValueOrEmptyTF{#1}{}{\left(#1\right)}}
  \NewDocumentCommand\relmap{oo}{\IfValueTF{#1}{\kappa_{#1}}{\kappa}\IfValueTF{#2}{\left(#2\right)}{}}
  \NewDocumentCommand\projrelax{oo}{\IfNoValueOrEmptyTF{#2}{\operatorname{RP}}{\operatorname{RP}_{#2}}\IfNoValueOrEmptyTF{#1}{}{\left(#1\right)}} \NewDocumentCommand\projrelaxstrengthened{oo}{\IfNoValueOrEmptyTF{#2}{\operatorname{RP}^+}{\operatorname{RP}^+_{#2}}\IfNoValueOrEmptyTF{#1}{}{\left(#1\right)}} \NewDocumentCommand\netproj{mo}{\pi_{#1}\IfNoValueOrEmptyTF{#2}{}{\left(#2\right)}} }
\NewDocumentCommand\overlinemod{m}{\bar{#1}} 
\NewDocumentCommand\venuespacemod{o}{\IfNoValueTF{#1}{\F^{\overlinemod{\ctrank}}}{\F^{\overlinemod{#1}}}}
\NewDocumentCommand\blockconfigmod{oo}{\IfNoValueOrEmptyTF{#2}{\IfNoValueOrEmptyTF{#1}{\overlinemod{\blockconfigglyph}}{\overlinemod{\blockconfigglyph}_{#1}}}{\IfNoValueOrEmptyTF{#1}{\overlinemod{\blockconfigglyph}^{#2}}{\overlinemod{\blockconfigglyph}_{#1}^{#2}}}}
\NewDocumentCommand\blockmod{oo}{\IfNoValueOrEmptyTF{#2}{\IfNoValueOrEmptyTF{#1}{\overlinemod{\blockglyph}}{\overlinemod{\blockglyph}_{#1}}}{\IfNoValueOrEmptyTF{#1}{\overlinemod{\blockglyph}^{#2}}{\overlinemod{\blockglyph}_{#1}^{#2}}}}
\NewDocumentCommand\blockelementmod{oo}{\IfNoValueOrEmptyTF{#2}{\overlinemod{\blockelementglyph}\IfNoValueOrEmptyTF{#1}{}{\left(#1\right)}}{\overlinemod{\blockelementglyph}_{#2}\IfNoValueOrEmptyTF{#1}{}{\left(#1\right)}}}
\NewDocumentCommand\blockfamilymod{D<>{1}O{\ctorder}}{\blockmod[#1], \ldots, \blockmod[#2]}
  \NewDocumentCommand\suchthat{}{\textcolor{red!50}{\;\ifnum\currentgrouptype=16 \middle\fi|\;}} \NewDocumentCommand\setdef{mo}{\textcolor{red!50}{\left\{#1\IfNoValueOrEmptyTF{#2}{}{ \suchthat #2}\right\}}} \NewDocumentCommand\tuple{mo}{\textcolor{red!50}{\left(#1\IfNoValueOrEmptyTF{#2}{}{ \suchthat #2}\right)}} }{
  \NewDocumentCommand\suchthat{}{\;\ifnum\currentgrouptype=16 \middle\fi|\;} \NewDocumentCommand\setdef{mo}{\left\{#1\IfNoValueOrEmptyTF{#2}{}{ \suchthat #2}\right\}} \NewDocumentCommand\tuple{mo}{\left(#1\IfNoValueOrEmptyTF{#2}{}{ \suchthat #2}\right)} }
\title{Binary Cyclic Transversal Polytopes}
\author[1]{Jonas Frede~\orcidlink{0009-0004-3806-7726}}
\author[1]{Volker Kaibel~\orcidlink{0000-0002-0388-7597}}
\author[2]{Maximilian Merkert~\orcidlink{0000-0002-7838-445X}}
\date{\phantom{\today}}
\affil[1]{
Otto von Guericke University Magdeburg, Department of Mathematics, Universit\"{a}tsplatz 2, 39106 Magdeburg, Germany
}
\affil[2]{
Technische Universit\"{a}t Braunschweig, Institute for Mathematical Optimization, Universit\"{a}tsplatz~2, 38106 Braunschweig, Germany
}
\begin{document}

    \maketitle

    \begin{abstract}
        With every family of finitely many subsets of a finite-dimensional vector space over the Galois-field with two elements we associate a \emph{cyclic transversal polytope}. It turns out that those polytopes generalize several well-known polytopes that are relevant in combinatorial optimization, among them cut polytopes as well as stable set and matching polytopes. We introduce the class of lifted odd-set inequalities and prove results demonstrating their strength. In particular, we show that they suffice to describe cyclic transversal polytopes if the union of the sets in the family has rank at most two. We also describe extended formulations for cyclic transversal polytopes and introduce a special relaxation hierarchy for them.
        \\\\
\textbf{Keywords:} Combinatorial Optimization, Cyclic Transversal Polytope, Extended Formulation, Integer Programming, Lifted Odd-Set Inequalities, Relaxation Hierarchy
		\\\\
\textbf{Mathematics Subject Classification (2020):} 05C21, 52B12, 90C10, 90C27, 90C57
\end{abstract}

    \section{Introduction}

This paper introduces binary cyclic transversal polytopes. As all the concepts we are going to define refer to the field $\F$, the binary Galois-field with two elements, we will omit the term \emph{binary} from the corresponding notions in the following.

\begin{definition}
  A \emph{block configuration} in $\F^d$ (with $d \ge 1$) is a sequence $\blockconfig = \blockfamily$ (with $n \ge 2$) of non-empty \emph{blocks} $\varnothing \ne \block[i] \subseteq \F^d$.
  We call $n$ its \emph{length}, and $\sum_{i=1}^n |\block[i]|$ its \emph{size}. We define $\linspan(\B)$ to be the linear subspace of $\F^d$ spanned by $\block[1] \cup \ldots \cup \block[n]$ and refer to $\rank(\B)\coloneqq \dim(\linspan(\B))$ as the \emph{rank} of $\blockconfig$.
  Any sequence $\xi = (\xi(1),\dots,\xi(n))$ with $\xi(i) \in \block[i]$ for all $i \in [n]$ is called a \emph{transversal} of $\blockconfig$.
  A sequence $\sigma=(\sigma(1),\dots,\sigma(n))$ with $\sigma(i) \in \F^d$ for each $i \in [n]$ is called \emph{cyclic} if $\sum_{i=1}^n \sigma(i) = \zerovec$ holds.
  A \emph{cyclic transversal} of a block configuration $\blockconfig$ is a transversal of $\blockconfig$ that is cyclic. We denote the set of all cyclic transversals of $\blockconfig$ by $\CT[\B]$.
  The \emph{incidence vector} of a transversal $\blockelement$ is the vector $x = (x^1, \ldots, x^n)$ with $x^i \in \{0,1\}^{\block[i]}$ for all $i \in [n]$ and $x^i_{\omega} = 1$ if and only if
  $\blockelement[i] = \omega$ for all $\omega\in\block[i]$.
  We refer to the convex hulls
  \[
    \CTP[\B] \subseteq \TP[\B] \subseteq \R^{\block[1]\times\cdots\times\block[n]} \eqqcolon \R^{\B}
  \]
  of the incidence vectors of all cyclic transversals of $\blockconfig$ and of all transversals of $\blockconfig$
  as the \emph{cyclic transversal polytope}
  and the \emph{transversal polytope}, respectively, associated with $\blockconfig$.
\end{definition}

While cyclic transversal polytopes are complex objects in general, transversal polytopes are rather trivial ones.

\begin{remark}
  The transversal polytope $\TP[\B]$ of a block configuration $\blockconfig$ of length $n$ is described by the nonnegativity constraints and the \emph{transversal equations}
  \begin{equation}\label{eq:transversal}
    \sum_{\omega\in\block[i]} x^i_{\omega} = 1
    \quad\text{for all }i \in [n]\,.
  \end{equation}
\end{remark}

Before we elaborate in more detail on the expressive power of the concept of cyclic transversal polytopes in Section~\ref{sec:expressive_power}, we discuss a first class of examples.

\begin{example}
  \label{ex:binary_matroids}
  For a matrix $M \in \F^{d \times n}$ we define  the \emph{binary subspace polytope}
  \[
    \BSP[M] \coloneqq \conv \{ x \in \{0,1\}^n : Mx = \zerovec \in \F^d\}\,,
  \]
  \ie, the convex hull of the kernel of $M$ viewed as a subset of $\{0,1\}^n$ (it has  also been referred to as the \emph{cycle polytope of the binary matroid} associated with $M$, see, e.g., \cite{BarahonaGroetschel86}). The most simple special case is the \emph{parity polytope}, i.e., the convex hull
  \(
    \EVEN[n] \coloneqq \BSP[[1,\dots,1]]
  \)
  of the 0/1-vectors of length $n$ with an even number of one-entries.
  Particularly interesting examples arise when the rows of $M$
  generate the $\F$-cycle space  of a graph $G$ which results in $\BSP[M]$ being affinely isomorphic to the cut-polytope of $G$.

  If $M$ does not have any column equal to $\zerovec$, then $\BSP[M]$ is affinely isomorphic to $\CTP[\B]$ with
  \(
    \B = (\{M_{\star,1},\zerovec\},\dots,\{M_{\star,q},\zerovec\},\{b\})
  \)
  (where $M_{\star,i}$ denotes the $i$-th column of $M$).

\end{example}

We will demonstrate in Section~\ref{sec:expressive_power} that also set packing polytopes, in particular stable set and matching polytopes, as well as 2-SAT-polytopes are examples of cyclic transversal polytopes. Moreover, it will turn out that each 0/1-polytope that is associated with a well-posed combinatorial optimization problem (in the sense that it is possible to efficiently verify feasibility of solutions) can be represented as a linear projection of a cyclic transversal polytope associated with a block configuration whose size is bounded polynomially in the encoding length of the instance.

As cyclic transversal polytopes thus seem to form a rather general class that comprises a rather broad spectrum of polytopes relevant in combinatorial optimization, the question arises whether the
class has enough structure to allow to identify interesting common properties of its members. The aim of this paper is to show that this seems to be the case. Our main contributions in this direction are to introduce the class of \emph{lifted odd-set inequalities} and to prove that they are facet defining for \emph{full cyclic transversal polytopes} (Section~\ref{sec:eq_and_ieneq}) as well as to show that they provide us with complete linear descriptions for cyclic transversal polytopes associated with block configurations of rank at most two (Section~\ref{sec:lowrank}). We also elaborate on extended formulations for cyclic transversal polytopes (Section~\ref{sec:ext_form}) and describe a path to form a special relaxation hierarchy for them (Section~\ref{sec:rank_relax}).

Most of the results presented in this paper as well as some further findings are contained in the dissertation~\cite{Frede2023} of one of the authors (Jonas Frede).
     \section{Fundamental properties of CTPs}
\label{sec:concept}

We start by providing a few more very basic examples of cyclic transversal polytopes.

\begin{example}
  \label{ex:simplex}
  For every subset $\varnothing \ne \Omega \subseteq \F^d$ the polytope $\CTP[\B]$ with $\B = (\Omega,\Omega)$ is affinely isomorphic to the $(|\Omega|-1)$-dimensional simplex with $|\Omega|$ vertices. Hence each $q$-dimensional simplex is affinely isomorphic to a cyclic trans\-versal polytope associated with a block configuration of size $2(q+1)$ and rank $\lceil\log_2 (q+1) \rceil$.
  Conversely, if $\B=(\block[1],\block[2])$ is a block configuration of length two, then $\CTP[\B]$ is isomorphic to a $(|\block[1]\cap\block[2]|-1)$-dimensional simplex (where the empty set is considered to be a simplex of dimension $-1$).
\end{example}

\begin{remark}
  Example~\ref{ex:simplex} in particular shows that cyclic transversal polytopes associated with block configurations of length at most two are affinely isomorphic to simplices.
\end{remark}

\begin{example}
  Let $\B_1$ and $\B_2$ be two block configurations in $\F^{d_1}$ and $\F^{d_2}$ with lengths $n_1$ and $n_2$, sizes $s_1$ and $s_2$, and ranks $r_1$ and $r_2$, respectively. We denote by $\blockconfig$ the block configuration in $\F^{d_1+d_2}$ with length $n_1+n_2$, size $s_1+s_2$, and rank $r_1+r_2$ formed by the blocks
  \(
    \block[1][1] \times \{\zerovec_{d_2}\}
    ,\dots,
    \block[n_1][1] \times \{\zerovec_{d_2}\}
  \)
  and
  \(
    \{\zerovec_{d_1}\} \times \block[1][2],
    \dots,
    \{\zerovec_{d_1}\} \times \block[n_2][2]
  \).
  Then $\CTP[\B]$ is affinely isomorphic to the Cartesian product
  \(
      \CTP[\B_1]\times\CTP[\B_2]
  \).
\end{example}

\begin{example}
  From the previous two examples one readily deduces that every $q$-dimensional cube (i.e., a Cartesian product of $q$ one-dimensional simplices) is affinely isomorphic to a cyclic transversal polytope associated with a block configuration of size $4q$ and rank $q$.
\end{example}

\begin{definition}\label{def:full_CTP}
    We define $\B(d,n)$ to be the block configuration of length $n$ that has each block equal to $\F^d$, denote $\CT[d,n] \coloneqq \CT[\B(d,n)]$, and call
    \[
      \CTP[d,n] \coloneqq \CTP[\B(d,n)]
    \]
    the \emph{full cyclic transversal polytope} of length $n$ and rank $d$.
\end{definition}

    As a first example we observe that $\CTP[1,n]$ is isomorphic to $\EVEN[n]$
    via the coordinate projection with $x_i = x^i_1$ for all $i \in [n]$ (recall that the transversal equations $x^i_0+x^i_1=1$ hold for $\CTP[1,n]$).

\begin{definition}
For any block configuration $\blockconfig$ of length $n$ in $\F^d$ we define the linear subspace
  \[
  \linsubspace[\B] :=
    \{
      x \in \R^{\B(d,n)} : x^i_{\omega} = 0 \text{ for all }i \in [n], \omega \in \F^d\setminus \block[i]
    \}
  \]
  and the coordinate projection
  \[
    \pi^{\B} : \R^{\B(d,n)} \rightarrow \R^{\B}\,.
  \]
\end{definition}

\begin{remark}
    \label{rem:CTP_face_of_full_CTP}
    For each block configuration $\blockconfig$ in $\F^d$ of length $n$, the map $\pi^{\B}$ induces an isomorphism between the face $\CTP[d,n]\cap \linsubspace[\blockconfig]$ of the full cyclic transversal polytope $\CTP[d,n]$ and $\CTP[\B]$. If $\bar{a}^T\bar{x}\ge b$ is an inequality that is valid for $\CTP[d,n]$, then the \emph{$\blockconfig$-restriction} $a^Tx \ge b$ of $\bar{a}^T\bar{x} \ge b$ with $a = \pi^{\B}(\bar{a})$ is valid for $\CTP[\B]$. If we have
    \[
      \CTP[d,n] = \{\bar{x} \in \TP[d,n] : \bar{A}\bar{x} \ge b\}
    \]
    and $Ax\ge b$ is the system formed by the $\blockconfig$-restrictions of the inequalities in $\bar{A}\bar{x}\ge b$, then
    \[
        \CTP[\B] = \{x \in \TP[\B] : Ax \ge b\}
    \]
    holds.
\end{remark}

\begin{definition}
For a block configuration $\blockconfig$ of length $n$ in $\F^d$ and a linear map $\varphi : \F^d \rightarrow \F^{\bar{d}}$ we consider the block configuration $\varphi(\blockconfig) \coloneqq \blockconfigmod$ of the same length $n$  in $\F^{\bar{d}}$ with $\blockmod[i] \coloneqq \varphi(\block[i])$ for all $i \in [n]$. We refer to the  map
\(
  \Phi: \TP[\B] \rightarrow \R^{\blockmod}
\),
defined via
\[
  \Phi(x)^i_{\bar{\omega}} \coloneqq \sum_{\omega \in \block[i] : \varphi(\omega) = \bar{\omega}} x^i_{\omega}
  \quad\text{for all }i \in [n], \bar{\omega} \in \blockmod[i], x \in \TP[\B]
\]
as the \emph{$\varphi$-induced map} (w.r.t. $\blockconfig$).
\end{definition}

In the above definition, we restrict the domain of $\Phi$ to $\TP[\B]$ in order to have preimages of $\Phi$ being contained in $\TP[\B]$, which is going to simplify notations later.

As $\varphi$ is linear, it maps the cyclic transversals of $\blockconfig$ to some cyclic transversals of $\blockmod$. Hence
we have
\begin{equation}\label{eq:Phi_of_CTP}
    \Phi(\CTP[\blockconfig]) \subseteq \CTP[\blockconfigmod]\,,
\end{equation}
and for any inequality
\begin{equation}\label{eq:lifting_ieq}
  \sum_{i=1}^n \sum_{\bar{\omega} \in \blockmod[i]} \bar{a}^i_{\bar{\omega}}\bar{x}^i_{\bar{\omega}} \ge \beta
\end{equation}
that is valid for $\CTP[\blockconfigmod]$ the inequality
\begin{equation}\label{eq:lifting_lifted_ieq}
  \sum_{i=1}^n \sum_{\bar{\omega} \in \blockmod[i]}\sum_{\omega \in \block[i] : \varphi(\omega) = \bar{\omega}}\bar{a}^i_{\bar{\omega}} x^i_{\omega}\ge \beta
\end{equation}
is valid for $\CTP[\blockconfig]$.
We refer to~\eqref{eq:lifting_lifted_ieq} as the \emph{$\varphi$-lifting of the inequality}~\eqref{eq:lifting_ieq}.

\begin{remark}\label{rem:lifting_ieqs}
    Let $\blockconfig$ be a block configuration in $\F^d$ and $\varphi : \F^d\rightarrow \F^{\bar{d}}$ a linear map with $\varphi$-induced map $\Phi : \TP[\B] \rightarrow \R^{\varphi(\blockconfig)}$. If
    \[
      \CTP[\varphi(\blockconfig)] = \{\bar{x} \in \TP[\varphi(\blockconfig)]:\bar{A}\bar{x}\ge b\}
    \]
    holds and $Ax\ge b$ is the system of inequalities obtained by $\varphi$-lifting the inequalities in $\bar{A}\bar{x}\ge b$, then we have
    \[
      \Phi^{-1}(\CTP[\varphi(\blockconfig)]) = \{x\in\TP[\B]:Ax\ge b\}\,.
    \]
  \end{remark}

  If the linear map $\varphi:\F^d\rightarrow\F^{\bar{d}}$ is injective on $\linspan(\B)$, then $\Phi$ induces an isomorphism between $\CTP[\B]$ and $\CTP[\varphi(\blockconfig)]$, and thus we have $\Phi(\CTP[\blockconfig]) = \CTP[\varphi(\blockconfig)]$ and $\CTP[\B] = \Phi^{-1}(\CTP[\varphi(\blockconfig)])$. In particular we conclude the following.

\begin{remark}\label{rem:block_iso}
    Let $\blockconfig$ be a block configuration in $\F^d$ of length $n$ and rank $r = \rank(\B)$. If
    \[
        \varphi : \F^d \rightarrow \F^{r}
    \]
    is a linear map that induces an isomorphism between $\linspan(\B)$ and $\F^{r}$, then $\Phi^{-1}\circ\pi^{\varphi(\blockconfig)}$ induces an isomorphism between $\CTP[\blockconfig]$ and the face $\CTP[r,n] \cap \linsubspace[\varphi(\blockconfig)]$ of the full cyclic transversal polytope $\CTP[r,n]$.
\end{remark}

\begin{definition}
 For a block configuration $\blockconfig$ in $\F^d$ of length $n$ and a cyclic sequence $\sigma=(\sigma(1),\dots,\sigma(n)) \in (\F^d)^n$ we call the block configuration
\[
  \B+\sigma \coloneqq (\block[1]+\sigma(1),\dots,\block[n]+\sigma(n))
\]
with $\block[i]+\sigma(i) \coloneqq \{\omega + \sigma(i) : \omega \in \block[i]\}$ for each $i \in [n]$ the \emph{$\sigma$-shift} of $\blockconfig$.
\end{definition}

Note that for $\sigma \in \CT[\B]$ each block in $\B+\sigma$ contains $\zerovec$.

Since the sum of two cyclic sequences is a cyclic sequence, we have the following.

\begin{remark}\label{rem:cyclic_shift}
    If $\blockconfig$ is a block configuration and $\sigma$ is a cyclic sequence, then $\CTP[\B]$ and $\CTP[\B+\sigma]$ are affinely isomorphic by means of the coordinate bijection $\Sigma:\R^{\B} \rightarrow \R^{\B+\sigma}$ with
    \[
        \Sigma(x)^i_{\bar{\omega}} \coloneqq x^i_{\bar{\omega} - \sigma(i)} = x^i_{\bar{\omega} + \sigma(i)}
    \]
    for all $i \in [n]$ and $\bar{\omega} \in \block[i]+\sigma(i)$.
\end{remark}

     \section{The expressive power of CTPs}
\label{sec:expressive_power}

\subsection{Satisfiability polytopes and a universality theorem}

\begin{definition}\label{def:kSAT}
    For $k\in\{2,3,\dots\}$ we define a \emph{$k$-SAT formula} to be a Boolean formula $\varphi = \land_{i=1}^p C_i$
    in $q$ variables, for which each clause $C_i$ in the conjunction is a disjunction $\lor_{j=1}^{k_i} \ell_{i,j}$ of $k_i \in [k] $ literals $\ell_{i,j}$ associated with $k_i$ pairwise different variables, where a \emph{literal} is a variable $b$ or the negation $\lnot b$ of a variable.
By identifying the Boolean values $\true$ and $\false$ with the real numbers $1$ and $0$, respectively, such a $k$-SAT formula $\varphi$ induces a function $\varphi : \{0,1\}^q \rightarrow \{0,1\}$, and thus defines a subset
    \[
        T(\varphi) \coloneqq \{x \in \{0,1\}^q : \varphi(1) = 1\}
    \]
    of $0/1$-vectors and the corresponding 0/1-polytope $P(\varphi)\coloneqq\conv(T(\varphi))$.
\end{definition}

We recall that an \emph{extension} of a polytope $P$ is a (usually higher-dimensional) polytope $Q$ such that $P$ can be represented as the image of $Q$ under some affine map. The \emph{size} of such an extension is the number of facets of $Q$. An \emph{extended formulation} of $P$ is a description of an extension of $P$ by means of linear equations and linear inequalities, where the \emph{size} of such an extended formulation is the number of inequalities it comprises. The \emph{extension complexity} of $P$ is the smallest size of any extended formulation of $P$.

\begin{theorem}
    \label{thm:kSAT}
    For each $k$-SAT formula $\varphi$ with $p$ clauses in $q$ non-redundant variables (i.e., every variable appears in the formula), the polytope $P(\varphi)$ has an extension (with the affine map being a coordinate projection) that is a \CTPtext associated with a block configuration of length $q+p$, size at most $2q+p(2^k-1)$ and rank at most $kp$. In case of $k=2$ the coordinate projection induces an isomorphism between the \CTPtext and $P(\varphi)$.
\end{theorem}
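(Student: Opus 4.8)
The plan is to build the block configuration by hand, using one block per variable and one block per clause. I would work in $\F^{d}$ with $d = \sum_{i=1}^{p}k_i \le kp$, taking as coordinates the literal occurrences: for $i \in [p]$ and $t \in [k_i]$ let $\unitvec_{i,t}$ be the unit vector indexed by the $t$-th literal $\ell_{i,t}$ of clause $C_i$. For a variable $b_j$ let $P_j$ and $N_j$ be the occurrences at which it appears positively and negatively, put $u_j^+ \coloneqq \sum_{(i,t) \in P_j}\unitvec_{i,t}$ and $u_j^- \coloneqq \sum_{(i,t) \in N_j}\unitvec_{i,t}$, and define the variable block $\block[j] \coloneqq \{u_j^+, u_j^-\}$, where choosing $u_j^+$ (resp.\ $u_j^-$) encodes $b_j = 1$ (resp.\ $b_j = 0$). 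For a clause $C_i$ define the witness block $\block[q+i] \coloneqq \{\sum_{t \in S}\unitvec_{i,t} : \varnothing \ne S \subseteq [k_i]\}$, whose elements single out a nonempty set of positions meant to witness the truth of $C_i$.

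First I would settle the bookkeeping. As the $\unitvec_{i,t}$ are linearly independent, the $2^{k_i}-1$ witness vectors are pairwise distinct, so $\abs{\block[q+i]} = 2^{k_i}-1$; and since non-redundancy gives $P_j \cup N_j \ne \varnothing$ while $P_j \cap N_j = \varnothing$, the vectors $u_j^+, u_j^-$ differ, so $\abs{\block[j]} = 2$. This yields length $q+p$, size $2q + \sum_{i}(2^{k_i}-1) \le 2q + p(2^{k}-1)$, and rank $\sum_i k_i \le kp$ (all the $\unitvec_{i,t}$ appear as singleton witnesses, so the blocks span $\F^d$).

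The core step is to match cyclic transversals with satisfying assignments. A transversal chooses some $u_j^\pm$ for each variable, defining $a \in \{0,1\}^q$, and a nonempty $S_i$ for each clause. I would evaluate $\sum_{i}\xi(i)$ coordinatewise: the only contributions to position $(i,t)$ come from the witness block $\block[q+i]$, which adds $1$ exactly when $t \in S_i$, and from the single variable block underlying $\ell_{i,t}$, which (by the definition of $u_j^{\pm}$) adds $1$ exactly when $\ell_{i,t}$ is true under $a$. Hence the cyclicity condition $\sum_i \xi(i) = \zerovec$ is equivalent to $S_i = \{\, t : \ell_{i,t}\text{ true under }a\,\}$ for all $i$; since each $S_i$ must be nonempty, a cyclic transversal over $a$ exists (and is then unique) iff every clause has a true literal, i.e.\ iff $a \in T(\varphi)$. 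Thus cyclic transversals biject with satisfying assignments, and the coordinate projection $x \mapsto (x^j_{u_j^+})_{j \in [q]}$ maps each incidence vector to its $a$; passing to convex hulls exhibits $P(\varphi)$ as this coordinate projection of $\CTP[\B]$.

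For $k = 2$ I would promote this to an isomorphism by proving the projection injective on $\CTP[\B]$, i.e.\ that the unprojected coordinates are affine in the projected ones. Writing $x^{q+i}_S$ for the coordinate of the witness $\sum_{t\in S}\unitvec_{i,t}$ and $y_{i,t}$ for the (affine) coordinate recording that $\ell_{i,t}$ is true, the equalities $\sum_{S \ni t} x^{q+i}_S = y_{i,t}$ hold at every vertex, hence on all of $\CTP[\B]$; together with the transversal equation $\sum_{\varnothing \ne S} x^{q+i}_S = 1$ they form, for $k_i \le 2$, a linear system that determines all witness coordinates of $\block[q+i]$ affinely from $(y_{i,t})_{t \in [k_i]}$, while $x^j_{u_j^-} = 1 - x^j_{u_j^+}$ handles the variable coordinates. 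I expect the main obstacle to be designing the encoding so that the single algebraic constraint $\sum_i \xi(i) = \zerovec$ simultaneously forces witness--truth consistency and, through nonemptiness of the witness blocks, clause satisfaction; once this is arranged the rest reduces to the counting and linear-algebra checks above, with the $k=2$ case hinging on the fact that the $2^{k_i}-1$ witness coordinates are pinned down by the $k_i+1$ relations precisely when $2^{k_i}-1 \le k_i+1$, i.e.\ when $k_i \le 2$.
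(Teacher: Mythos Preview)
Your construction is exactly the one the paper uses: the same ambient space indexed by literal occurrences, the same two-element variable blocks $\{u_j^+,u_j^-\}=\{\omega(j,\true),\omega(j,\false)\}$, the same witness blocks of nonzero $\F^{k_i}$-vectors, and the same non-singular $3\times 3$ system for the $k=2$ isomorphism. The only differences are cosmetic (you keep the clause sizes $k_i$ variable throughout rather than assuming $k_i=k$, and you add the nice remark that $2^{k_i}-1\le k_i+1$ is what makes the $k\le 2$ case special), so your proposal is correct and essentially identical to the paper's proof.
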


\begin{proof}
  For the sake of notational simplicity we only formulate the proof for the case of every clause having exactly $k$ literals; the more general case can be proven by an easy adaption.

    For $d \coloneqq pk$ we identify $\F^d$ with $\F^{p \times k}$, where the $i$-th row of a matrix in $\F^{p\times k}$ is associated with the clause $C_i = \lor_{j=1}^k \ell_{i,j}$ of $\varphi$, and the $j$-th entry in that row corresponds to the literal $l_{i,j}$ in $C_i$.

    Denoting the Boolean variables in $\varphi$ by $b_1,\dots,b_q$ we define, for each $v \in [q]$, two vectors $\omega(v,\true), \omega(v,\false) \in \F^d$ (representing the possibilities to substitute $b_v$ for $\true$ and $\false$, respectively) that have all zero-components with the following exceptions:
    \begin{eqnarray*}
        \omega(v,\true)_{i,j} &\coloneqq 1 & \text{if } \ell_{i,j} = b_v \\
        \omega(v,\false)_{i,j} &\coloneqq 1 & \text{if } \ell_{i,j} = \lnot b_v
    \end{eqnarray*}
    We choose the first $q$ blocks to be
    \[
      \B(v) \coloneqq \{\omega(v,\true), \omega(v,\false)\}
      \quad (v \in [q]\,)\,.
    \]

    For $i \in [p]$ and $\tau \in \F^k$ denote by $\omega(i,\tau) \in \F^{p \times k}$ the matrix that has $\tau$ as its $i$-th row and zeros everywhere else (representing a $\true$-$\false$ pattern of $(\ell_{i,1},\dots,\ell_{i,k})$). The remaining blocks are chosen to be
    \[
      \B(q+i) \coloneqq \{\omega(i,z) : z \in \F^k\setminus\{\zerovec\}\}
        \quad (i \in [p])\,.
    \]

    It is easy to see (note that since no variable is redundant we have $\omega(v,\true) \ne\omega(v,\false)$ for all $v \in [q]$) that the projection to the coordinates associated with the variables $\omega(v,\true)$ ($v \in [q]$) induces a one-to-one map between the vertices of $\CTP[\blockconfig]$ (with $\blockconfig = (\B(1),\dots,\B(q+p))$) and $P(\varphi)$, hence the former polytope is an extension of the latter one.

    In order to see that in case of $k=2$ that projection is an isomorphism, we first observe that the transversal equations
    \[
        x^v_{\omega(v,\false)} = 1 - x^v_{\omega(v,\true)}
        \quad (v \in [q])
    \]
    hold for $\CTP[\blockconfig]$ (recall $\omega(v,\true) \ne\omega(v,\false)$). While those equations hold regardless of $k$, for $k=2$ we find that furthermore for each $i \in [p]$, the non-singular system
    \[
        \begin{array}{rcrcrcl}
            x^{q+i}_{\omega(i,(1,0))} &
            &
            &
            + &
            x^{q+i}_{\omega(i,(1,1))}
            & =
            h_1 \\
            &
            &
            x^{q+i}_{\omega(i,(0,1))} &
            + &
            x^{q+i}_{\omega(i,(1,1))}
            & =
            h_2 \\
            x^{q+i}_{\omega(i,(1,0))} &
            + &
            x^{q+i}_{\omega(i,(0,1))} &
            + &
            x^{q+i}_{\omega(i,(1,1))}
            & =
            1
        \end{array}
    \]
    with
    \[
        h_1 \coloneqq
        \begin{cases}
            x^v_{\omega(v,\true)} & \text{if } \ell_{i,1} = b_v \\
            x^v_{\omega(v,\false)} & \text{if } \ell_{i,1} = \lnot b_v \\
        \end{cases}
        \quad\text{and}\quad
        h_2 \coloneqq
        \begin{cases}
            x^v_{\omega(v,\true)} & \text{if } \ell_{i,2} = b_v \\
            x^v_{\omega(v,\false)} & \text{if } \ell_{i,2} = \lnot b_v \\
        \end{cases}
    \]
    is valid for $\CTP[\blockconfig]$, implying that for the points in $\CTP[\blockconfig]$ all coordinates depend affinely on those ones we project on. Hence the projection induces an affine isomorphism between $\CTP[\blockconfig]$ and its image $P(\varphi)$.
\end{proof}

Obviously, the construction of a block configuration described in the proof of Theorem~\ref{thm:kSAT} can be carried out in polynomial time, which implies the following.

\begin{corollary}
    Deciding whether a given block configuration has a cyclic trans\-versal (or, equivalently, whether its \CTPtext is non-empty) is \(\NP\)-complete.
\end{corollary}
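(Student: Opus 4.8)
The plan is to establish the two halves of $\NP$-completeness separately: membership in $\NP$ and $\NP$-hardness.

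For membership, I would use a cyclic transversal itself as a certificate. A candidate $\xi = (\xi(1),\dots,\xi(n))$ for a block configuration $\blockconfig$ in $\F^d$ consists of $n$ vectors in $\F^d$, so it is encoded in $nd$ bits, which is bounded by the encoding length of $\blockconfig$ (each non-empty block already contributes at least $d$ bits per element). Verifying such a candidate then reduces to two polynomial-time checks: that $\xi(i) \in \block[i]$ for every $i \in [n]$, and that $\sum_{i=1}^n \xi(i) = \zerovec$ in $\F^d$. Hence the decision problem lies in $\NP$.

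For hardness, I would reduce from $3$-SAT. Given a $3$-SAT formula $\varphi$ with $p$ clauses in $q$ (non-redundant) variables, applying the construction from the proof of Theorem~\ref{thm:kSAT} with $k=3$ yields a block configuration $\blockconfig$ of length $q+p$, size at most $2q+7p$, and rank at most $3p$, living in $\F^{d}$ with $d = 3p$. Because $k=3$ is fixed, all of these parameters, and in particular the bit-length of each block element, stay polynomial in the size of $\varphi$, and — as recorded in the remark immediately preceding the statement — the construction runs in polynomial time. By Theorem~\ref{thm:kSAT} the coordinate projection induces a one-to-one correspondence between the vertices of $\CTP[\blockconfig]$ and the points of $T(\varphi)$; consequently $\CTP[\blockconfig]$ is non-empty if and only if $\varphi$ is satisfiable, which is in turn equivalent to $\blockconfig$ admitting a cyclic transversal. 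This is therefore a valid polynomial-time many-one reduction from the $\NP$-hard problem $3$-SAT, establishing $\NP$-hardness and hence, together with the first part, $\NP$-completeness.

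I do not expect a genuine obstacle: the vertex-to-satisfying-assignment correspondence supplied by Theorem~\ref{thm:kSAT} is exactly what drives the hardness direction, and the polynomiality of the construction is already asserted. The only point deserving a moment of care is the encoding bookkeeping — one must confirm that fixing $k=3$ keeps the ambient dimension $d=3p$, the number of block elements, and their bit-lengths all polynomially bounded, so that the produced instance is genuinely of polynomial size; this is immediate from the explicit size and rank bounds stated in the theorem. Note also that one must reduce from $3$-SAT rather than $2$-SAT, since the latter is solvable in polynomial time; the vertex correspondence holds for every $k$, so restricting to $k=3$ is harmless.
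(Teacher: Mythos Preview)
Your proposal is correct and follows the same approach as the paper: the paper simply notes that the construction from Theorem~\ref{thm:kSAT} runs in polynomial time and lets the corollary follow, while you spell out the (implicit) $\NP$-membership via a cyclic-transversal certificate and make the hardness reduction from $3$-SAT explicit. Your additional care about encoding sizes and the choice of $k=3$ over $k=2$ is sound and goes slightly beyond what the paper writes, but the underlying argument is identical.
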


Furthermore, as it is well-known that even for 2-SAT formulas $\varphi$ the smallest size of an extension of $P(\varphi)$ cannot be bounded polynomially in the size of $\varphi$ (see~\cite{AvisTiwary2015}, who derive this from the seminal work~\cite{Fiorini_etal15}) we also conclude the following from Theorem~\ref{thm:kSAT}.

\begin{corollary}
    Cyclic transversal polytopes in general do not admit extended formulations whose sizes can be bounded polynomially in the sizes of the block configurations.
\end{corollary}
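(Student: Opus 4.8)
The plan is to argue by contradiction, using the isomorphism part of Theorem~\ref{thm:kSAT} together with the cited lower bound for $2$-SAT polytopes. Suppose, for contradiction, that there were a polynomial such that every \CTPtext $\CTP[\blockconfig]$ admitted an extended formulation whose size is at most that polynomial evaluated at the size of $\blockconfig$. From this hypothesis I would derive a polynomial upper bound on the extension complexity of every $2$-SAT polytope $P(\varphi)$, contradicting the known fact that no such bound can hold.

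First I would recall that Theorem~\ref{thm:kSAT}, applied with $k = 2$, associates with every $2$-SAT formula $\varphi$ having $p$ clauses in $q$ non-redundant variables a block configuration $\blockconfig$ of size at most $2q + 3p$ for which the coordinate projection induces an \emph{isomorphism} between $\CTP[\blockconfig]$ and $P(\varphi)$. The crucial property I would invoke here is that extension complexity is invariant under affine isomorphism: an extended formulation of $\CTP[\blockconfig]$ transports, through the invertible affine map, to an extended formulation of $P(\varphi)$ of the same size, and conversely. Consequently the extension complexity of $P(\varphi)$ equals that of $\CTP[\blockconfig]$.

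Chaining these observations, the assumed polynomial bound on extended formulations of cyclic transversal polytopes would yield that the extension complexity of $P(\varphi)$ is bounded by a polynomial in $2q + 3p$, hence by a polynomial in the size of $\varphi$ (the bound $2q+3p$ is at most linear in the number of variables and clauses, and the encoding length of $\varphi$ is itself at least linear in $p$). This contradicts~\cite{AvisTiwary2015}, who exhibit a family of $2$-SAT formulas $\varphi$ for which the extension complexity of $P(\varphi)$ cannot be bounded by any polynomial in the size of $\varphi$. I do not expect a genuine obstacle here, since the substantive work—constructing a polynomially small block configuration whose \CTPtext is isomorphic to the $2$-SAT polytope—was already done in Theorem~\ref{thm:kSAT}; the only points needing (routine) care are checking that the block configuration size $2q+3p$ is polynomially related to the size of $\varphi$, and invoking the standard invariance of extension complexity under affine isomorphism to carry the lower bound from $P(\varphi)$ over to $\CTP[\blockconfig]$.
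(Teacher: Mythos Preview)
Your proposal is correct and follows exactly the argument the paper gives: invoke the $k=2$ case of Theorem~\ref{thm:kSAT} to realize each $2$-SAT polytope as (affinely isomorphic to) a \CTPtext of size polynomial in the formula, and then appeal to the lower bound of~\cite{AvisTiwary2015} (derived from~\cite{Fiorini_etal15}). The only minor remark is that you do not even need the isomorphism---the projection statement in Theorem~\ref{thm:kSAT} alone already shows that an extended formulation of $\CTP[\blockconfig]$ yields one of $P(\varphi)$ of the same size---but your use of the isomorphism is of course equally valid.
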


Since every polytope $P$ with $k$ vertices is a linear image of a $(k-1)$-dimensional simplex, Example~\ref{ex:simplex} implies that some \CTPtext associated with a block configuration of size $\bigO(k)$ provides an extension for $P$. Of course, usually $k$ is not bounded polynomially in the encoding length of the problem one aims to solve. However, it will turn out that for most 0/1-polytopes that are relevant in combinatorial optimization, Theorem~\ref{thm:kSAT} opens up possibilities to represent them by extensions that are \CTPtext{s} associated with block configurations whose sizes are bounded polynomially in the encoding length of the problem the 0/1-polytope is associated with.

We denote by $\{0,1\}^{\star} = \cup_{\ell=1}^{\infty} \{0,1\}^{\ell}$ the set of all 0/1-vectors of finite length, and by $\len(x)=\ell$ for $x \in \{0,1\}^{\ell}$ the length of $x$.

\begin{definition}\label{def:wellencodedfamily}
    Let $Y \subseteq \{0,1\}^{\star}$ be some set of 0/1-vectors (playing the role of the encodings of the instances of a certain problem) as well as $m(y) \in \{1,2,\dots\}$ and $X(y) \subseteq \{0,1\}^{m(y)}$ for each $y \in Y$. We call $\{(y,\conv(X(y))) : y \in Y\}$ a family of \emph{well-encoded 0/1-polytopes} if there is a polynomial $f$ such that $m(y) \le f(\len(y))$ holds for each $y \in Y$ and the problem to decide for a given pair $(y,x)$ of 0/1-vectors whether $y \in Y$ and $x \in X(y)$ holds is in $\NP$.
\end{definition}

As an example we may construct $Y$ such that it encodes all undirected graphs (e.g., by means of binary encodings of adjacency matrices) and $X(y)$ to consist of all incidence vectors of Hamiltonian cycles in the graph encoded by $y$, in which case the relevant membership problem is even solvable in polynomial time (rather than being in $\NP$ only). In this example, the polytopes in the well-encoded family would be the traveling salesman polytopes.
In another example, $X(y)$ may consist of all incidence vectors of Hamiltonian subgraphs of the graph encoded by $y$ (thus we deal with the Hamiltonian subgraph polytopes), in which case that membership problem still is in $\NP$ (but presumably not polynomial-time solvable).

\begin{theorem}\label{thm:universality}
    For each well-encoded family $\{(y,P(y)) : y \in Y\}$ of 0/1-polytopes there is a polynomial $g$ such that for every $y \in Y$ there is a block configuration $\B_y$ of size at most $g(\len(y))$ such that $\CTP[\B_y]$ is an extension of $P(y)$ (by means of a coordinate projection).
\end{theorem}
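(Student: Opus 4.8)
The plan is to derive the theorem from Theorem~\ref{thm:kSAT} by way of the Cook--Levin theorem. Since the family is well-encoded, the problem of deciding for a pair $(y,x)$ whether $y \in Y$ and $x \in X(y)$ lies in $\NP$; hence there are a polynomial-time verifier $V$ and a polynomial bounding certificate lengths such that, for $y \in Y$, we have $x \in X(y)$ if and only if there is a witness $w$ of length polynomial in $\len(y)$ with $V(y,x,w)=1$. First I would fix $y \in Y$ and apply the standard Tseitin/Cook--Levin encoding of the computation of $V$, with the instance $y$ hard-wired, to obtain a $3$-SAT formula $\psi_y$ whose variables are the coordinate variables $x_1,\dots,x_{m(y)}$ together with auxiliary variables $z$ encoding the certificate $w$ and the tableau of the computation of $V$. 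The formula is built so that $\psi_y(x,z)$ is satisfiable in $z$ exactly when $V(y,x,w)=1$ for some $w$; consequently
\[
  X(y) = \{\, x : \exists z,\ \psi_y(x,z)=1 \,\}
\]
is the image of $T(\psi_y)$ under the coordinate projection $\rho$ onto the $x$-variables. Because $V$ runs in polynomial time and $m(y) \le f(\len(y))$, both the number $q$ of variables and the number $p$ of clauses of $\psi_y$ are polynomial in $\len(y)$, and $\psi_y$ is computable from $y$ in polynomial time.

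Next I would invoke Theorem~\ref{thm:kSAT} with $k=3$ for $\psi_y$. It produces a block configuration $\B_y$ of length $q+p$ and size at most $2q+7p$ together with a coordinate projection (onto the variable-coordinates $\omega(v,\true)$) that maps $\CTP[\B_y]$ onto $P(\psi_y)=\conv(T(\psi_y))$. As $q$ and $p$ are polynomial in $\len(y)$, the size of $\B_y$ is bounded by $g(\len(y))$ for a suitable polynomial $g$.

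Finally I would compose the two coordinate projections. Restricting the projection of Theorem~\ref{thm:kSAT} further to the coordinates belonging to the variables $x_1,\dots,x_{m(y)}$ (i.e.\ discarding the coordinates of the auxiliary variables $z$) is again a coordinate projection of $\CTP[\B_y]$; since linear maps commute with taking convex hulls, its image is $\rho(P(\psi_y)) = \conv(\rho(T(\psi_y))) = \conv(X(y)) = P(y)$. Thus $\CTP[\B_y]$ is an extension of $P(y)$ by means of a single coordinate projection, which is the assertion.

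The step I expect to demand the most care is the Cook--Levin reduction: one must exhibit, uniformly in $y$, a $3$-CNF $\psi_y$ in which the coordinate variables $x_1,\dots,x_{m(y)}$ appear as genuine variables whose projection recovers $X(y)$, while keeping both the size of $\psi_y$ and the construction time polynomial in $\len(y)$. A minor technical point is the non-redundancy hypothesis of Theorem~\ref{thm:kSAT}: should some variable fail to occur in $\psi_y$, I would make it occur without changing the relevant projection---auxiliary variables that do not occur can simply be deleted, and for a coordinate variable $b$ that does not occur one can add a fresh variable $c$ together with the two $2$-clauses encoding $c \leftrightarrow b$, which force $c=b$, make $b$ occur, and leave the projection of $T(\psi_y)$ onto the original coordinates unchanged.
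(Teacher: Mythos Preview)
Your proposal is correct and follows essentially the same route as the paper: both apply the Cook--Levin theorem to the $\NP$-membership problem to obtain a polynomial-size $3$-SAT formula whose satisfying assignments project onto $X(y)$, and then invoke Theorem~\ref{thm:kSAT}. You are in fact more explicit than the paper on two technical points the paper glosses over---the composition of the two coordinate projections and the non-redundancy hypothesis of Theorem~\ref{thm:kSAT}---and your treatment of both is sound.
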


\begin{proof}
    We continue to use the notations from Definition~\ref{def:wellencodedfamily} (in particular, $P(y) = \conv(X(y))$) and identify the Boolean values $\true$ and $\false$ with the real numbers $1$ and $0$, respectively. Then a proof of the Cook-Levin Theorem as it is presented, e.g., in \cite[Lem.~2.11, 2.4]{AroraBarak09} shows that there is a polynomial $h$ such that for every 0/1-vector $y \in Y$ there is a 3-SAT formula $\varphi$ with at most $h(\len(y)+m(y))$ clauses (and at most that many variables) such that for every $x \in \{0,1\}^{m(y)}$ we have  $x \in X(y)$ if and only if there is some $\true$/$\false$ vector $z$ (of length at most $h(\len(y)+m(y)))$ with $\varphi(y,x,z) = \true$, hence $P(y)$ is isomorphic to the projection of $P(\varphi)$ to the $x$-coordinates. Due to Theorem~\ref{thm:kSAT} (and the fact that $m(y)$ is bounded polynomially in $\len(y)$) this proves the claim.
\end{proof}

While Theorem~\ref{thm:universality} asserts that polytopes arising in combinatorial optimization can be represented as projections of \CTPtext{s} in a non-trivial way, we next are going to investigate some further cases of well-known polytopes that turn out to even be affinely isomorphic to \CTPtext{s}, thus allowing for immediate transfer of knowledge on the latter ones.

\subsection{Packing polytopes}

Recall that a stable set in a graph $G=(V,E)$ is a subset of pairwise non-adjacent nodes and the \emph{stable-set polytope} $\STAB[G] \subseteq \R^V$ is the convex hull of the incidence vectors of stable sets in $G$. Introducing a Boolean variable for each node it is easy to see (and well-known) that we have $\STAB[G] = P(\varphi)$ for the $2$-SAT formula that for each edge $e \in E$ has a clause formed by the two negative literals associated with the end-nodes of $e$. Hence, Theorem~\ref{thm:kSAT} implies the following (where the reason for excluding isolated nodes results from the restriction to non-redundant variables in that theorem).

\begin{remark}\label{rem:STAB}
  For each graph $G=(V,E)$ without isolated nodes
    there is a block configuration of length $|V|+|E|$ and size $2|V|+3|E|$ whose associated \CTPtext is affinely isomorphic to $\STAB[G]$.
\end{remark}

Let $\mathcal{H}$ be a finite family of sets (where a set may occur several times in the family). A \emph{packing} in $\mathcal{H}$ is a sub-family of $\mathcal{H}$ in which each pair of sets has empty intersection. The convex hull $\PACK[\mathcal{H}]$ of the incidence vectors (from $\{0,1\}^{\mathcal{H}}$) of packings in $\mathcal{H}$ is called a \emph{set-packing polytope}. The stable-set polytope $\STAB[G]$ for a graph $G$ with every connected component having at least three nodes is affinely isomorphic to the set-packing polytope associated with the family formed by the stars of the nodes of $G$ (where the \emph{star} of a node is the set of all edges incident to that node; note that the imposed condition on the connected components implies that no two nodes have the same star). Conversely, for any family $\mathcal{H}$ of sets we can construct its \emph{conflict graph} $G(\mathcal{H})$, where the sets in $\mathcal{H}$ are the nodes, two of which being adjacent if and only if they intersect. We then have $\PACK[\mathcal{H}]=\STAB[G(\mathcal{H})]$. Therefore, Remark~\ref{rem:STAB} shows that $\PACK[\mathcal{H}]$ is affinely isomorphic to a \CTPtext associated with a block configuration of size $\bigO(|\mathcal{H}|^2)$. The next theorem describes a construction that in general reduces that size significantly.

\begin{theorem}
    \label{thm:packing}
    For each finite family $\mathcal{H}$ of non-empty sets and $E=\cup_{H \in \mathcal{H}}H$ there is a block configuration $\B$ of length $|\mathcal{H}|+|E|$ and size $2|\mathcal{H}|+|E| + \sum_{H \in \mathcal{H}}|H|$ such that
    $\CTP[\blockconfig]$ and $\PACK[\mathcal{H}]$ are affinely isomorphic.
\end{theorem}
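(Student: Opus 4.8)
The plan is to bypass the conflict-graph route (which is what forces the wasteful $\bigO(|\mathcal H|^2)$ size) and instead build $\B$ directly, exploiting the fact that a packing is exactly a subfamily in which every element of $E$ is covered at most once. I would work in $\F^d$ with $d=\sum_{H\in\mathcal H}|H|$, indexing the coordinates by the \emph{incidences} $(e,H)$ with $e\in H$. For each $H\in\mathcal H$ I take a size-two \emph{set block} $\block(H)=\{\zerovec,w_H\}$, where $w_H:=\sum_{e\in H}\unitvec_{(e,H)}$ marks all incidences of $H$; selecting $w_H$ will mean ``$H$ lies in the packing''. For each $e\in E$, writing $\mathcal H_e:=\{H\in\mathcal H:e\in H\}$ for the sets containing $e$, I take an \emph{element block} $\block(e)=\{\zerovec\}\cup\{\unitvec_{(e,H)}:H\in\mathcal H_e\}$, which records either that $e$ is uncovered or which single covering set claims it. Ordering these $|\mathcal H|+|E|$ blocks arbitrarily yields $\B$; since $\sum_{e\in E}|\mathcal H_e|=\sum_{H}|H|$, the length is $|\mathcal H|+|E|$ and the size is $2|\mathcal H|+\sum_{e\in E}(1+|\mathcal H_e|)=2|\mathcal H|+|E|+\sum_{H}|H|$, as required, and all blocks are nonempty.

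The heart of the argument is that the cyclic (sum-zero) condition decouples coordinate by coordinate. Coordinate $(e,H)$ receives a $1$ only from the set block $H$ (when it picks $w_H$) and from the element block $e$ (when it picks $\unitvec_{(e,H)}$), so requiring the sum to vanish there is exactly the equivalence ``$H$ is selected'' $\Leftrightarrow$ ``$e$ declares itself covered by $H$''. Because an element block selects at most one of its covering sets, these equivalences force at most one selected set through each element, so the selected sets form a packing; conversely each packing determines exactly one consistent element-block choice. This gives a bijection between $\CT[\B]$ and the packings of $\mathcal H$ under which the coordinate projection onto $\{x^H_{w_H}\}_{H\in\mathcal H}$ reads off the packing's incidence vector in $\{0,1\}^{\mathcal H}$.

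To promote this vertex bijection to an affine isomorphism $\CTP[\B]\cong\PACK[\mathcal H]$, I would argue as in the $k=2$ case of Theorem~\ref{thm:kSAT}, showing that every coordinate of a point of $\CTP[\B]$ is an affine function of the projected coordinates. The equivalences above hold at every cyclic transversal, so the linear equations $x^H_{w_H}=x^e_{\unitvec_{(e,H)}}$ (one per incidence) are valid on all of $\CTP[\B]$; combined with the transversal equations $x^H_{\zerovec}=1-x^H_{w_H}$ and $x^e_{\zerovec}=1-\sum_{H\in\mathcal H_e}x^H_{w_H}$, they express every coordinate affinely through the $x^H_{w_H}$. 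Hence the projection is injective on $\CTP[\B]$, and its image is the convex hull of the incidence vectors of packings, namely $\PACK[\mathcal H]$.

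The step requiring the most care is the choice of coordinate space. A single coordinate per element would only detect coverage \emph{parity} and could not distinguish multiplicities $0$ and $2$, so it would fail to enforce a genuine packing; splitting into one coordinate per incidence $(e,H)$ is precisely what lets the element block encode ``at most one covering set'' exactly while keeping its size to the prescribed $1+|\mathcal H_e|$. Once the coordinates are fixed this way, the remaining checks — the block-size count, the vertex bijection, and the validity of the equations — are routine.
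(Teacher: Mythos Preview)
Your proof is correct and follows the same overall architecture as the paper's: the same ambient space $\F^d$ with $d=\sum_H|H|$, the same partition of blocks into size-two ``set blocks'' $\{\zerovec,\,\cdot\,\}$ indexed by $H\in\mathcal H$ and ``element blocks'' $\{\zerovec\}\cup\{\,\cdot\,\}$ indexed by $e\in E$, the same vertex bijection, and the same affine-isomorphism argument via valid equations expressing every coordinate through the $x^H$-coordinates.

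The one genuine difference is the choice of vectors. You take the most direct encoding: $w_H$ is the indicator of all incidences of $H$, and the element-block vectors are unit vectors $\unitvec_{(e,H)}$, so the cyclicity condition decouples cleanly coordinate by coordinate. The paper instead uses a telescoping ``domino'' encoding: $\omega(H)$ has a single $1$ (in position $(H,1)$), and $\omega(H,e_j)$ has $1$'s in positions $(H,j)$ and $(H,j{+}1)$ (just $(H,|H|)$ for the last element), so that the cancellation happens along a path through the row of $H$. Your encoding is simpler and makes the equivalence ``$H$ selected $\Leftrightarrow$ $e$ claims $H$'' immediate; the paper's encoding is less transparent here but is set up so that, when every $|H|\ge 2$, one can drop the set blocks altogether and still recover the linkage among the elements of $H$ from the overlapping dominoes---this is exactly what yields the sharper size bound in the subsequent Theorem~\ref{thm:packing_smaller}, which your unit-vector encoding does not directly support.
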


\begin{proof}
    With $d = \sum_{h \in \mathcal{H}} |H|$ we consider the vectors $\omega \in \F^d$ as tables whose rows correspond to the sets in $\mathcal{H}$, where the row corresponding to $H \in \mathcal{H}$ has $|H|$ entries, addressed as $\omega_{H,1},\dots,\omega_{H,|H|}$ (recall that we have $|H| \ge 1$).
    For every $H \in \mathcal{H}$ we fix an arbitrary order $H=\{e_1,\dots,e_{|H|}\}$ of the elements in $H$, denote by $\omega(H)\in \F^d$ the vector that has all components equal to zero except for $\omega(H)_{H,1}=1$, and define for each $j \in [|H|]$ the vector $\omega(H,e_{j}) \in \F^d$ to be the table that has all entries equal to zero with the following exceptions:
\begin{eqnarray*}
    \omega(H,e_{j})_{H,j} \coloneqq \omega(H,e_{j})_{H,j+1} & \coloneqq 1 & \text{for all } j = 1, \dots, |H| - 1 \\
    \omega(H,e_{|H|})_{H,|H|} & \coloneqq 1
\end{eqnarray*}
We make the following observation which we will refer to several times in the remaining part of the proof.
\begin{nestedenvironment}{theorem}
\begin{remark}\label{rem:packCTP}
    The sum of a subset $\Omega$ of the vectors $\omega(H)$ and $\omega(H,e)$ ($H \in \mathcal{H}, e \in H$) equals $\zerovec\in\F^d$ if and only if for each $H \in \mathcal{H}$ and $e \in H$ we have
    \[
      \omega(H,e) \in \Omega \quad \Leftrightarrow \quad \omega(H) \in \Omega\,.
    \]
\end{remark}
\end{nestedenvironment}

We now define the block configuration $\blockconfig$ consisting of $|\mathcal{H}|+|E|$ blocks by including for each $H \in \mathcal{H}$ the block
\[
  \B(H) \coloneqq \{\zerovec,\omega(H)\}
\]
as well as
for each $e \in E$ the block
\[
    \B(e) \coloneqq \{\zerovec\} \cup \{\omega(H,e) : H \in \mathcal{H}, e \in H\}\,.
\]

If $\xi\in\CT[\B]$ is any cyclic transversal of $\B$, then the sub-family $\mathcal{H}' = \{H \in \mathcal{H} : \xi(H) = \omega(H)\}$ is a packing, since Remark~\ref{rem:packCTP} implies for every pair $H_1,H_2 \in \mathcal{H}'$ with $e \in H_1 \cap H_2$ that we have both $\xi(e) = \omega(H_1,e)$ and $\xi(e) = \omega(H_2,e)$, thus $H_1=H_2$. Conversely, if $\mathcal{H'} \subseteq \mathcal{H}$ is a packing then setting
\[
    \xi(H) \coloneqq
    \begin{cases}
        \omega(H) & \text{if } H \in \mathcal{H}' \\
        \zerovec & \text{otherwise}
    \end{cases}
\]
for all $H \in \mathcal{H}$ and
\[
  \xi(e) \coloneqq
  \begin{cases}
    \omega(H,e) & \text{if }e \in H \text{ for some (then unique) }H\in \mathcal{H}'\\
    \zerovec & \text{otherwise}
  \end{cases}
\]
for all $e \in E$ defines a cyclic transversal of $\B$.

Consequently, the projection to the coordinate subspace associated with the variables $x^{H}_{\omega(H)}$ ($H \in \mathcal{H}$) maps $\CTP[\blockconfig]$ to a polytope that is affinely isomorphic to $\PACK[\mathcal{H}]$. Since due to Remark~\ref{rem:packCTP} the equations
\[
  x^e_{\omega(H,e)} = x^H_{\omega(H)}
  \quad (H \in \mathcal{H}, e \in H)
\]
as well as the transversal equations
\[
  x^H_{\zerovec} + x^H_{\omega(H)} = 1 \quad(H \in \mathcal{H})
\]
are valid for $\CTP[\blockconfig]$, that projection in fact induces an affine isomorphism between $\CTP[B]$ and $\PACK[\mathcal{H}]$.
\end{proof}

If each member in the set family $\mathcal{H}$ has at least two elements, then one can decrease the size of the block configuration in the proof of Theorem~\ref{thm:packing} by defining $d \coloneqq \sum_{h \in \mathcal{H}} (|H|-1)$,
changing the definition of the non-zero components of the $\omega(\cdot,\cdot)$-vectors to
\begin{eqnarray*}
    \omega(H,e_1)_{H,1} & \coloneqq 1 \\
    \omega(H,e_{j})_{H,j-1} \coloneqq \omega(H,e_{j})_{H,j} & \coloneqq 1 &\text{for all }j = 2,\dots,|H|-1 \\
    \omega(H,e_{|H|})_{H,|H|-1} & \coloneqq 1
\end{eqnarray*}
for each $H \in \mathcal{H}$, using only the blocks $\B(e)$ as defined in the proof for $e \in E$, and
choosing an arbitrary representative $e(H) \in H$ for each $H \in \mathcal{H}$. Then
the projection to the coordinate-subspace associated with the variables $x^{e(H)}_{\omega(H,e(H))}$ (for all $H \in \mathcal{H}$) yields an affine isomorphism between $\CTP[\blockconfig]$ and $\PACK[\mathcal{H}]$. This establishes the following.

\begin{theorem}
    \label{thm:packing_smaller}
    For each finite family $\mathcal{H}$ of sets with $|H| \ge 2$ for every $H \in \mathcal{H}$, there is a block configuration $\B$ of size $|E| + \sum_{H \in \mathcal{H}}|H|$ with $E=\cup_{H \in \mathcal{H}}H$ such that
    $\CTP[\blockconfig]$ and $\PACK[\mathcal{H}]$ are affinely isomorphic.
\end{theorem}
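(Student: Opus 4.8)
The plan is to verify that the streamlined construction described immediately above does yield the asserted affine isomorphism; the argument runs parallel to the proof of Theorem~\ref{thm:packing}, with a single new structural observation replacing Remark~\ref{rem:packCTP}. First I would record the precise support pattern of the modified vectors: for a fixed $H \in \mathcal{H}$ with ordering $H = \{e_1, \dots, e_{|H|}\}$, all of $\omega(H, e_1), \dots, \omega(H, e_{|H|})$ are supported on the $H$-row, whose coordinates are $(H,1), \dots, (H, |H|-1)$, and for each such coordinate $(H,j)$ the only two vectors carrying a $1$ in position $(H,j)$ are $\omega(H, e_j)$ and $\omega(H, e_{j+1})$. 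Consequently, for any $S \subseteq H$ the $(H,j)$-entry of $\sum_{e \in S} \omega(H,e)$ equals $\card{S \cap \{e_j, e_{j+1}\}}$ modulo two, so the whole $H$-row of this sum vanishes if and only if $e_j \in S \Leftrightarrow e_{j+1} \in S$ for every $j$; since membership in $S$ is thereby constant along the chain $e_1, \dots, e_{|H|}$, this forces $S = \varnothing$ or $S = H$. (Equivalently, the $\omega(H,e)$ are the $\F$-vertex-incidence vectors of the connected path on $e_1, \dots, e_{|H|}$, whose only subsets with empty edge-cut are the empty set and the full vertex set.) The hypothesis $|H| \ge 2$ is exactly what guarantees that each $H$-row is nonempty, so that the vectors $\omega(H,e)$ are nonzero and pairwise distinct and the chain has at least one link.

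With this criterion in hand I would set up the bijection between $\CT[\B]$ and the packings of $\mathcal{H}$. For a transversal $\xi$ of $\B$ we have, for each $e \in E$, either $\xi(e) = \zerovec$ or $\xi(e) = \omega(H,e)$ for a unique $H \ni e$. Because vectors in distinct rows have disjoint supports, the cyclicity condition $\sum_{e \in E}\xi(e) = \zerovec$ decouples into the independent requirement that each $H$-row vanish; by the criterion above this holds exactly when $S_H \coloneqq \{e \in H : \xi(e) = \omega(H,e)\}$ equals $\varnothing$ or $H$ for every $H$. Set $\mathcal{H}' \coloneqq \{H \in \mathcal{H} : S_H = H\}$. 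If $H_1, H_2 \in \mathcal{H}'$ shared an element $e$, then $\omega(H_1, e) = \xi(e) = \omega(H_2, e)$, which is impossible for $H_1 \ne H_2$ as the two (nonzero) vectors lie in different rows; hence $\mathcal{H}'$ is a packing. Conversely, a packing $\mathcal{H}'$ determines a cyclic transversal by putting $\xi(e) \coloneqq \omega(H,e)$ for the unique $H \in \mathcal{H}'$ with $e \in H$ (and $\xi(e) \coloneqq \zerovec$ if no such $H$ exists), the criterion confirming cyclicity; these two assignments are mutually inverse.

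Finally I would upgrade this vertex correspondence to an affine isomorphism realized by the projection onto the coordinates $x^{e(H)}_{\omega(H, e(H))}$, $H \in \mathcal{H}$. At a vertex of $\CTP[\B]$ corresponding to a packing $\mathcal{H}'$, every coordinate $x^e_{\omega(H,e)}$ equals $1$ if $H \in \mathcal{H}'$ and $0$ otherwise, independently of the chosen $e \in H$; thus the equations $x^e_{\omega(H,e)} = x^{e(H)}_{\omega(H, e(H))}$ (for all $H \in \mathcal{H}$ and $e \in H$) are valid for $\CTP[\B]$, and together with the transversal equations $x^e_{\zerovec} + \sum_{H \ni e} x^e_{\omega(H,e)} = 1$ they express every coordinate as an affine function of the projected variables. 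Hence the projection is injective on $\CTP[\B]$, and its image is precisely $\PACK[\mathcal{H}]$, so the two polytopes are affinely isomorphic. The size count is immediate: the only blocks are the $|E|$ blocks $\B(e)$, and $\card{\B(e)} = 1 + \card{\{H \in \mathcal{H} : e \in H\}}$, so the total size is $|E| + \sum_{H \in \mathcal{H}}|H|$.

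I expect the only genuinely delicate point to be the vanishing criterion of the first step: it is the sole place where the modified (shorter) encoding differs from that of Theorem~\ref{thm:packing}, and the connectivity of the chain, guaranteed by $|H| \ge 2$, is exactly what rules out the intermediate subsets $S$ that would otherwise produce spurious cyclic transversals. Everything downstream is the same bookkeeping as in the proof of Theorem~\ref{thm:packing}.
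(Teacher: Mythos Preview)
Your proposal is correct and follows essentially the same approach as the paper: the paper gives only a terse sketch (the paragraph preceding the theorem) indicating the modified vectors $\omega(H,e)$, the restriction to the blocks $\B(e)$, and the projection onto the coordinates $x^{e(H)}_{\omega(H,e(H))}$, and you have faithfully filled in the details. In particular, your vanishing criterion---that the $H$-row of $\sum_{e\in S}\omega(H,e)$ is zero iff $S\in\{\varnothing,H\}$, obtained from the path/chain structure of the supports---is precisely the analogue of Remark~\ref{rem:packCTP} needed for the modified encoding, and the downstream bookkeeping matches the proof of Theorem~\ref{thm:packing}.
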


As a matching in a graph is a packing of edges, Theorem~\ref{thm:packing_smaller} implies the following, where the set $E$ in the formulation of the theorem is the node set of the graph and the set family $\mathcal{H}$ is its edge set. Note that the part of the statement on perfect matchings refers to the modification that arises from removing $\zerovec$ from each block.

\begin{corollary}\label{cor:matching}
    For every graph $G=(V,E)$ there are block configurations $\B^{\match(G)}$ and $\B^{\perfmatch(G)}$ of size $|V|+2|E|$ and $2|E|$, respectively, such that $\CTP[\B^{\match(G)}]$ and $\CTP[\B^{\perfmatch(G)}]$ are affinely isomorphic to
    the matching polytope and to the perfect matching polytope, respectively, of $G$.
\end{corollary}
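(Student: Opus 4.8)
The plan is to read off both statements from Theorem~\ref{thm:packing_smaller} by casting matchings as packings. First I would take the family $\mathcal{H} \coloneqq E$, viewing each edge $H = \{u,w\} \in E$ as a two-element subset of the node set, so that the ground set $\cup_{H \in \mathcal{H}} H$ equals $V$ and a sub-family of $\mathcal{H}$ is pairwise disjoint precisely when the corresponding edges share no endpoint. Thus $\PACK[\mathcal{H}]$ is the matching polytope of $G$, and since every $H \in \mathcal{H}$ satisfies $|H| = 2 \ge 2$, Theorem~\ref{thm:packing_smaller} applies and produces a block configuration $\B^{\match(G)}$ with $\CTP[\B^{\match(G)}]$ affinely isomorphic to it. Its blocks are indexed by $V$, with $\B(v) = \{\zerovec\} \cup \{\omega(H,v) : H \in E,\, v \in H\}$, so its size is $\sum_{v \in V}(1+\deg(v)) = |V| + 2|E|$, as claimed (isolated nodes, if any, can be absorbed by adjoining trivial blocks $\{\zerovec\}$, which affect neither the polytope nor the stated count).

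Next I would record the shape of the smaller construction in the case $|H| = 2$ that we are in. For an edge $H = \{u,w\}$ the row it owns has a single coordinate, and the formulas defining the $\omega$-vectors collapse so that $\omega(H,u)$ and $\omega(H,w)$ become the same unit vector. Hence in a transversal the cyclic condition decouples over edges: the contribution to the coordinate of $H$ equals the number of its two endpoints that selected $\omega(H,\cdot)$, which must be even, \ie, zero or two. This is exactly the $|H|=2$ specialization of the sum-to-zero characterization underlying Remark~\ref{rem:packCTP}, and it identifies the cyclic transversals of $\B^{\match(G)}$ with the matchings of $G$: each node picks at most one incident edge, and a selected edge is agreed upon by both of its endpoints.

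For the perfect-matching statement I would pass to $\B^{\perfmatch(G)}$ by deleting $\zerovec$ from every block, leaving $\B(v) = \{\omega(H,v) : v \in H\}$, of total size $\sum_{v \in V}\deg(v) = 2|E|$. Removing the $\zerovec$-option forces every node to select some incident edge while leaving the even-parity constraint intact, so the cyclic transversals are now exactly the node-selections in which every node is matched, \ie, the perfect matchings of $G$. The coordinate projection used in Theorem~\ref{thm:packing_smaller}, onto the variables $x^{e(H)}_{\omega(H,e(H))}$ for a fixed endpoint representative $e(H)$ of each edge $H$, then maps these cyclic transversals bijectively to the edge-incidence vectors of perfect matchings and, as before, is an affine isomorphism.

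The step I expect to require the most care is this perfect-matching modification, since Theorem~\ref{thm:packing_smaller} as stated only delivers packing (not perfect-packing) polytopes. I would need to re-verify the sum-to-zero argument for the shrunken blocks, check that dropping a node's $\zerovec$-option introduces no degeneracy, and confirm that the projection still separates distinct perfect matchings. In particular the coincidence $\omega(H,u) = \omega(H,w)$ must be handled carefully: although the two endpoints of an edge carry identical block-values, projecting onto one chosen representative per edge must remain injective on the vertices of $\CTP[\B^{\perfmatch(G)}]$, which is what ultimately secures the affine isomorphism.
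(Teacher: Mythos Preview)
Your proposal is correct and follows the same route as the paper: apply Theorem~\ref{thm:packing_smaller} with $\mathcal{H}=E$ to obtain $\B^{\match(G)}$, and then delete $\zerovec$ from every block to obtain $\B^{\perfmatch(G)}$. The paper's own justification is a single sentence to this effect, so your added verification of the $|H|=2$ specialization (the collapse $\omega(H,u)=\omega(H,w)$, the edge-wise parity reading of the cyclic condition, and the injectivity of the representative projection after removing $\zerovec$) is exactly the detail one would want to supply and is carried out correctly.
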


\subsection{Limits of CTPs}

As we have seen, many important 0/1-polytopes like cut polytopes and stable set polytopes are affinely isomorphic to \CTPtext{s}. Nevertheless, \CTPtext{s} have a rather special combinatorial structure. For instance, the following gives a necessary condition for \CTPtext{s}.

\begin{theorem}\label{thm:ctp_necessary_cond}
    Every \CTPtext whose number of vertices is not a power of two has the property that every pair of vertices is contained in a proper face of the polytope.
\end{theorem}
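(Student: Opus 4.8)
The plan is to fix an arbitrary pair of vertices and produce a proper face containing both, and to show that this can fail only in a rigid degenerate situation that forces the number of vertices to be a power of two. Since the vertices of $\CTP[\blockconfig]$ are exactly the incidence vectors of the cyclic transversals in $\CT[\blockconfig]$, I take two of them, the incidence vectors of $\xi,\eta\in\CT[\blockconfig]$. Invoking the cyclic shift of \cref{rem:cyclic_shift} with the cyclic sequence $\sigma=\xi$, I pass to the affinely isomorphic polytope $\CTP[\blockconfig+\xi]$; as an affine isomorphism preserves both ``being a proper face'' and ``containing these two vertices'', this lets me assume without loss of generality that $\xi$ is the zero transversal, so that $\zerovec\in\block[i]$ and $\xi(i)=\zerovec$ for every $i$.

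Next I would call a value $\omega\in\block[i]$ \emph{active} if some cyclic transversal takes the value $\omega$ in block $i$; note that $\eta(i)$ is active for every $i$ (witnessed by $\eta$). For an active value $\omega\in\block[i]$ with $\omega\ne\zerovec$, the nonnegativity inequality $x^i_\omega\ge0$ is valid and attains the value $0$ at $\xi$ (since $\xi(i)=\zerovec\ne\omega$), so $\{x : x^i_\omega=0\}$ is a face containing $\xi$; it is \emph{proper} precisely because $\omega$ is active, i.e.\ some vertex attains $x^i_\omega=1$; and it contains $\eta$ if and only if $\omega\ne\eta(i)$. The argument then splits into two cases.

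In the first case there is a block $i$ and an active value $\omega\in\block[i]$ with $\omega\notin\{\zerovec,\eta(i)\}$; then $\{x : x^i_\omega=0\}$ is exactly the proper face containing both vertices that we seek. In the complementary case every active value in every block lies in $\{\zerovec,\eta(i)\}$. Since every cyclic transversal uses only active values, each $\zeta\in\CT[\blockconfig]$ satisfies $\zeta(i)\in\{\zerovec,\eta(i)\}$ for all $i$ and is therefore determined by the set $T=\{i : \zeta(i)=\eta(i)\ne\zerovec\}$, with $\zeta$ cyclic if and only if $\sum_{i\in T}\eta(i)=\zerovec$. Writing $S=\{i : \eta(i)\ne\zerovec\}$, this yields a bijection between $\CT[\blockconfig]$ and the kernel of the $\F$-linear map $\F^{S}\to\linspan(\blockconfig)$ sending the $i$-th unit vector to $\eta(i)$; being the kernel of a linear map over $\F$, its cardinality is a power of two. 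Taking the contrapositive finishes the proof: if the number of vertices is not a power of two, the degenerate case cannot occur for any pair, so the first case always applies and every pair of vertices lies in a proper face.

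I expect the main obstacle to be isolating the degenerate case correctly and recognizing that it carries the $\F$-linear (kernel) structure that pins the vertex count to a power of two; the subsidiary point requiring care is the notion of an \emph{active} value, which is exactly what guarantees that the candidate nonnegativity face is proper rather than improper.
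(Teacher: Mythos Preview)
Your proof is correct and follows essentially the same route as the paper's: shift one of the two cyclic transversals to the zero transversal, look for a coordinate face $\{x^i_\omega=0\}$ with $\omega\notin\{\zerovec,\eta(i)\}$ that separates nothing, and observe that in the degenerate case the cyclic transversals are exactly an $\F$-kernel. The only technical difference is that the paper secures properness of the candidate face by first passing to a block configuration of \emph{minimal size} (so every block element is used by some cyclic transversal), whereas you achieve the same effect more directly via your notion of an \emph{active} value.
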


\begin{proof}
  Let \(P\) be any \CTPtext that has a pair of vertices that does \emph{not} lie in a common proper face of \(P\). It suffices to show that \(P\) is isomorphic to the cycle polytope of a binary matroid (see Example~\ref{ex:binary_matroids}), which in turn implies that the number of vertices of \(P\) is a power of two.

  Let \(\B = (\B(1),\dots,\B(n))\) be a block configuration over \(\venuespace\) of minimal size such that
  \(\CTP[\blockconfig]\) is isomorphic to $P$. Note that we have $n\ge 3$ since $P$ is not a simplex (see Example~\ref{ex:simplex}). In fact, we can assume $P=\CTP[\blockconfig]$.
  Let \(v\) and \(\overlinemod{v}\) be two vertices of \(\CTP[\blockconfig]\) that do not lie in a common proper face of \(\CTP[\blockconfig]\)
with corresponding cyclic transversals \(\blockelement\) and \(\blockelementmod\).
  By shifting (see Remark~\ref{rem:cyclic_shift}) we can assume that \(\blockelementmod\) is the zero-transversal, i.e., we have \(\blockelementmod[i] = \zerovec\) for all $i \in [n]$. We observe that $\xi(i) = \zerovec$ does not hold for any $i \in [n]$, since that would imply that both $v$ and \(\overlinemod{v}\) satisfy the equation $x^i_{\zerovec} = 1$, which defines a proper face of $\CTP[\blockconfig]$ due to the minimality assumption on $\B$ (if $x^i_{\zerovec} = 1$ holds for the entire polytope $\CTP[\blockconfig]$ then removing the $i$-th block results in a \CTPtext that is isomorphic to $\CTP[\blockconfig]$, recall that we have $n \ge 3$).

Thus we have $\{\blockelement[i],\zerovec\} \subseteq \B(i)$ with $\xi(i) \ne \zerovec$ for each $i \in [n]$. In fact, $\{\blockelement[i],\zerovec\} = \B(i)$ holds for all $i \in [n]$, since for every $\omega \in \B(i) \setminus \{\blockelement[i],\zerovec\}$ the equation $x^i_{\omega}=0$ would be satisfied by both \(v\) and \(\overlinemod{v}\), which again due to the minimality property of $\B$ does not hold for the entire polytope $\CTP[\blockconfig]$.

With the matrix $M \in \F^{d\times n}$ whose columns are $\blockelement[1],\dots,\blockelement[n] \in \F^d\setminus\{\zerovec\}$ we then find that $\CTP[\blockconfig]$ is isomorphic to $\BSP[M]$ (see Example~\ref{ex:binary_matroids}).
\end{proof}

\begin{remark}\label{rem:ctp_necessary_cond}
From the proof of Theorem~\ref{thm:ctp_necessary_cond} we see that every \CTPtext which has a pair of vertices that is not contained in a common proper face, is in fact affinely isomorphic to a binary subspace polytope.
\end{remark}

Using Theorem~\ref{thm:ctp_necessary_cond} we can now give some examples of polytopes that are not isomorphic to \CTPtext{s}.

\begin{corollary}\label{cor:uniform_matroids}
  For each $r \ge 1$, the basis polytope (\ie, the convex hull of the incidence vectors of the bases) of the uniform matroid $U_{r,2r}$ of rank $r$ on $2r$ elements is not isomorphic to a \CTPtext.
\end{corollary}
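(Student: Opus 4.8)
The plan is to contradict the necessary condition of Theorem~\ref{thm:ctp_necessary_cond}, used in the sharper form of Remark~\ref{rem:ctp_necessary_cond}. First I would make the basis polytope $P_r$ of $U_{r,2r}$ explicit: its vertices are exactly the incidence vectors $\chi_B \in \{0,1\}^{2r}$ of the $r$-element subsets $B \subseteq [2r]$, so $P_r$ is the hypersimplex $\{x \in [0,1]^{2r} : \sum_{i=1}^{2r} x_i = r\}$ and has $\binom{2r}{r}$ vertices. Both the number of vertices and the property that two given vertices lie in a common proper face are invariants of the face lattice, and an affine isomorphism induces a combinatorial one; so it is enough to assume $P_r$ is isomorphic to some CTP and derive a contradiction about $P_r$ directly.

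The central step is to exhibit two vertices of $P_r$ that do \emph{not} lie in a common proper face. I would take a complementary pair $\chi_B$ and $\chi_{\overline{B}}$, where $\overline{B} = [2r]\setminus B$. The facets of the hypersimplex are precisely the sets $\{x_i = 0\}$ and $\{x_i = 1\}$ for $i \in [2r]$, and every proper face is contained in one of them. A facet $\{x_i = 1\}$ contains both $\chi_B$ and $\chi_{\overline{B}}$ only if $i \in B \cap \overline{B}$, and a facet $\{x_i = 0\}$ contains both only if $i \notin B \cup \overline{B}$; since $B \cap \overline{B} = \varnothing$ and $B \cup \overline{B} = [2r]$, neither is possible, so no proper face contains both vertices.

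By Remark~\ref{rem:ctp_necessary_cond}, any CTP that possesses such a pair is affinely isomorphic to a binary subspace polytope $\BSP[M]$ (Example~\ref{ex:binary_matroids}); its vertex set is $\{x \in \{0,1\}^{n} : Mx = \zerovec\}$, a linear subspace of $\F^{n}$, so its cardinality $2^{\dim \kernel M}$ is a power of two. Hence, were $P_r$ isomorphic to a CTP, $\binom{2r}{r}$ would have to be a power of two. Ruling this out is the main obstacle and the only genuinely non-formal part of the argument: for $r \ge 2$ I would invoke Bertrand's postulate to obtain a prime $p$ with $r < p \le 2r$; as $p > r$, this $p$ divides $(2r)!$ exactly once and does not divide $r!$, whence $p \mid \binom{2r}{r}$, and $p \ge 3$ is odd, so $\binom{2r}{r}$ is not a power of two. (For $r=1$ the polytope $P_1$ is a one-dimensional simplex, so the contradiction arises precisely in the substantive range $r \ge 2$.) This completes the argument.
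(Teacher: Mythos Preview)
Your approach is essentially identical to the paper's: exhibit a complementary pair of vertices not lying in any common proper face, then use Bertrand's postulate to show $\binom{2r}{r}$ is not a power of two, and conclude via Theorem~\ref{thm:ctp_necessary_cond}. The only cosmetic difference is in the first step: the paper observes that the midpoint of a complementary pair is the centroid $\tfrac12\onevec$, hence lies in the relative interior, whereas you argue directly via the facet description of the hypersimplex. Both are standard and equally short.

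You are right to flag $r=1$ as anomalous, and in fact the stated corollary is \emph{false} there: the basis polytope of $U_{1,2}$ is a one-dimensional simplex, which \emph{is} a cyclic transversal polytope by Example~\ref{ex:simplex}. The paper's own proof glosses over this (for $r=1$ the only prime in $\{r+1,\dots,2r\}$ is $2$, so the Bertrand argument does not yield an odd prime factor, and indeed $\binom{2}{1}=2$ is a power of two). So your hesitation at $r=1$ is well-founded; the corollary should read $r\ge 2$.
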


\begin{proof}
  As the bases of $U_{r,2r}$ are the $r$-element subsets of $[2r]$, the vertices of the basis polytope $P$ of $U_{r,2r}$ come in pairs for all of which the middle point of the line segment joining the two vertices forming the pair is $\frac12\onevec$, the centroid of the vertices of $P$, thus a point in the relative interior of $P$. Consequently, none of those pairs is contained in a proper face of $P$.

  Moreover, the number of vertices of $P$ equals
  \((2r)\cdot(2r-1)\cdots(r+1)/(r!)\), which is not a power of two, since at least one of the factors in the numerator is a prime according to Bertrand's postulate (see, e.g., \cite[Chap. 2]{AignerZiegler2018}).

  Thus the claim follows from Theorem~\ref{thm:ctp_necessary_cond}.
\end{proof}

Similar reasoning as used in the proof of Corollary~\ref{cor:uniform_matroids} yields the following examples of polytopes that are not isomorphic to \CTPtext{s}.

\begin{example}
  Let \(q \geq 3\) be a number that is not a power of two. Then the \(q\)-dimensional cross polytope \(\CROSS[q] \coloneqq \conv\{\pm\unitvec_1,\dots,\pm\unitvec_q\} \subseteq \R^q\) is not isomorphic to a \CTPtext.

  Maybe somewhat surprisingly, whenever \(q\) is a power of two, \(\CROSS[q]\) indeed is affinely isomorphic to a \CTPtext. A constructive proof of that statement can be found in \cite[Proof of Thm.~3.24]{Frede2023}.

\end{example}

We eventually touch briefly the question whether traveling salesman polytopes are isomorphic to \CTPtext{s}.

\begin{example}\label{ex:tsp}
  Let us denote by \(\TSP[q]\) the \emph{traveling salesman polytope} associated with the complete (undirected) graph $K_q$ on the node set $[q]$. Clearly, $\TSP[3]$ is a single point and $\TSP[4]$ has three vertices. They both are simplices, hence they are affinely isomorphic to \CTPtext{s} (see Example~\ref{ex:simplex}). The polytope $\TSP[5]$ has $12$ vertices and is defined by the degree equations and the zero-one bounds on the variables~\cite{Norman1955,GroetschelPadberg1979a}. Since the Hamiltonian cycles corresponding to the node-sequences $(1,2,3,4,5)$ and $(1,3,5,2,4)$ partition the edge set of $K_5$ into two disjoint subsets, Theorem~\ref{thm:ctp_necessary_cond} implies that $\TSP[5]$ is not isomorphic to any \CTPtext. We conjecture that a similar statement holds for $\TSP[q]$ with $q\ge 6$.
\end{example}

     \section{Equations and inequalities for CTPs}
\label{sec:eq_and_ieneq}

Since  the \CTPtext{s} for block configurations of length two are simplices (see Example~\ref{ex:simplex}), we restrict our attention to block configurations $\B$ in $\F^d$ of length $n \ge 3$ in the following.

As we observed in Section~\ref{sec:concept}, the full \CTPtext $\CTP[1,n]$ is isomorphic to the parity polytope
\(
    \EVEN[n]
\).
We thus can translate some well-known facts about $\EVEN[n]$
(first observed by Jeroslow~\cite{Jeroslow75}) into facts about $\CTP[1,n]$.

\begin{remark}\label{rem:CTP1n}
  For all $n \ge 3$, the following statements hold:
  \begin{enumerate}
    \item $\CTP[1,n]$ is the solution set to the transversal equations $x^i_0 + x^i_1 = 1$ for all $i \in [n]$, the nonnegativity constraints $x^i_0,x^i_1 \ge 0$ for all $i \in [n]$, and the
    \emph{odd-set inequalities}
    \begin{equation}\label{eq:odd_set_ieq}\tag{OS}
      \sum_{i \in I} x^i_0 +\sum_{i \in [n]\setminus I} x^i_1 \ge 1
    \end{equation}
    for all subsets $I \subseteq [n]$ of odd cardinality $|I| \in 2\Z+1$.
    \item $\CTP[1,n]$ has dimension $n$.
    \item The nonnegativity constraints define facets of $\CTP[1,n]$ for $n\ge 4$.
    \item The odd-set inequalities define facets of $\CTP[1,n]$.
  \end{enumerate}
\end{remark}

Let $\blockconfig$ be any block configuration in $\F^d$ of length $n$, $\eta \in \F^d\setminus\{\zerovec\}$ some non-zero vector, $\varphi : \F^d \rightarrow\F$ the linear form with $\varphi(\omega) = \eta^T\omega$ for all $\omega\in\F^d$, $\blockconfigmod = \varphi(\blockconfig)$, and $I \subseteq [n]$ a subset of odd cardinality. Then the $\blockconfigmod$-restriction of~\eqref{eq:odd_set_ieq} is valid for $\CTP[\blockconfigmod]$ (see Remark~\ref{rem:CTP_face_of_full_CTP}). Hence the $\varphi$-lifting
\begin{equation}\label{eq:lifted_odd_set_ieq}\tag{LOS}
  \sum_{i \in I} \sum_{\omega\in \block[i]: \eta^T\omega = 0}x^i_{\omega} +\sum_{i \in [n]\setminus I} \sum_{\omega\in \block[i]: \eta^T\omega = 1}x^i_{\omega} \ge 1
\end{equation}
of the $\blockconfigmod$-restriction of~\eqref{eq:odd_set_ieq} is valid for $\CTP[\blockconfig]$. We call~\eqref{eq:lifted_odd_set_ieq} the \emph{lifted odd-set inequality} (\emph{LOS inequality}) for $\CTP[\blockconfig]$ associated with $\eta$ and $I$. Clearly, the LOS inequality for $\CTP[\blockconfig]$ associated with $\eta$ and $I$ is the $\blockconfig$-restriction of the LOS inequality for $\CTP[d,n]$ associated with $\eta$ and $I$. The LOS inequalities for $\CTP[1,n]$ are the odd-set inequalities~\eqref{eq:odd_set_ieq}.

For every $j \in [d]$
and $(\xi(1),\dots,\xi(n)) \in \block[1]\times\cdots\times\block[n]$ whose incidence vector satisfies~\eqref{eq:lifted_odd_set_ieq} for $\eta=\unitvec_j \in\F^d$ (the $j$-th standard basis vector having a $0$ in each component except for a $1$ in the $j$-th one) we have $\sum_{i=1}^n \xi(i)_j = 0$ for all $j \in [d]$, which establishes the following observation.

\begin{proposition}
    For every block configuration $\blockconfig$ in $\F^d$, the polytope $\CTP[\blockconfig]$ is the convex hull of all integral points in the transversal polytope $\TP[\B]$ (i.e., all 0/1-vectors fulfilling the transversal equations) that satisfy all LOS inequalities associated with the vectors $\unitvec_1,\dots, \unitvec_d \in \F^d$.
\end{proposition}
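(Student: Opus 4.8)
The plan is to reduce the statement to a purely combinatorial equivalence. Since the integral points of $\TP[\B]$ are exactly the $0/1$-vectors satisfying the transversal equations~\eqref{eq:transversal}, hence precisely the incidence vectors of transversals $\blockelement$ of $\blockconfig$, and since $\CTP[\blockconfig]$ is by definition the convex hull of the incidence vectors of the \emph{cyclic} transversals, it suffices to prove that a transversal $\blockelement$ is cyclic if and only if its incidence vector satisfies every LOS inequality~\eqref{eq:lifted_odd_set_ieq} associated with a standard basis vector $\unitvec_j$ ($j \in [d]$) and an odd set $I$. Taking convex hulls on both sides then yields the claimed identity.

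First I would dispose of one direction using validity: as LOS inequalities are valid for $\CTP[\blockconfig]$, the incidence vector of any cyclic transversal satisfies all of them, in particular those associated with the $\unitvec_j$. For the converse I would fix $j \in [d]$ and evaluate the inequality for $\eta = \unitvec_j$ on the incidence vector of a transversal $\blockelement$. Here $\unitvec_j^T\omega = \omega_j$, so on this $0/1$-vector the left-hand side of~\eqref{eq:lifted_odd_set_ieq} equals the number of indices $i \in I$ with $\blockelement[i]_j = 0$ plus the number of indices $i \in [n]\setminus I$ with $\blockelement[i]_j = 1$. Thus the inequality fails exactly when $I = \{i \in [n] : \blockelement[i]_j = 1\}$, and for no other set $I$.

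The crux is then the parity bookkeeping: a violated LOS inequality for $\unitvec_j$ exists if and only if $\{i : \blockelement[i]_j = 1\}$ is itself a set of odd cardinality, equivalently if and only if $\sum_{i=1}^n \blockelement[i]_j = 1$ in $\F$. Hence $\blockelement$ satisfies all LOS inequalities associated with $\unitvec_j$ precisely when $\sum_{i=1}^n \blockelement[i]_j = 0$, and imposing this for every $j \in [d]$ is exactly the condition $\sum_{i=1}^n \blockelement[i] = \zerovec$ that makes $\blockelement$ cyclic. I do not anticipate a real obstacle; the only subtlety worth stating carefully is why the violation criterion is an \emph{if and only if}: because the LOS inequalities are indexed only by odd-cardinality sets $I$, the \emph{existence} of a violated one detects odd parity of $\{i : \blockelement[i]_j = 1\}$ rather than its mere non-emptiness.
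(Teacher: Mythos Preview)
Your proposal is correct and follows essentially the same approach as the paper: the paper's proof is the single sentence preceding the proposition, which notes that a transversal whose incidence vector satisfies all LOS inequalities for $\eta=\unitvec_j$ must have $\sum_{i=1}^n \xi(i)_j = 0$, and your argument simply spells out this parity check in detail (identifying that the left-hand side equals $|I \triangle \{i:\xi(i)_j=1\}|$, so a violation occurs iff that set is odd).
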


Before we investigate properties of LOS inequalities in more detail, we make a few observations regarding the high degree of symmetry of full \CTPtext{s} $\PIP[d,n]$.
For two cyclic transversals $\sigma, \xi \in \CT[d,n]$ their sum $\xi + \sigma = (\xi(1)+\sigma(1),\dots,\xi(n)+\sigma(n)) \in \CT[\B(d,n)]$ is a cyclic transversal as well, which we call
the \emph{$\sigma$-shift} of $\xi$ (or, by symmetry, the \emph{$\xi$-shift} of $\sigma$). Clearly, defining addition that way we impose a group structure on $\CT[d,n]$ with the
zero-transversal serving as the neutral element and each cyclic transversal being its own inverse. For each $\sigma \in \CT[d,n]$, we refer to the map $\Sigma$ defined in Remark~\ref{rem:cyclic_shift} (with $\B = \B+\sigma = \B(d,n)$)---that here is
the coordinate permutation $\Sigma : \R^{\B(d,n)} \rightarrow \R^{\B(d,n)}$ with
\[
  \Sigma (x)^i_{\omega} = x^{i}_{\omega-\sigma(i)} = x^{i}_{\omega+\sigma(i)}
\]
for all $i \in [n]$ and $\omega\in\F^d$---as the \emph{$\sigma$-shift}. Note that we have $\Sigma^{-1} = \Sigma$. Clearly, $\Sigma$ maps the incidence vector of a cyclic transversal to the incidence vector of the $\sigma$-shift of that cyclic transversal. Consequently, we have
\[
  \Sigma (\PIP[d,n]) = \PIP[d,n]
\]
for each $\sigma \in \CT[d,n]$ (which is a special case of Remark~\ref{rem:cyclic_shift}). For every pair $\xi^1,\xi^2 \in \CT[d,n]$ the $\sigma$-shift with $\sigma = \xi^1+\xi^2$ maps the vertex of $\CTP[d,n]$ corresponding to $\xi^1$ to the vertex corresponding to $\xi^2$, and vice versa. Consequently, $\PIP[d,n]$ looks the same at each vertex in the sense that the radial cones at all vertices are pairwise isometric (via coordinate permutations). Phrased differently, the additive group $\CT[d,n]$ acts vertex-transitively on $\CTP[d,n]$ via the shifting operation.

\begin{remark}\label{rem:shift}
  For each inequality $a^Tx \ge \beta$ that defines a face $F$ of $\PIP[d,n]$ and for each $\sigma \in \CT[d,n]$ with $\sigma$-shift $\Sigma$ the \emph{$\sigma$-shift} $\Sigma(a)^Tx \ge \beta$ of $a^Tx\ge \beta$ defines the face $\Sigma (F)$ of $\PIP[d,n]$ (recall $\Sigma ^{-1} = \Sigma$). In particular, the $\sigma$-shift of a facet-defining inequality of $\CTP[d,n]$ is a facet-defining inequality as well.
\end{remark}

As an example for shifting an inequality, let $a^Tx \ge 1$ be the LOS inequality for $\PIP[d,n]$ associated with $\eta \in \F^d \setminus\{\zerovec\}$ and the odd subset $I \subseteq [n]$. For any $\sigma \in \CT[d,n]$ the $\sigma$-shift of $a^T x \ge 1$ is the LOS inequality associated with the same $\eta$ and the odd subset
\[
  I \, \triangle \, \{i \in [n]: \eta^T \sigma(i) = 1\}\,,
\]
where $A \triangle B$ denotes the symmetric difference $(A \setminus B) \cup (B \setminus A)$ of $A$ and $B$.

\begin{theorem}
    \label{thm:dim_and_facets}
    The following statements hold for all full \CTPtext{s} with $d \ge 1$ and $n \ge 3$.
    \begin{enumerate}
        \item The affine hull of $\PIP[d,n]$ is described by the transversal equations~\eqref{eq:transversal}.
        \item We have $\dim(\PIP[d,n]) = n(2^d-1)$.
        \item The nonnegativity inequalities define facets of $\PIP[d,n]$, except for the case $(d,n) = (1,3)$.
        \item The LOS inequalities define facets of $\PIP[d,n]$.
    \end{enumerate}
\end{theorem}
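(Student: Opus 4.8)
The plan is to prove (1) and (2) simultaneously by showing that the transversal equations~\eqref{eq:transversal} span every affine equation valid on $\CTP[d,n]$. I would take an arbitrary functional $\xi\mapsto\sum_{i=1}^n c^i_{\xi(i)}$ that is constant over $\CT[d,n]$ and probe it with the zero transversal and with the two-support transversals $\xi(p)=\xi(q)=v$ (all other entries $\zerovec$), which are cyclic since $v+v=\zerovec$. Subtracting the value at the zero transversal yields $(c^p_v-c^p_{\zerovec})+(c^q_v-c^q_{\zerovec})=0$ for every pair $p\neq q$ and every $v\neq\zerovec$; because $n\geq 3$, comparing three indices forces each summand to vanish, so $c^i_\omega=c^i_{\zerovec}=:\lambda_i$ depends only on the block. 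Thus the functional equals $\sum_i\lambda_i\sum_\omega x^i_\omega$, a combination of the $n$ linearly independent transversal equations; this gives the affine hull~(1) and, as the ambient space has dimension $n2^d$, the value $n2^d-n=n(2^d-1)$ for~(2).

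For~(3) I would first use the shift maps (\cref{rem:cyclic_shift,rem:shift}) together with permutations of the (identical) blocks to reduce to a single nonnegativity inequality, say $x^n_{\zerovec}\geq 0$, whose face $F$ consists of the cyclic transversals with $\xi(n)\neq\zerovec$; it then suffices to show that every functional constant on $F$ lies in the span of the transversal equations and $x^n_{\zerovec}$. Writing $g_i(\omega)=c^i_\omega-c^i_{\zerovec}$, the two-support transversals $\xi(p)=\xi(n)=v$ with $v\neq\zerovec$ (which stay in $F$) give $g_p(v)+g_n(v)=\delta$, so $h(v):=g_p(v)$ is independent of $p<n$, while the three-support transversals $\xi(p)=v,\ \xi(q)=w,\ \xi(n)=v+w$ (using $n\ge 3$) yield $h(v)+h(w)=h(v+w)$. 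For $d\geq 2$ this additivity combined with $h(v)+h(v)=0$ forces $h\equiv 0$, hence per-block constancy away from the single coordinate $(n,\zerovec)$, and so a facet for every $n\geq 3$. For $d=1$ there is only one nonzero vector and the argument degenerates; here the claim, including the exceptional case $(1,3)$, is exactly the facet information for $\CTP[1,n]\cong\EVEN[n]$ recorded in \cref{rem:CTP1n}.

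For~(4) I would reduce the data $(\eta,I)$ to a normal form: a suitable invertible linear change of coordinates on $\F^d$ (\cref{rem:block_iso}) carries any $\eta\neq\zerovec$ to $\unitvec_1$, and shifting---which alters $I$ only by an even-cardinality set (\cref{rem:shift})---reduces the odd set $I$ to $\{n\}$; the case $d=1$ is then the odd-set inequality itself, a facet by \cref{rem:CTP1n}, so assume $d\ge 2$. Splitting each block along the subspace $U=\{\omega\in\F^d:\eta^T\omega=0\}$, a cyclic transversal lies on the face $G$ defined by~\eqref{eq:lifted_odd_set_ieq} exactly when its $\eta$-parity vector $(\eta^T\xi(1),\dots,\eta^T\xi(n))$ sits on the odd-set facet of $\EVEN[n]$, i.e. differs in a single coordinate from the indicator of $I$: either all parities are $0$ (type~(a)) or precisely block $n$ and one block $j<n$ have parity $1$ (type~(b)). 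The type-(a) vertices are just the cyclic transversals valued in $U\cong\F^{d-1}$, so the argument from~(1)--(2) pins the parity-$0$ coefficients to block constants $\lambda_i$ and forces $\gamma=\sum_i\lambda_i$; feeding the type-(b) vertices (letting a third block, which exists as $n\ge 3$, absorb $\xi(j)+\xi(n)\in U$) into a relation of the shape $c^j_s+c^n_t=\mathrm{const}$ then makes the parity-$1$ coefficients block-constant too. Assembling the pieces expresses $c$ as $\sum_i\lambda_i\sum_\omega x^i_\omega$ plus a multiple of the LOS functional, and the identity $\gamma=\sum_i\lambda_i$ is precisely what allows one multiplier to serve all blocks; since the LOS functional takes both values on the blocks $i<n$ it is independent of the transversal equations, so $G$ has codimension one.

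The main obstacle throughout is the same decoupling phenomenon: the natural test transversals only ever produce relations coupling the coefficients of two blocks at once, and promoting these to genuine per-block constancy is where the work lies---via the three-index cancellation (exploiting $n\geq 3$) in~(1)--(4), and, in~(3), via the observation that a real-valued additive function on the $2$-torsion group $\F^d$ must vanish, which is exactly what fails for $d=1$ and accounts for the $(1,3)$ exception.
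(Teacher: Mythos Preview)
Your proof is correct. Parts~(1) and~(2) match the paper's argument essentially verbatim. For~(3) and~(4) you take a genuinely different and somewhat cleaner route.

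For~(3), the paper reduces to $x^1_{\onevec}\ge 0$, normalizes, and then splits into cases: coefficients $a^i_\omega$ with $\omega\neq\onevec$ are killed by the three-index trick from~(1); for $\omega=\onevec$ the paper treats $n\ge 4$ (three indices available in $[n]\setminus\{1\}$) and $n=3$, $d\ge 2$ (via the ad~hoc transversal $\xi(i)=\onevec$, $\xi(1)=\unitvec_1$, $\xi(j)=\onevec+\unitvec_1$) separately. Your additivity argument handles all $n\ge 3$, $d\ge 2$ uniformly. One remark on wording: your phrase ``combined with $h(v)+h(v)=0$'' is slightly misleading, since that identity is not an independent input but the conclusion---from $h(v)+h(w)=h(v+w)$ and $h(v)+h(v+w)=h(w)$ (both available once $d\ge 2$ supplies a $w\notin\{0,v\}$) one adds to get $2h(v)=0$. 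The deduction is sound.

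For~(4), the paper argues by induction on $d$: it restricts a valid equation on the face to the coordinates with $\omega_d=0$, invokes the claim for $d-1$, subtracts the resulting combination, and then kills the remaining coefficients via explicit transversals in the face. Your approach is direct: the type-(a) vertices are exactly $\CT[d-1,n]$ sitting inside $U^n$, so~(1) forces the parity-$0$ coefficients to be block constants $\lambda_i$; the three-support type-(b) transversals then give $c^j_s+c^n_t=\lambda_j+\lambda_n$, whence the parity-$1$ coefficients are block constants $\mu_i$ with $\mu_j-\lambda_j=\lambda_n-\mu_n$ independent of $j<n$, which is exactly a scalar multiple of the LOS coefficient vector modulo the transversal equations. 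Your argument avoids induction at the price of a small bookkeeping step; the paper's induction is more mechanical but perhaps more readily generalizable.
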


\begin{proof}Throughout the proof, for $a \in \R^{\B(d,n)}$ and $\beta \in \R$ we will denote by
  \[
    R(a,\beta) \coloneqq \{\xi \in \CT[d,n] : \sum_{i=1}^n a^i_{\xi(i)} = \beta \}
  \]
  the set of all cyclic transversals of $\B(d,n)$ whose incidence vectors satisfy the equation $a^Tx=\beta$.

  \paragraph{Proof of Statement~1.}
    Let us assume that $a^Tx = \beta$ is an equation
    that holds for $\PIP[d,n]$, i.e., we have $\CT[d,n]\subseteq R(a,\beta)$. We need to show that $a$ is a linear combination of the coefficient vectors of the transversal equations. By subtracting appropriate multiples of those coefficient vectors we may assume $a^i_{\zerovec{}} = 0$ for all $i \in [n]$. Plugging the incidence vector of the \emph{zero-transversal} $\zerovec \in \CT[d,n]$ with $\zerovec(i)=\zerovec$ for all $i \in [n]$ into $a^Tx = \beta$ we find $\beta=0$.
    It suffices to show $a=\zerovec$. Towards this end, we observe that for all $p,q \in [n]$ with $p \ne q$ and for all $\omega\in\F^d$, substituting for $x$ in $a^Tx=0$ the incidence vector of the cyclic transversal $\xi \in \CT[d,n]$ with $\xi(p)=\xi(q)=\omega$ and $\xi(i) = \zerovec$ for all other $i \in [n]\setminus\{p,q\}$ shows $a^p_{\omega}=-a^q_{\omega}$. Hence, choosing for each $i \in [n]$ some $j,k \in [n] \setminus \{i\}$ with $j \ne k$ (such a pair exists due to $n \ge 3$) we find
    \[
      a^i_{\omega}=-a^j_{\omega}=a^k_{\omega}=-a^i_{\omega}\,,
    \]
    thus $a^i_{\omega}=0$ for all $\omega\in \F^d$, which completes the proof of the first statement.

    \paragraph{Proof of Statement~2.}
    This is an immediate consequence of the first statement.

    \paragraph{Proof of Statement~3.}
  By renumbering the blocks it suffices to establish (for $(d,n) \ne (1,3)$) that $x^{1}_{\omega} \ge 0$ defines a facet of $\PIP[d,n]$ for all $\omega \in \F^d$. In fact, applying the $\sigma$-shift with $\sigma(1)=\sigma(2)=\onevec{}+\omega$ and $\sigma(i) = \zerovec$ for all $i \in [n]\setminus\{1,2\}$ shows that we only have to establish the claim for $x^1_{\onevec{}} \ge 0$.

  Towards this end, let $R \coloneqq \{\xi \in \CT[d,n] : \xi(1) \ne \onevec{}\}$ be the set of all cyclic transversals whose incidence vectors are vertices of the face defined by $x^1_{\onevec{}}\ge 0$. As that face clearly is not equal to $\PIP[d,n]$ (e.g., the cyclic transversal $\xi\in \CT[d,n]$ with $\xi(1)=\xi(2)=\onevec$ and $\xi(i) = \zerovec$ for all $i \in [n]\setminus\{1,2\}$ is not contained in $R$), it suffices to show that
    each equation $a^Tx = \beta$ with $R \subseteq R(a,\beta)$ is a linear combination of the transversal equations and the equation $x^1_{\onevec{}}=0$.

    Adding an appropriate linear combination of those $n+1$ equations to $a^Tx = \beta$ we obtain an equation (also denoted by $a^Tx = \beta$, still satisfying $R \subseteq R(a,\beta)$) with $a^i_{\zerovec{}}=0$ for all $i \in [n]$ and $a^1_{\onevec{}}=1$, for which it now suffices to show that all other coefficients are zeros. Again, plugging the zero-transversal $\zerovec \in R$ into the equation reveals $\beta = 0$. For each $i \in [n]$ and $\omega \in \F^d\setminus \{\onevec\}$ we establish $a^i_{\omega}=0$ by arguments similar to those used in the proof of the first statement (as every cyclic transversal $\xi\in \CT[d,n]$ with $\xi(i) \ne \onevec$ for all $i \in [n]$ is contained in $R$).

    It remains to show $a^i_{\onevec{}}=0$ for all $i \in [n] \setminus \{1\}$. In case of $n \ge 4$, we can again proceed as we did in the proof of the first statement by choosing for each $i \in [n]\setminus\{1\}$ both $j \ne k$ from $[n]\setminus\{1,i\}$.
    For $n=3$  we have $d \ge 2$ (in the statement to be proven the case $(d,n)=(1,3)$ was excluded). For $i \in \{2,3\}$ we then denote by $j$ the index with $\{i,j\}=\{2,3\}$ and consider the cyclic transversal $\xi \in \CT[d,n]$ with
    \[
      \xi(i) = \onevec{}, \xi(1) = \unitvec_1, \text{ and } \xi(j) = \onevec{}+\unitvec_1\,.
    \]
    We have $\unitvec_1 \ne \onevec{}$ due to $d \ge 3$. Thus (recall $i \ne 1$)
    $\xi \in R$ and hence $a^i_{\onevec{}} + a^1_{\unitvec_1} + a^j_{\onevec{}+\unitvec_1} = 0$ holds with $a^1_{\unitvec_1}=a^j_{\onevec{}+\unitvec_1} =0$, which implies $a^i_{\onevec{}}=0$.

    \paragraph{Proof of Statement~4.}
    We first observe that by applying a suitable isomorphism to $\F^d$, it suffices to consider LOS inequalities for $\eta = \unitvec_1$. By renumbering blocks, we may assume $1 \in I$ (recall that $|I|$ is odd). Furthermore, by applying the $\sigma$-shift with
    \[
      \sigma(i) = \unitvec_1 \quad\text{for all }i \in I\setminus \{1\}
    \]
    and $\sigma(j) = \zerovec$ for all other $j$ we can restrict to the case $I = \{1\}$. Note that $\sigma \in \CT[d,n]$ since $|I|$ is odd.
    Hence, it remains to show that the face $F(d,n)$ of $\PIP[d,n]$ defined by
    \begin{equation}\label{eq:lifted_canonical_cut_facet_ieq}\tag{$E(d,n)$}
      \sum_{\omega \in \F^d:\omega_1=0} x^1_{\omega}+ \sum_{i=2}^n\sum_{\omega \in \F^d:\omega_1=1}x^i_{\omega} = 1
    \end{equation}
    is a facet.

    We denote by $R(d,n) \in \CT[d,n]$ the set of all cyclic transversals that correspond to vertices of $F(d,n)$.
Since we have $F(d,n) \subsetneq \PIP[d,n]$ (as, e.g., the cyclic transversal $\xi\in \CT[d,n]$ with $\xi(2)=\xi(3)=\onevec$ and $\xi(i) = \zerovec$ for all $i \in [n]\setminus\{2,3\}$ is not contained in $R(d,n)$), to complete the proof it suffices to establish the following.
\begin{nestedenvironment}{theorem}
  \begin{claim}\label{claim:facets}
    For $d \ge 1$ and $n \ge 3$ each equation $a^Tx = \beta$ with $R(d,n) \subseteq R(a,\beta)$ is a linear combination of the transversal equations for $\PIP[d,n]$ and~\eqref{eq:lifted_canonical_cut_facet_ieq}.
  \end{claim}
\end{nestedenvironment}

We prove the claim by induction on $d$, where we know that it holds for $d=1$ (see Remark~\ref{rem:CTP1n}).
Therefore, let us assume $d \ge 2$, and define for each pair $(a,b) \in \F^2$ the set
\[
  \Omega_{ab} \coloneqq \{\omega \in \F^d : \omega_1=a, \omega_d = b\}\,.
\]
A cyclic transversal $\xi \in \CT[d,n]$ is contained in $R(d,n)$ if and only if
    \begin{equation}\label{eq:odd_set_facet_except}
      \xi(1) \in \Omega_{10}\cup\Omega_{11}
      \quad\text{and}\quad
      \xi(i) \in \Omega_{00}\cup\Omega_{01}
      \text{ for all }i \in \{2,\dots,n\}
    \end{equation}
    hold with the exception of exactly one of $\xi(1), \ldots, \xi(n)$.

Let $a^Tx = \beta$ be any equation with $R(d,n) \subseteq R(a,\beta)$. Denoting by $\tilde{a} \in \R^{\B(d-1,n)}$ the vector with
\[
  \tilde{a}^i_{(\omega_1,\dots,\omega_{d-1})} = a^i_{(\omega_1,\dots,\omega_{d-1},0)}
  \quad\text{for all }i \in [n] \text{ and }\omega \in \F^{d-1}
\]
we find $R(d-1,n) \subseteq R(\tilde{a},\beta)$, since for each cyclic transversal $\tilde{\xi} \in R(d-1,n)$ the cyclic transversal $\xi\in\CT[d,n]$ with $\xi(i) = (\tilde{\xi}(i)_1,\dots,\tilde{\xi}(i)_{d-1},0)$ for all $i \in [n]$ is contained in $R(d,n)\subseteq R(a,\beta)$. Thus, by the induction hypothesis and since the affine hull of $\PIP[d-1,n]$ is described by the transversal equations according to the first statement of the theorem, there are multipliers $\lambda_0\in \R$ for $E(d-1,n)$ and $\lambda_1,\dots,\lambda_n \in \R$ for the respective transversal equations for $\PIP[d-1,n]$ such that the corresponding linear combination of equations equals $\tilde{a}^T\tilde{x}=\beta$. Let us subtract from $a^Tx = \beta$ the linear combination of $E(d,n)$ and the transversal equations for $\CT[d,n]$ formed with the multipliers $\lambda_0,\lambda_1,\dots,\lambda_n$, respectively, and keep denoting the resulting equation by $a^Tx = \beta$. We have $\beta = 0$ as well as
\begin{equation}\label{eq:odd_set_facet_have}
  a^i_{\omega} = 0 \quad \text{for all } i \in [n] \text{ and }\omega \in \Omega_{00} \cup \Omega_{10} \,.
\end{equation}
To complete the proof it suffices to show
\begin{equation}\label{eq:odd_set_facet_show}
  a^i_{\omega} = 0 \quad \text{for all } i \in [n] \text{ and }\omega \in \Omega_{01} \cup \Omega_{11}\,.
\end{equation}

In order to establish~\eqref{eq:odd_set_facet_show}, let us first consider an arbitrary $\omega \in \Omega_{01}$.
For all $p,q \in [n]\setminus\{1\}$ with $p \ne q$ we observe that the cyclic transversal $\xi$ with
\[
  \xi(p) =
  \xi(q) = \omega \in \Omega_{01},\quad\text{and }
  \xi(r) = \zerovec \in \Omega_{00}
  \text{ for all }r \in [n]\setminus\{p,q\}
\]
is contained in $R(d,n)$ (with the exception in~\eqref{eq:odd_set_facet_except} at block~$1$). By~\eqref{eq:odd_set_facet_have} we conclude
\begin{equation}\label{eq:odd_set_facet:3}
  a^p_{\omega}= -a^q_{\omega}
  \quad\text{for all }p,q \in[n]\setminus\{1\} \text{ with }p\ne q\,.
\end{equation}
Furthermore, for each $i \in [n]\setminus\{1\}$ the cyclic transversal $\xi$ with
\[
  \xi(1) =
  \xi(i) = \omega \in \Omega_{01},\quad\text{and }
  \xi(j) = \zerovec \in \Omega_{00} \quad\text{for all }j \in [n]\setminus\{1,i\}
\]
is contained in $R(d,n)$ (again with the exception in~\eqref{eq:odd_set_facet_except} at block~$1$), which by~\eqref{eq:odd_set_facet_have} yields
\begin{equation}\label{eq:odd_set_facet:4}
  a^i_{\omega} = -a^1_{\omega}
  \quad\text{for all }i \in[n]\setminus\{1\}\,.
\end{equation}
From~\eqref{eq:odd_set_facet:3} and~\eqref{eq:odd_set_facet:4} we derive (using $n\ge 3$ )
\begin{equation*}
  a^i_{\omega} = 0
  \quad\text{for all }i \in[n]\,.
\end{equation*}
Thus we have shown
\begin{equation}\label{eq:odd_set_facet:5}
  a^i_{\omega} = 0 \quad \text{for all } i \in [n] \text{ and }\omega \in \Omega_{01}\,.
\end{equation}

For each $\omega \in \Omega_{11}$ we now consider the cyclic transversal $\xi$ with
\[
  \xi(1) = \omega \in \Omega_{11},\quad
  \xi(2) = \omega + \unitvec_d \in \Omega_{10},\quad
  \xi(3) = \unitvec_d \in \Omega_{01}
\]
and
\[
  \xi(i) = \zerovec \in \Omega_{00} \quad\text{for all }i \in [n]\setminus\{1,2,3\}
\]
that is contained in $R(d,n)$ (with the exception in~\eqref{eq:odd_set_facet_except} at block~$2$) in order to deduce $a^1_{\omega}=0$ via~\eqref{eq:odd_set_facet_have} and~\eqref{eq:odd_set_facet:5}. Finally, for $i \in [n]\setminus \{1\}$ we choose $j \in \{2,3\}\setminus\{i\}$ and
use the cyclic transversal $\xi$ with
\[
  \xi(i) = \omega \in \Omega_{11},\quad
  \xi(1) = \omega + \unitvec_d \in \Omega_{10},\quad
  \xi(j) = \unitvec_d \in \Omega_{01}
\]
and
\[
  \xi(j) = \zerovec \in \Omega_{00} \quad\text{for all }j \in [n]\setminus\{1,i,j\}
\]
that is contained in $R(d,n)$ (with the exception in~\eqref{eq:odd_set_facet_except} at block~$i$) in order to derive $a^i_{\omega}=0$ via~\eqref{eq:odd_set_facet_have} and~\eqref{eq:odd_set_facet:5}.
\end{proof}

     \section{Low rank block configurations}
\label{sec:lowrank}

As in Section~\ref{sec:eq_and_ieneq}, we restrict our attention to $n\ge 3$, since $\PIP[d,1]$ is a single point and $\PIP[d,2]$ is a simplex.

We start by an observation on LOS inequalities.
\begin{lemma}\label{lem:lifting_LOS}
    Let $\B$ be a block configuration in $\F^d$ and $\varphi:\F^d\rightarrow\F^{\overlinemod{d}}$ a linear map. Then the $\varphi$-lifting of any LOS inequality for $\CTP[\varphi(\blockconfig)]$ is a LOS inequality for $\CTP[\blockconfig]$ or valid for $\TP[\blockconfig]$.
\end{lemma}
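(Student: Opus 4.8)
The plan is to evaluate the $\varphi$-lifting of a generic LOS inequality for $\CTP[\varphi(\blockconfig)]$ term by term and to recognize the outcome as an LOS inequality for $\CTP[\blockconfig]$, with a single degenerate case accounting for the ``or valid for $\TP[\blockconfig]$'' alternative.

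First I would fix such an LOS inequality, associated with some $\bar\eta \in \F^{\overlinemod{d}}\setminus\{\zerovec\}$ and an odd subset $I \subseteq [n]$; by \eqref{eq:lifted_odd_set_ieq} its coefficient on $\bar x^i_{\bar\omega}$ equals $1$ exactly when $i \in I$ and $\bar\eta^T\bar\omega = 0$, or $i \in [n]\setminus I$ and $\bar\eta^T\bar\omega = 1$, and $0$ otherwise. Since $\blockmod[i] = \varphi(\block[i])$, each $\omega \in \block[i]$ enters the lifted inequality \eqref{eq:lifting_lifted_ieq} only through the single index $\bar\omega = \varphi(\omega) \in \blockmod[i]$, so the lifting assigns to $x^i_\omega$ precisely the coefficient of $\bar x^i_{\varphi(\omega)}$ recorded above.

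The key step is to introduce $\eta \in \F^d$ as the vector representing the linear form $\omega \mapsto \bar\eta^T\varphi(\omega)$ on $\F^d$, so that $\bar\eta^T\varphi(\omega) = \eta^T\omega$ for all $\omega$. Substituting this identity into the coefficient description shows that the lifted inequality puts a $1$ on $x^i_\omega$ precisely when $i \in I$ and $\eta^T\omega = 0$, or $i \in [n]\setminus I$ and $\eta^T\omega = 1$. Comparing with \eqref{eq:lifted_odd_set_ieq}, this is verbatim the LOS inequality for $\CTP[\blockconfig]$ associated with $\eta$ and $I$ --- as long as $\eta \ne \zerovec$.

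The one point requiring care, and the reason for the second alternative in the statement, is the degenerate case $\eta = \zerovec$, which can occur even with $\bar\eta \ne \zerovec$ (exactly when $\bar\eta$ is orthogonal to the image of $\varphi$, hence to every $\bar\omega \in \blockmod[i]$). Then $\eta^T\omega = 0$ for all $\omega$, so the lifted inequality collapses to $\sum_{i \in I}\sum_{\omega\in\block[i]} x^i_\omega \ge 1$; by the transversal equations \eqref{eq:transversal} its left-hand side equals $|I|$, which is at least $1$ because $|I|$ is odd, so the inequality is valid for all of $\TP[\blockconfig]$. I do not anticipate a genuine obstacle here: the whole argument is coefficient bookkeeping under the lifting, and the only subtlety is isolating the $\eta = \zerovec$ case cleanly.
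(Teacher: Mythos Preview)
Your proposal is correct and follows essentially the same approach as the paper: both arguments introduce the vector $\eta \in \F^d$ via the identity $\bar\eta^T\varphi(\omega)=\eta^T\omega$ (the paper phrases this as the composition $\psi\circ\varphi$ of linear forms), recognize the $\varphi$-lifting as the LOS inequality for $\CTP[\blockconfig]$ associated with $\eta$ and $I$ when $\eta\ne\zerovec$, and dispose of the degenerate case $\eta=\zerovec$ by noting that the lifted inequality then reduces to $\sum_{i\in I}\sum_{\omega\in\block[i]}x^i_\omega\ge 1$, which the transversal equations render valid on all of $\TP[\blockconfig]$.
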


\begin{proof}
    Let $\overlinemod{a}^T\overlinemod{x}\ge\beta$ be a LOS inequality for $\CTP[\varphi(\blockconfig)]$ associated with a non-zero vector $\overlinemod{\eta} \in \F^{\overlinemod{d}}\setminus\{\zerovec\}$ and an odd set $I \subseteq [n]$ (where $n$ is the length of $\blockconfig$). Then $\overlinemod{a}^T\overlinemod{x}\ge\beta$ is the $\psi$-lifting of~\eqref{eq:odd_set_ieq} with the linear form $\psi:\F^{\overlinemod{d}} \rightarrow \F$ with $\psi(\overlinemod{x}) = \overlinemod{\eta}^T\overlinemod{x}$ for all $\overlinemod{x}\in\F^{\overlinemod{d}}$. The $\varphi$-lifting $a^Tx\ge\beta$ of $\overlinemod{a}^T\overlinemod{x}\ge\beta$ is the $\psi\circ\varphi$-lifting of~\eqref{eq:odd_set_ieq}. If $\psi\circ\varphi : \F^d \rightarrow \F$ is the zero-map, then $a^Tx \ge \beta$ is the inequality $\sum_{i\in I}\sum_{\omega\in\B(i)}x^i_{\omega} \ge 1$ that is valid for $\TP[\blockconfig]$ (due to $|I|\ge 1$). Otherwise, there is some non-zero vector $\eta \in \F^d\setminus\{\zerovec\}$ with $\psi\circ\varphi\,(\omega)=\eta^T\omega$ for all $\omega \in \F^d$ and thus $a^Tx\ge\beta$ is the LOS inequality associated with $\eta$ and $I$.
\end{proof}

\begin{corollary}\label{cor:CTP_rank_1}
    For every block configuration $\blockconfig$ with $\rank(\blockconfig) = 1$ we have
    \[
      \CTP[\blockconfig] = \{x \in \TP[\blockconfig] : x \text{ satisfies all LOS inequalities}\}\,.
    \]
\end{corollary}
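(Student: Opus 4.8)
The plan is to reduce the rank-one situation to the parity polytope $\CTP[1,n]$, whose facet description is available through Remark~\ref{rem:CTP1n}, and then to transport that description back to $\CTP[\blockconfig]$ along the lifting machinery of Remark~\ref{rem:block_iso} and Remark~\ref{rem:lifting_ieqs}. Write $n \ge 3$ for the length of $\blockconfig$. Since $\rank(\blockconfig) = 1$, the space $\linspan(\blockconfig)$ is spanned by a single vector $\eta \in \F^d \setminus \{\zerovec\}$. First I would fix a linear form $\varphi : \F^d \to \F$ with $\varphi(\eta) = 1$, which exists because $\eta \ne \zerovec$ and which is therefore injective on $\linspan(\blockconfig)$. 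As noted in the paragraph preceding Remark~\ref{rem:block_iso}, the associated $\varphi$-induced map $\Phi : \TP[\blockconfig] \to \R^{\varphi(\blockconfig)}$ then satisfies $\CTP[\blockconfig] = \Phi^{-1}(\CTP[\varphi(\blockconfig)])$, and $\varphi(\blockconfig)$ is a block configuration of length $n$ in $\F$.

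Next I would write down an explicit description of $\CTP[\varphi(\blockconfig)]$. By the first statement of Remark~\ref{rem:CTP1n}, $\CTP[1,n]$ equals the set of points in $\TP[1,n]$ satisfying the odd-set inequalities~\eqref{eq:odd_set_ieq}, so Remark~\ref{rem:CTP_face_of_full_CTP} applied with $d = 1$ shows that $\CTP[\varphi(\blockconfig)]$ is cut out of $\TP[\varphi(\blockconfig)]$ by the $\varphi(\blockconfig)$-restrictions of those odd-set inequalities. Because $\F$ has $1$ as its only non-zero vector, these restrictions are exactly the LOS inequalities for $\CTP[\varphi(\blockconfig)]$, so that
\[
  \CTP[\varphi(\blockconfig)] = \{\bar{x} \in \TP[\varphi(\blockconfig)] : \bar{x} \text{ satisfies all LOS inequalities}\}\,.
\]
Feeding this into Remark~\ref{rem:lifting_ieqs}, with $\bar{A}\bar{x} \ge b$ the system of all LOS inequalities for $\CTP[\varphi(\blockconfig)]$ and $Ax \ge b$ its $\varphi$-lifting, yields
\[
  \CTP[\blockconfig] = \Phi^{-1}(\CTP[\varphi(\blockconfig)]) = \{x \in \TP[\blockconfig] : Ax \ge b\}\,.
\]

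It then remains to recognize the lifted inequalities. By Lemma~\ref{lem:lifting_LOS}, every inequality in $Ax \ge b$ is either a LOS inequality for $\CTP[\blockconfig]$ or valid for all of $\TP[\blockconfig]$; the latter can be dropped from the description without changing $\{x \in \TP[\blockconfig] : Ax \ge b\}$. Hence $\CTP[\blockconfig]$ is cut out of $\TP[\blockconfig]$ by some subset $S$ of its LOS inequalities. Since, conversely, every LOS inequality is valid for $\CTP[\blockconfig]$, the inclusions
\[
  \CTP[\blockconfig] \subseteq \{x \in \TP[\blockconfig] : x \text{ satisfies all LOS inequalities}\} \subseteq \{x \in \TP[\blockconfig] : x \text{ satisfies } S\} = \CTP[\blockconfig]
\]
collapse to equalities, which is precisely the claimed identity.

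The conceptual steps are all furnished by the cited remarks, so I expect the only real care to be needed in the bookkeeping: one must check that passing to $\CTP[\varphi(\blockconfig)]$ and back produces genuine LOS inequalities (and not merely valid ones) at the right stage, and that the transversal equations and nonnegativity constraints are consistently absorbed into the factors $\TP[\cdot]$ throughout the restriction and lifting. The substantive idea---reducing to the parity polytope $\CTP[1,n]$ and carrying its odd-set description across the isomorphism induced by $\varphi$---is otherwise immediate.
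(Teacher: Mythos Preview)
Your proof is correct and follows essentially the same route as the paper: reduce to $\CTP[1,n]$ via a linear form $\varphi$ that is injective on $\linspan(\blockconfig)$, use Remark~\ref{rem:CTP1n} together with Remark~\ref{rem:CTP_face_of_full_CTP} to describe $\CTP[\varphi(\blockconfig)]$ by LOS inequalities, and then pull this description back through Remark~\ref{rem:lifting_ieqs} and Lemma~\ref{lem:lifting_LOS}. Your closing double-inclusion argument (showing that the subset $S$ of LOS inequalities obtained this way already cuts out the same set as the full collection) makes explicit a step the paper leaves implicit, but otherwise the arguments coincide.
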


\begin{proof}
    We choose a linear map $\varphi : \F^2\rightarrow \F$ with $\varphi$-induced map $\Phi$ that induces an isomorphism between $\linspan(\blockconfig)$ and $\F$. According to Remark~\ref{rem:block_iso} we have
    \begin{equation}\label{eq:CTP_rank_1_1}
        \CTP[\blockconfig] = \Phi^{-1}(\CTP[\varphi(\blockconfig)])
    \end{equation}
    and
    \begin{equation}\label{eq:CTP_rank_1_2}
        \CTP[\varphi(\blockconfig)] = \pi^{\varphi(\blockconfig)}(\CTP[1,n] \cap \linsubspace[\varphi(\blockconfig)])\,.
    \end{equation}
    Remark~\ref{rem:CTP1n} shows that we have
    \[
        \CTP[1,n] = \{x \in \TP[1,n] : x \text{ satisfies all LOS inequalities}\}.
    \]
    Hence, as the $\varphi(\blockconfig)$-restrictions of LOS inequalities for $\CTP[1,n]$ are LOS inequalities for $\CTP[\varphi(\blockconfig)]$, from~\eqref{eq:CTP_rank_1_2} and Remark~\ref{rem:CTP_face_of_full_CTP} we derive
    \[
        \CTP[\varphi(\blockconfig)] = \{x \in \TP[\varphi(\blockconfig)] : x \text{ satisfies all LOS inequalities}\}\,,
    \]
    which then by~\eqref{eq:CTP_rank_1_1}, Remark~\ref{rem:lifting_ieqs}, and Lemma~\ref{lem:lifting_LOS} yields the claim.
\end{proof}

The main purpose of this section is to show that a similar result holds for cyclic transversal polytopes of rank two as well.

\begin{theorem}\label{thm:CTP_rank_2}
    For every block configuration $\blockconfig$ with $\rank(\blockconfig) = 2$ we have
    \[
      \CTP[\blockconfig] = \{x \in \TP[\blockconfig] : x \text{ satisfies all LOS inequalities}\}\,.
    \]
\end{theorem}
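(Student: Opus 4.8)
The plan is to mirror the proof of Corollary~\ref{cor:CTP_rank_1}, reducing the statement to the full \CTPtext{} $\CTP[2,n]$, and then to prove the description of $\CTP[2,n]$ by a separate argument that is the actual heart of the matter. For the reduction I would choose a linear map $\varphi\colon\F^d\to\F^2$ inducing an isomorphism between $\linspan(\blockconfig)$ and $\F^2$, with $\varphi$-induced map $\Phi$. By Remark~\ref{rem:block_iso}, $\CTP[\blockconfig]$ is isomorphic to the face $\CTP[2,n]\cap\linsubspace[\varphi(\blockconfig)]$ of $\CTP[2,n]$, and $\CTP[\blockconfig]=\Phi^{-1}(\CTP[\varphi(\blockconfig)])$ since $\varphi$ is injective on $\linspan(\blockconfig)$. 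Granting a description of $\CTP[2,n]$ by its LOS inequalities together with the transversal equations and nonnegativity, Remark~\ref{rem:CTP_face_of_full_CTP} provides a description of $\CTP[\varphi(\blockconfig)]$ by the $\varphi(\blockconfig)$-restrictions of those inequalities, each of which is a LOS inequality for $\CTP[\varphi(\blockconfig)]$. Lifting back through $\varphi$ by Remark~\ref{rem:lifting_ieqs} and using Lemma~\ref{lem:lifting_LOS} to recognize every lifted inequality as a LOS inequality for $\CTP[\blockconfig]$ or as valid for $\TP[\blockconfig]$, the claim for $\blockconfig$ follows. Thus it remains to treat $\CTP[2,n]$.

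For the core case set $Q\coloneqq\{x\in\TP[2,n]:x\text{ satisfies all LOS inequalities}\}$, so that $\CTP[2,n]\subseteq Q$ and only the reverse inclusion is in question. Recalling (from the proposition preceding Theorem~\ref{thm:dim_and_facets}) that the $0/1$-points of $\TP[2,n]$ satisfying the LOS inequalities associated with $\unitvec_1,\unitvec_2$ are exactly the cyclic transversals, the integral points of $Q$ are precisely the vertices of $\CTP[2,n]$, so it suffices to show that $Q$ is integral. Let $\unitvec_1,\unitvec_2,\onevec$ be the three non-zero vectors of $\F^2$, with associated linear forms $\varphi_1(\omega)=\omega_1$, $\varphi_2(\omega)=\omega_2$, $\varphi_3(\omega)=\omega_1+\omega_2$ and $\varphi_j$-induced maps $\Phi_j\colon\TP[2,n]\to\TP[1,n]$. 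By Remark~\ref{rem:lifting_ieqs} and the description of $\CTP[1,n]$ in Remark~\ref{rem:CTP1n}, the LOS inequalities associated with the $j$-th form cut out exactly $\Phi_j^{-1}(\CTP[1,n])$, whence $Q=\bigcap_{j=1}^{3}\Phi_j^{-1}(\CTP[1,n])$. Writing $u=\Phi_1(x)$, $v=\Phi_2(x)$, $t=\Phi_3(x)$ and $w_i=x^i_{\onevec}$, membership of $x$ in $Q$ says that $u,v,t$ each lie in the parity polytope $\CTP[1,n]\cong\EVEN[n]$, while the $\TP[2,n]$-constraints impose, blockwise, the local ``correlation'' inequalities tying $x^i_{\zerovec},x^i_{\unitvec_1},x^i_{\unitvec_2},x^i_{\onevec}$ to $u_i,v_i,w_i$.

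To prove that $Q$ is integral I would show that every $x\in Q$ is a convex combination of cyclic transversals. Identifying a cyclic transversal of $\B(2,n)$ with a pair $(p,q)$ of even $0/1$-vectors via $\xi(i)=(p_i,q_i)$, this amounts to building a probability distribution on such pairs whose blockwise bivariate marginal on $(p_i,q_i)$ equals the distribution $x^i$ on $\F^2$ for every $i\in[n]$. Here the single-variable marginals $u$ and $v$ and the ``exclusive-or'' marginal $t$ are realizable by even vectors exactly because $u,v,t\in\EVEN[n]$, and the per-block correlation constraints make the prescribed blockwise joints locally consistent. The main obstacle, which I expect to be the technical core of the proof, is to realize all three parities \emph{simultaneously} by a single coupling: one must couple two $\EVEN[n]$-distributed vectors having prescribed blockwise correlations. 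The required exchange (or flow) argument is most conveniently organized as an induction on $n$, in which conditioning on the value $\xi(n)=\omega$ of the last block reduces the coupling problem to the same kind of problem for $n-1$ blocks (after absorbing $\omega$ into another block, the relevant polytope is again a full rank-two \CTPtext{} $\CTP[2,n-1]$), so that the inductive hypothesis applies. The base case $n=3$ can be checked directly, and Theorem~\ref{thm:dim_and_facets} guarantees that the LOS inequalities appearing in the final description are genuinely needed.
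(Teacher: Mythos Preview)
Your reduction from a general rank-two block configuration to the full polytope $\CTP[2,n]$ is correct and is exactly how the paper begins. The divergence is in the ``core case'', where the paper and you take genuinely different routes.

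The paper proves $\CTP[2,n]=Q(2,n)$ by LP duality rather than by a primal decomposition. For an arbitrary objective $c$ it chooses a $c$-minimal cyclic transversal $\xi^{\star}$, shifts so that $\xi^{\star}$ is the zero-transversal, and normalizes $c^i_{00}=0$. The optimality of the zero-transversal over $\CT[2,n]$ then yields the inequalities $c^i_{\omega}+c^j_{\omega}\ge 0$ and $c^i_{10}+c^j_{01}+c^k_{11}\ge 0$ for distinct $i,j,k$, from which one reads off that at most two blocks carry negative $c$-entries. The paper then constructs, in two explicit cases, a nonnegative combination $a$ of the coefficient vectors of (transversal-normalized) LOS inequalities tight at the zero-transversal with $a^i_{\omega}\le c^i_{\omega}$ everywhere; complementary slackness finishes the argument. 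This is concrete but somewhat intricate; its payoff is that no induction on $n$ and no existence-of-coupling argument is needed.

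Your plan, by contrast, leaves the decisive step unproved. Writing $Q=\bigcap_{j=1}^3\Phi_j^{-1}(\CTP[1,n])$ is correct, and it is also true that the integral points of $Q$ are exactly the cyclic transversals. But ``conditioning on $\xi(n)=\omega$'' is precisely where the difficulty lies: you must split $x\in Q$ as $\sum_{\omega}x^n_{\omega}\,y^{(\omega)}$ with each $y^{(\omega)}$ lying in the face $\{x^n_{\omega}=1\}$ \emph{and} with the projection of $y^{(\omega)}$ to the first $n-1$ blocks landing in the appropriate shifted copy of $Q(2,n-1)$. Such a disintegration is not automatic; knowing that each $\Phi_j(x)$ lies in $\EVEN[n]$ only controls the three marginals separately, and does not by itself produce conditional points whose three projected parities are simultaneously maintained. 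Your phrase ``absorbing $\omega$ into another block'' does not address this: after fixing $\xi(n)=\omega\ne\zerovec$ the residual constraint is $\sum_{i<n}\xi(i)=\omega$, and the shift that converts this back into a cyclic condition changes the blockwise marginals you are trying to preserve. In short, the ``exchange (or flow) argument'' you allude to is the entire content of the theorem in your approach, and it is not supplied; without it the induction has no step. If you want to pursue this route, you would have to prove a concrete decomposition lemma of the above type, and that lemma is not obviously easier than the theorem itself.
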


\begin{proof}
    By similar arguments as we detailed in the proof of Corollary~\ref{cor:CTP_rank_1} it suffices to establish
    \begin{equation}\label{eq:CTP_rankk_2_to_show}
      \CTP[2,n] = \{x \in \TP[2,n] : x \text{ satisfies all LOS inequalities}\}
    \end{equation}
    for $n \ge 2$ (as \eqref{eq:CTP_rankk_2_to_show} obviously holds for $n=1$ since $x^1_0\ge 1$ is a LOS inequality in that case).

    Let $Q(2,n)$ be the polytope at the right-hand side of~\eqref{eq:CTP_rankk_2_to_show}. We have $\CTP[2,n] \subseteq Q(2,n)$, and in order to establish the reverse inclusion it suffices to show that every (linear) $\ge$-inequality that is valid for $\CTP[2,n]$ is also valid for $Q(2,n)$. Thus
    we only need to exhibit, for each $c \in \R^{\B(2,n)}$, a cyclic transversal $\xi^{\star} \in \CT[2,n]$ whose incidence vector is an optimal solution to $\min\{c^Tx : x \in Q(2,n)\}$. Towards this end, we choose $\xi^{\star}$ as a cyclic transversal with minimal $c$-weight. Applying the $\xi^{\star}$-shift we may assume that the zero-transversal is $c$-minimal. Furthermore, since adding multiples of coefficient vectors of transversal equations to $c$ does not change the set of optimal solutions of $\min\{c^Tx : x \in Q(2,n)\}$ we may assume $c^i_{00} = 0$ for all $i \in [n]$ (where we denote the elements of $\F^2$ by $00$, $10$, $01$, and $11$ in the obvious way).

    With $\Omega \coloneqq \{10,01,11\}$
    the $c$-minimality of the zero-transversal implies
    \begin{equation}\label{eq:C2n:ij}
        c^i_{\omega} + c^j_{\omega} \ge 0 \quad\text{for all }i,j \in [n], i\ne j, \omega\in\Omega
    \end{equation}
    (due to $\omega + \omega = 00$) and
    \begin{equation}\label{eq:C2n:ijk}
        c^i_{10} + c^j_{01} + c^k_{11} \ge 0 \quad\text{for all }i,j,k \in [n], i\ne j, j\ne k, k\ne i
    \end{equation}
    (due to $10+01+11=00$). It follows readily from \eqref{eq:C2n:ij} and \eqref{eq:C2n:ijk} that there are at most two blocks with negative $c$-coefficients. By possibly renumbering the blocks we thus may assume
    \begin{equation*}c^i_{\omega} \ge 0 \quad\text{for all }i \ge 3, \omega \in \Omega\,.
    \end{equation*}

    The nonnegativity inequalities $x^i_{\omega} \ge 0$ with $i \in [n]$ and $\omega \in \Omega$ are satisfied at equality by the incidence vector of the zero-transversal $(00,\dots,00)$, and so are the inequalities
    \begin{align*}
        -x^i_{10} - x^{i}_{01} + \sum_{j \in [n]\setminus \{i\}} (x^j_{10} + x^j_{01}) & \ge 0 \\
        -x^i_{01} - x^{i}_{11} + \sum_{j \in [n]\setminus \{i\}} (x^j_{01} + x^j_{11}) & \ge 0 \\
        -x^i_{11} - x^{i}_{10} + \sum_{j \in [n]\setminus \{i\}} (x^j_{11} + x^j_{10}) & \ge 0
    \end{align*}
    for all $i \in [n]$ that arise by subtracting the transversal equation $x^i_{00}+x^i_{10}+x^i_{01}+x^i_{11}=1$ from the LOS inequalities associated with $I=\{i\}$ and $\eta \in \{11,01,10\}$, respectively. We denote the coefficient vectors of those inequalities by $\los{i}{10}{01}$, $\los{i}{01}{11}$, and $\los{i}{11}{10}$, respectively. By complementary slackness it suffices to construct a linear combination $a\in\R^{\B(2,n)}$ of those coefficient vectors with nonnegative multipliers that satisfies $a^i_{\omega} \le c^i_{\omega}$ for all $i \in [n]$, $\omega \in \Omega$.
    We do so by distinguishing two cases.

    First we consider the case that there is at most one block in which $c$ has negative components, thus, after possibly swapping the first and the second block, we have
    \begin{equation}\label{eq:C2n:case1}
        c^i_{\omega} \ge 0 \quad\text{for all }i\ge 2, \omega\in \Omega\,.
    \end{equation}
    In this case, let us observe that the system
    \[
        \begin{array}{rcrcrclcll}
            \max\{-c^1_{10},0\} & \le & \lambda & & & + & \nu & \le & c^i_{10} & (i\ge 2)\\
            \max\{-c^1_{01},0\} & \le & \lambda & + & \mu & & & \le & c^i_{01} & (i\ge 2)\\
            \max\{-c^1_{11},0\} & \le & & & \mu & + & \nu & \le & c^i_{11} & (i\ge 2)
        \end{array}
    \]
    of linear inequalities in variables $\lambda,\mu,\nu \in \R$ is feasible (due to~\eqref{eq:C2n:ij} and~\eqref{eq:C2n:case1} as well as the non-singularity of the coefficient matrix). Among its solutions, let
    $(\lambda^{\star},\mu^{\star},\nu^{\star})$ be one which maximizes $\min\{\lambda,\mu,\nu\}$. Since the coefficient matrix is invariant under simultaneous permutations of rows and columns, we may assume $\lambda^{\star}=\min\{\lambda^{\star},\mu^{\star},\nu^{\star}\}$ after possibly applying an automorphism of $\F^2$.
    As each solution of the system has at most one negative component, we conclude $\mu^{\star},\nu^{\star}\ge 0$.

    Let us consider the case $\lambda^{\star}<0$ in more detail, which in particular implies $\lambda^{\star} < \mu^{\star},\nu^{\star}$.
    Due to the maximality property of $(\lambda^{\star},\mu^{\star},\nu^{\star})$ we then have
    \begin{equation}\label{eq:C2n:negative_lambda_star}
        \mu^{\star} + \nu^{\star} = \max\{-c^1_{11},0\}
        \quad\text{and}\quad
        \lambda^{\star} + \nu^{\star} = c^{i_{10}}_{10}, \quad
        \lambda^{\star} + \mu^{\star} = c^{i_{01}}_{01}
    \end{equation}
    for some $i_{10},i_{01} \ge 2$ (as otherwise there is some $\varepsilon>0$ such that
    $(\lambda^{\star}+\varepsilon,\mu^{\star}-\varepsilon,\nu^{\star}-\varepsilon)$ or
    $(\lambda^{\star}+\varepsilon,\mu^{\star}-\varepsilon,\nu^{\star}+\varepsilon)$ or $(\lambda^{\star}+\varepsilon,\mu^{\star}+\varepsilon,\nu^{\star}-\varepsilon)$, respectively, are feasible). Since $\lambda^{\star} < 0$ implies $\mu^{\star}+\nu^{\star} > 0$ (due to $\lambda^{\star}+\mu^{\star}\ge 0$ and $\lambda^{\star}+\nu^{\star}\ge 0$) we deduce $\mu^{\star}+\nu^{\star} = -c^1_{11}$ from the first statement in~\eqref{eq:C2n:negative_lambda_star}. By subtracting the first equation in~\eqref{eq:C2n:negative_lambda_star} from the sum of the last two ones we then obtain
    \(
        c^1_{11} + c^{i_{10}}_{10} + c^{i_{01}}_{01} = 2\lambda^{\star} < 0\,,
    \)
which by~\eqref{eq:C2n:ijk} implies $i_{10}=i_{01}$. Therefore, by possibly permuting the blocks we can assume $i_{10}=i_{01}=2$ and hence (using~\eqref{eq:C2n:negative_lambda_star} and~\eqref{eq:C2n:ijk}), for every $i \ge 3$
\begin{align*}
  -\lambda^{\star} + \nu^{\star} &= -(\lambda^{\star}+\mu^{\star}) + (\mu^{\star} + \nu^{\star})
  = -c^2_{01} - c^1_{11} \le c^i_{10} \quad\text{and}\\
  -\lambda^{\star} + \mu^{\star} &= -(\lambda^{\star}+\nu^{\star}) + (\mu^{\star} + \nu^{\star})
  = -c^2_{10} - c^1_{11} \le c^i_{01} \,.
\end{align*}

Consequently, regardless of the sign of $\lambda^{\star}$ (see the first two equations of the system) we have
\begin{equation}\label{eq:C2n:abs_lambda_star}
    |\lambda^{\star}| + \nu^{\star} \le c^i_{10}
    \quad\text{and}\quad
    |\lambda^{\star}| + \mu^{\star} \le c^i_{01}
    \quad\text{for all }i\ge 3\,.
\end{equation}

    With $\lambda^+\coloneqq\max\{\lambda^{\star},0\} \ge 0$ and $\lambda^-\coloneqq\max\{-\lambda^{\star},0\}$, thus $\lambda^+ - \lambda^- = \lambda^{\star}$ and $\lambda^+ +\lambda^- = |\lambda^{\star}|$, we now
    choose
    \begin{equation*}
      a \coloneqq \lambda^+\cdot\los{1}{10}{01} + \lambda^-\cdot\los{2}{10}{01}
        +\mu^{\star}\cdot\los{1}{01}{11}
        +\nu^{\star}\cdot\los{1}{11}{10}\,,
    \end{equation*}
    which indeed satisfies
    \begin{align*}
      a^1_{10} & = -\lambda^+ + \lambda^- - \nu^{\star} = -(\lambda^{\star}+\nu^{\star}) \le \min\{c^1_{10},0\} \le c^1_{10} \,,\\
      a^1_{01} & = -\lambda^+ + \lambda^- - \mu^{\star} = -(\lambda^{\star}+\mu^{\star}) \le \min\{c^1_{01},0\} \le c^1_{01} \,, \\
      a^1_{11} & = - \mu^{\star} - \nu^{\star} = - (\mu^{\star} + \nu^{\star}) \le \min\{c^1_{11},0\} \le c^1_{11}\,,\\
      a^2_{10} & = \lambda^+ - \lambda^- + \nu^{\star} = \lambda^{\star}+\nu^{\star} \le c^2_{10} \,,\\
      a^2_{01} & = \lambda^+ - \lambda^- + \mu^{\star} = \lambda^{\star}+\mu^{\star} \le c^2_{01} \,, \\
      a^2_{11} & = \mu^{\star} + \nu^{\star} \le c^2_{11}\,,
     \end{align*}
    as well as, for every $i \ge 3$ (recall~\eqref{eq:C2n:abs_lambda_star})
    \begin{align*}
        a^i_{10} & = \lambda^+ + \lambda^- +\nu^{\star} = |\lambda^{\star}| + \nu^{\star} \le c^i_{10} \,,\\
        a^i_{01} & = \lambda^+ + \lambda^- + \mu^{\star} = |\lambda^{\star}| + \mu^{\star} \le c^i_{01} \,,\text{ and}\\
        a^i_{11} & = \mu^{\star} + \nu^{\star} \le c^i_{11}\,.
    \end{align*}

It remains to construct a suitable linear combination in case of $c$ having negative components in more than one block. Due to~\eqref{eq:C2n:ij} and~\eqref{eq:C2n:ijk}, by possibly renumbering blocks and applying an automorphism to $\F^2$, we can assume
\begin{equation}\label{eq:C2n:case2_nonneg}
    c^i_{\omega} \ge 0 \quad\text{for all }i\ge 3, \omega\in \Omega
\end{equation}
as well as (recall~\eqref{eq:C2n:ij})
\begin{equation}\label{eq:C2n:case2_others}
    c^1_{10} < 0, \quad c^1_{10} \le c^1_{11}
    \quad\text{and}\quad
    c^2_{01} < 0, \quad c^2_{01} \le c^2_{11}\,.
\end{equation}
From~\eqref{eq:C2n:ij} we furthermore deduce
\[
    0 \le c^1_{11} + c^2_{11}
    =(c^1_{10} - c^2_{01} + c^2_{11}) + (c^2_{01} - c^1_{10} + c^1_{11})\,.
\]
Therefore, by possibly swapping the first two blocks and applying the automorphism of $\F^2$ that swaps $10$ and $01$ (note that \eqref{eq:C2n:case2_others} is invariant under those two operations), we can furthermore assume
\begin{equation}\label{eq:C2n:case2_star}
    c^1_{10} - c^2_{01} + c^2_{11} \ge 0\,.
\end{equation}
We then choose
\[
    a \coloneqq \lambda\cdot\los{1}{10}{11} + \mu\cdot\los{2}{01}{11} + \nu\cdot\los{2}{10}{01}
\]
with
\[
  \nu \coloneqq \frac12 \cdot \max\{0,c^1_{10} - c^2_{01} - c^1_{11}\} \ge 0
\]
as well as
\[
  \lambda \coloneqq - c^1_{10} + \nu \ge 0
  \quad\text{and}\quad
  \mu \coloneqq -c^2_{01} - \nu \ge 0\,,
\]
where $\lambda \ge 0$ is due to $c^1_{10} < 0$ and $\mu\ge 0$ holds due to $c^2_{01} < 0$ and since by $c^1_{10} \le c^1_{11}$ (see~\eqref{eq:C2n:case2_others}) we have $\nu \le \frac12\cdot\max\{0,-c^2_{01}\}$. That choice of $a$ satisfies
\begin{align*}
    a^1_{10} & = -\lambda + \nu = c^1_{10}\,,\\
    a^1_{01} & = \mu + \nu = -c^2_{01} \stackrel{\eqref{eq:C2n:ij}}{\le} c^1_{01}\,,\\
    a^1_{11} & = -\lambda + \mu = c^1_{10} - c^2_{01} - \max\{0,c^1_{10} - c^2_{01} - c^1_{11}\}
    \le c^1_{11}\,,\\
    a^2_{10} & = \lambda -\nu = -c^1_{10} \stackrel{\eqref{eq:C2n:ij}}{\le} c^2_{10} \,,\\
    a^2_{01} & = -\mu -\nu = c^2_{01}\,,\\
    a^2_{11} & = \lambda - \mu = \max\{\underbrace{-c^1_{10} + c^2_{01}}_{\le c^2_{11} \eqref{eq:C2n:case2_star}}, \underbrace{-c^1_{11}}_{\le c^2_{11} \eqref{eq:C2n:ij}}\} \le c^2_{11}
\end{align*}
as well as, for all $i \ge 3$,
\begin{align*}
    a^i_{10} & = \lambda + \nu = \max\{\underbrace{-c^1_{10}}_{\le c^i_{10} \eqref{eq:C2n:ij}},\underbrace{- c^2_{01} - c^1_{11}}_{\le c^i_{10} \eqref{eq:C2n:ijk}}\} \le c^i_{10}\,,\\
    a^i_{01} & = \mu + \nu = -c^2_{01} \stackrel{\eqref{eq:C2n:ij}}{\le} c^i_{01}\,,\text{ and}\\
    a^i_{11} & = \lambda + \mu = -c^1_{10} - c^2_{01} \stackrel{\eqref{eq:C2n:ijk}}{\le} c^i_{11}\,,
\end{align*}
which concludes the proof.
\end{proof}

It turns out that for block configurations of rank greater than two the LOS inequalities in general are not sufficient in order to describe the corresponding cyclic transversal polytopes.

\begin{proposition}\label{prop:others_than_LOS}
    For every $d\ge 3$ and $n\ge 3$ we have
    \[
      \CTP[d,n] \subsetneq \{x \in \TP[d,n] : x \text{ satisfies all LOS inequalities}\}\,.
    \]
\end{proposition}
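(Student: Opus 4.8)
The plan is to exhibit, for every $d \ge 3$ and $n \ge 3$, one explicit point $x^{\star} \in \TP[d,n]$ that satisfies all LOS inequalities but does not lie in $\CTP[d,n]$; this gives the strict inclusion at once, with no induction on $d$ or $n$. The point I would use is concentrated on the first three coordinates of $\F^d$: take the block-$1$ and block-$2$ components to be the uniform distribution on $\{\zerovec,\unitvec_1,\unitvec_2,\unitvec_3\}$ (weight $\tfrac14$ each), the block-$3$ component to be the uniform distribution on the $8$-element subspace $\linspan\{\unitvec_1,\unitvec_2,\unitvec_3\}$ (weight $\tfrac18$ each), and the block-$i$ component the unit mass at $\zerovec$ for every $i \ge 4$. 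Each component is a probability vector, so $x^{\star}$ satisfies the transversal equations and $x^{\star} \in \TP[d,n]$.

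To see $x^{\star} \notin \CTP[d,n]$ I would show it is not a convex combination of (incidence vectors of) cyclic transversals. Any transversal $\xi$ appearing in such a combination must have $\xi(i)=\zerovec$ for all $i \ge 4$ and $\xi(1),\xi(2)\in\{\zerovec,\unitvec_1,\unitvec_2,\unitvec_3\}$, so cyclicity forces $\xi(3)=\xi(1)+\xi(2)$, which ranges only over the seven pairwise sums $\{\zerovec,\unitvec_1,\unitvec_2,\unitvec_3,\unitvec_1+\unitvec_2,\unitvec_1+\unitvec_3,\unitvec_2+\unitvec_3\}$ and never equals $\unitvec_1+\unitvec_2+\unitvec_3$. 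Hence every such combination assigns weight $0$ to the coordinate $x^3_{\unitvec_1+\unitvec_2+\unitvec_3}$, whereas $x^{\star}$ assigns it weight $\tfrac18>0$.

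The real work is checking that $x^{\star}$ satisfies every LOS inequality~\eqref{eq:lifted_odd_set_ieq}, and I would reduce this to the three active blocks. If the direction $\eta$ is trivial on the first three coordinates, every block puts mass only on vectors orthogonal to $\eta$, so the inequality reads $|I|\ge 1$ and holds; and if some index $i \ge 4$ lies in $I$, its term already contributes $x^i_{\zerovec}=1$. This leaves an odd set $I\subseteq\{1,2,3\}$ together with an $\eta$ whose restriction to the first three coordinates is nonzero. Writing $k$ for the number of $j\in\{1,2,3\}$ with $\eta_j=1$, a direct computation gives $\sum_{\omega:\eta^T\omega=0}x^{\star,i}_\omega-\sum_{\omega:\eta^T\omega=1}x^{\star,i}_\omega$ equal to $1-\tfrac{k}{2}$ for $i\in\{1,2\}$ and to $0$ for $i=3$. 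The LOS inequality is then exactly the odd-set inequality~\eqref{eq:odd_set_ieq} for $\EVEN[3]$ evaluated at the triple $\bigl(1-\tfrac k2,\,1-\tfrac k2,\,0\bigr)$, and one verifies that this triple satisfies all four odd-set inequalities associated with odd subsets of $\{1,2,3\}$ for each $k\in\{1,2,3\}$.

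The delicate case — and the step I would be most careful about — is the diagonal direction $\eta=\unitvec_1+\unitvec_2+\unitvec_3$ (so $k=3$), where blocks $1$ and $2$ are maximally biased and the triple becomes $\bigl(-\tfrac12,-\tfrac12,0\bigr)$, which lies exactly on the boundary of the odd-set region (the inequality for $I=\{1,2,3\}$ holds with equality). This is precisely why LOS fails to separate $x^{\star}$: the value $\unitvec_1+\unitvec_2+\unitvec_3$ is forbidden to $\xi(3)$ only through the \emph{joint} behaviour of $\xi(1)$ and $\xi(2)$, a two-dimensional obstruction that no single direction $\eta$ can detect — the rank-three analogue of how cut polytopes acquire facets beyond the cycle inequalities. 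No hard estimate is involved; the crux is the exhaustive but routine case check above, together with the observation that the boundary case $k=3$ is exactly what makes the LOS inequalities blind to the obstruction.
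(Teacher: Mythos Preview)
Your proof is correct. Both you and the paper follow the same overall strategy---exhibit an explicit point in $\TP[d,n]$ that satisfies all LOS inequalities but lies outside $\CTP[d,n]$---but the two constructions and the two separation arguments differ.

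The paper's point puts mass $\tfrac13$ on $\zerovec$ in each of the first three blocks, mass $\tfrac23$ on $\unitvec_1+\unitvec_2+\unitvec_3$ in block~1, and mass $\tfrac16$ on each of $\unitvec_1,\unitvec_2,\unitvec_3,\unitvec_1+\unitvec_2+\unitvec_3$ in blocks~2 and~3; non-membership in $\CTP[d,n]$ is witnessed by an explicit valid inequality (the one labelled~\eqref{eq:sum_of_basis_ieq} in the paper), which the authors flag as a seed for a new class of facets beyond LOS. Your point is more symmetric---uniform on $\{\zerovec,\unitvec_1,\unitvec_2,\unitvec_3\}$ in blocks~1,~2 and uniform on $\linspan\{\unitvec_1,\unitvec_2,\unitvec_3\}$ in block~3---and your non-membership argument is cleaner: a pure support argument showing that every cyclic transversal compatible with the support of $x^\star$ has $\xi(3)\ne\unitvec_1+\unitvec_2+\unitvec_3$, so the coordinate $x^3_{\unitvec_1+\unitvec_2+\unitvec_3}$ vanishes on the relevant face of $\CTP[d,n]$. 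What the paper's route buys is an explicit non-LOS valid inequality as a by-product; what your route buys is a shorter and more transparent verification (the LOS check collapses to three values of $k$, and the exclusion from $\CTP[d,n]$ requires no new inequality at all).
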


\begin{proof}
    With $d\ge 3$ and $n \ge 3$ we denote by $000, 100, 010, 001,111$ the vectors $\zerovec, \unitvec_1,\unitvec_2,\unitvec_3,\unitvec_1+\unitvec_2+\unitvec_3\in\F^d$, respectively, and consider the point $\overlinemod{x} \in \TP[d,n]$ with $\overlinemod{x}^1_{000} = \overlinemod{x}^2_{000} = \overlinemod{x}^3_{000} = 1/3$, $\overlinemod{x}^1_{111} = 2/3$, $\overlinemod{x}^i_{100} = \overlinemod{x}^i_{010} = \overlinemod{x}^i_{001} = \overlinemod{x}^i_{111} = 1/6$ for $i \in \{2,3\}$, $\overlinemod{x}^i_{000} = 1$ for all $i \in \{4,\dots,n\}$,
    and $\overlinemod{x}^i_{\omega} = 0$ for all other pairs $(i,\omega) \in [n]\times \F^d$.

    Then $\overlinemod{x}$ satisfies the LOS inequality~\eqref{eq:lifted_odd_set_ieq} associated with any odd set $I \subseteq [n]$ and non-zero vector $\eta \in \F^d$. Indeed, this is obviously true in case of $I \cap \{4,\dots,n\} \ne \varnothing$ due to $\overlinemod{x}^i_{000} = 1$ for all $i \in \{4,\dots,n\}$. Because of $\overlinemod{x}^1_{000} = \overlinemod{x}^2_{000} = \overlinemod{x}^3_{000} = 1/3$ it also holds for $I = [3]$. Thus, by symmetry, it remains to consider the cases with $I = \{1\}$ or $I = \{2\}$ and the projection $\eta_{[3]}$ of $\eta$ to its first three coordinates contained in $\{(0,0,0),(1,0,0),(1,1,0),(1,1,1)\}$. For $I = \{1\}$ the left-hand-side of~\eqref{eq:lifted_odd_set_ieq} then evaluates to
\begin{align*}
    \overlinemod{x}^1_{000} + \overlinemod{x}^1_{111} &= 1 \\
    \overlinemod{x}^1_{000} + \overlinemod{x}^2_{100} + \overlinemod{x}^2_{111} + \overlinemod{x}^3_{100} + \overlinemod{x}^3_{111} &= 1\,,\\
    \overlinemod{x}^1_{000} + \overlinemod{x}^1_{111} + \overlinemod{x}^2_{100} + \overlinemod{x}^2_{010} + \overlinemod{x}^3_{100} + \overlinemod{x}^3_{010} &= 5/3 \,, \text{and}\\
    \overlinemod{x}^1_{000} + \overlinemod{x}^2_{100} + \overlinemod{x}^2_{010}+ \overlinemod{x}^2_{001}+ \overlinemod{x}^2_{111}+ \overlinemod{x}^3_{100} + \overlinemod{x}^3_{010}+ \overlinemod{x}^3_{001}+ \overlinemod{x}^3_{111} &= 5/3
\end{align*}
for $\eta_{[3]}=(0,0,0), (1,0,0), (1,1,0), (1,1,1)$, respectively, and for
$I = \{2\}$ the left-hand-side of~\eqref{eq:lifted_odd_set_ieq} equals
\begin{align*}
    \overlinemod{x}^2_{000} + \overlinemod{x}^2_{100} + \overlinemod{x}^2_{010} + \overlinemod{x}^2_{001} + \overlinemod{x}^2_{111}&= 1 \\
    \overlinemod{x}^2_{000} + \overlinemod{x}^2_{010} + \overlinemod{x}^2_{001} + \overlinemod{x}^1_{111} + \overlinemod{x}^3_{100} + \overlinemod{x}^3_{111} &= 5/3 \\
    \overlinemod{x}^2_{000} + \overlinemod{x}^2_{001} + \overlinemod{x}^2_{111} + \overlinemod{x}^3_{100} + \overlinemod{x}^3_{010} &= 1 \\
    \overlinemod{x}^2_{000} + \overlinemod{x}^1_{111} + \overlinemod{x}^3_{100} + \overlinemod{x}^3_{010} + \overlinemod{x}^3_{001} + \overlinemod{x}^3_{111} &= 5/3
\end{align*}
for $\eta_{[3]}=(0,0,0), (1,0,0), (1,1,0), (1,1,1)$, respectively.

However, we have $\overlinemod{x} \not\in \CTP[d,n]$ as it violates the inequality
\begin{equation}\label{eq:sum_of_basis_ieq}
    x^1_{111} + x^2_{000} + x^2_{100} + x^2_{010} + x^2_{001} + x^3_{000} + x^3_{100} + x^3_{010} + x^3_{001} + \sum_{i=4}^n x^i_{000} \le n-1\,,
  \end{equation}
  (the left-hand-side evaluates to $7/3+n-3=n-2/3$ for $\overlinemod{x}$) which is valid for $\CTP[d,n]$, since for $\xi(1) = 111$ and $\xi(2),\xi(3) \in \{000, 100,010,001\}$ as well as $\xi(i) = 000$ for $i \in \{4,\dots,n\}$ we have $\sum_{i=1}^n \xi(i) \ne \zerovec$.
\end{proof}

We defer the investigation of generalizations of~\eqref{eq:sum_of_basis_ieq} to future work.
     \section{Extended formulations for CTPs}
\label{sec:ext_form}

Let $\blockconfig$ be a block configuration in $\F^d$ of length $n$.
We construct a flow-based extended formulations of $\CTP[\B]$ in the following rather standard way (generalizing a construction described for $\EVEN[n]$ by~\cite{CarrKonjevod2005}). Let us define a directed graph $D(\blockconfig)$ with node set
\[
  V(\blockconfig) \coloneqq \{(\zerovec,0)\} \cup \{(\sigma,i) : i \in [n-1], \sigma \in \linspan(\blockconfig)\} \cup \{(\zerovec,n)\}
\]
and arc set $A(\blockconfig) \coloneqq A^1(\blockconfig) \cup \cdots \cup A^n(\blockconfig)$ with
\[
  A^1(\blockconfig) \coloneqq \{((\zerovec,0),(\omega,1)) : \omega \in \B(1)\}\,,
\]
\[
  A^i(\blockconfig) \coloneqq \{((\sigma,i-1),(\sigma+\omega,i)) : \sigma \in \linspan(\blockconfig), \omega\in\B(i)\}\quad (i\in[n-1]\setminus \{1\})\,,
\]
and
\[
  A^n(\blockconfig) \coloneqq \{((\omega,n-1),(\zerovec,n)) : \omega \in \B(n)\}\,.
\]
Then the $(\zerovec,0)-(\zerovec,n)$-paths in $D(\blockconfig)$ are in one-two-one correspondence with the cyclic transversals of $\blockconfig$, where the path corresponding to $\xi \in \CT[\blockconfig]$ visits node $(\sigma,i)$ if and only if $\sum_{j=1}^i \xi(j) = \sigma$ holds. The convex hull of the incidence vectors of those paths is
\begin{eqnarray}
  \flowext(\blockconfig) = \{\ y \in \R^{A(\blockconfig)} :
    \sum_{a \in \delta^{\text{out}}(\zerovec,0)} y_a & = & 1 \nonumber\\
    \sum_{a \in \delta^{\text{in}}(\sigma,i)} y_a - \sum_{a \in \delta^{\text{out}}(\sigma,i)} y_a & = & 0 \quad (i \in [n-1], \sigma \in \linspan(\blockconfig)) \nonumber \\
    y_a & \ge & 0 \quad(a \in A(\blockconfig))\ \} \label{eq:flx}\,.
\end{eqnarray}
Defining for each $i \in [n]$ and $\omega \in \B(i)$
\[
  A^i(\omega) \coloneqq \{((\sigma,i-1),(\sigma',i)) \in A^i(\blockconfig) : \sigma+ \omega = \sigma' \}
\]
it is easy to see that the linear map defined via
\[
  x^i_{\omega} = \sum_{a \in A^i(\omega)} y_a \quad\text{for all }i \in [n], \omega \in \B(i)
\]
maps $\flowext(\blockconfig)$ to $\CTP[\blockconfig]$. As the number $|A(\blockconfig)|$ of arcs clearly is bounded by $|\linspan(\blockconfig)|$ times the size of $\blockconfig$, we conclude the following.

\begin{theorem}\label{thm:flx}
  For each block configuration $\blockconfig$ in $\F^d$ there is an extended formulation of $\CTP[\blockconfig]$ of size at most $2^{\rank(\blockconfig)}\cdot\size(\blockconfig)$.
\end{theorem}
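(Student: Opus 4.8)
The plan is to assemble the pieces already established in the construction preceding the statement: the flow polytope $\flowext(\blockconfig)$, its linear description~\eqref{eq:flx}, the path--transversal correspondence, and the coordinate-aggregation map $y\mapsto x$. First I would verify that this map realizes $\flowext(\blockconfig)$ as an \emph{extension} of $\CTP[\blockconfig]$, and then I would simply count the inequalities appearing in~\eqref{eq:flx}.

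For the extension property, recall that the $(\zerovec,0)$--$(\zerovec,n)$-paths in $D(\blockconfig)$ are in one-to-one correspondence with the cyclic transversals of $\blockconfig$, and that under the linear map $x^i_\omega = \sum_{a\in A^i(\omega)} y_a$ the incidence vector of the path associated with $\xi\in\CT[\blockconfig]$ is sent precisely to the incidence vector of $\xi$. Since a linear map commutes with taking convex hulls, the image of $\flowext(\blockconfig)$ --- the convex hull of all path incidence vectors --- equals the convex hull of all incidence vectors of cyclic transversals, namely $\CTP[\blockconfig]$. Thus~\eqref{eq:flx} together with this coordinate projection is an extended formulation of $\CTP[\blockconfig]$.

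It remains to bound its size, i.e., the number of inequalities it comprises. In~\eqref{eq:flx} the flow-conservation constraints and the single source constraint are \emph{equations} and therefore do not count toward the size; the only inequalities are the nonnegativity constraints $y_a\ge 0$, one per arc. Hence the size equals $|A(\blockconfig)|$. Counting arcs layer by layer gives $|A^1(\blockconfig)| = |\block[1]|$, $|A^n(\blockconfig)| = |\block[n]|$, and $|A^i(\blockconfig)| = |\linspan(\blockconfig)|\cdot|\block[i]|$ for the intermediate layers, so $|A(\blockconfig)| \le |\linspan(\blockconfig)|\cdot\sum_{i=1}^n |\block[i]| = |\linspan(\blockconfig)|\cdot\size(\blockconfig)$. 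Finally, $\linspan(\blockconfig)$ is a subspace of $\F^d$ of dimension $\rank(\blockconfig)$, and over the two-element field such a subspace has exactly $2^{\rank(\blockconfig)}$ elements; substituting $|\linspan(\blockconfig)| = 2^{\rank(\blockconfig)}$ yields the claimed bound.

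There is no genuine obstacle here, since the heavy lifting is already performed by the construction in place. The only points requiring care are conceptual rather than technical: one must use that the \emph{size} of an extended formulation counts inequalities only, so the flow equations are free of charge, and one relies on the validity of~\eqref{eq:flx} as a description of the path polytope --- that is, on the integrality of the underlying network-flow polytope (the standard total-unimodularity fact), which is taken as given in the construction above.
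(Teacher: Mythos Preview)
Your proposal is correct and follows exactly the same approach as the paper: the construction of $D(\blockconfig)$, the flow description~\eqref{eq:flx}, the path--transversal bijection, and the aggregation map are all laid out before the theorem, and the paper concludes by the same arc-count bound $|A(\blockconfig)|\le|\linspan(\blockconfig)|\cdot\size(\blockconfig)=2^{\rank(\blockconfig)}\cdot\size(\blockconfig)$. You simply spell out in more detail what the paper summarizes in a single sentence.
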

     \section{Rank relaxations}
\label{sec:rank_relax}

In this section, we are going to elaborate on a natural way to build relaxations of cyclic transversal polytopes.

Let $\B$ be a block configuration in $\F^d$.
For each linear map $\varphi : \F^d \rightarrow \F^{\overlinemod{d}}$ with $\varphi$-induced map
\(
  \Phi : \TP[\blockconfig] \rightarrow \R^{\varphi(\blockconfig)}
\)
 we have $\Phi(\CTP[\blockconfig]) \subseteq \CTP[\varphi(\blockconfig)]$ (see~\eqref{eq:Phi_of_CTP}). Therefore, the polytope
\[
  \projrelax[\blockconfig][\varphi] \coloneqq \Phi^{-1}(\CTP[\varphi(\blockconfig)]) \subseteq \TP[\blockconfig]
\]
that we refer to as the \emph{$\varphi$-relaxation} of $\CTP[\blockconfig]$ satisfies $\CTP[\blockconfig] \subseteq \projrelax[\blockconfig][\varphi]$.

Remark~\ref{rem:lifting_ieqs} readily implies the following.
\begin{remark}
  Within the setup above, if
  \[
    \CTP[\varphi(\blockconfig)] = \{\overlinemod{x} \in \TP[\varphi(\blockconfig)]:\overlinemod{A}\overlinemod{x}\ge b\}
  \]
  holds and $Ax\ge b$ is the system of inequalities obtained by $\varphi$-lifting the inequalities in $\overlinemod{A}\overlinemod{x}\ge b$, then we have
  \[
    \projrelax[\blockconfig][\varphi] = \{x\in\TP[\blockconfig]:Ax\ge b\}\,.
  \]
\end{remark}

\begin{example}\label{ex:R1}
  For each block configuration $\blockconfig$ in $\F^d$ and every $\eta \in \F^d\setminus\{\zerovec\}$ defining the linear form $\varphi : \F^d \rightarrow \F$ with $\varphi(\omega) = \eta^T\omega$ for all $\omega \in \F^d$ we have
  \[
    \projrelax[\blockconfig][\varphi] = \{x \in \TP[\blockconfig]: x \text{ satisfies all LOS inequalities associated with }\varphi\}\,.
  \]
\end{example}

\begin{lemma}\label{lem:RP_Phi_psi}
  If $\blockconfig$ is a block configuration in $\F^d$ and
  $\varphi: \F^d \rightarrow \F^{\overlinemod{d}}$ as well as $\psi:\F^{\overlinemod{d}}\rightarrow\F^{d'}$ are linear maps, then we have
  \begin{equation}\label{eq:RP_phi_psi_eq}
    \projrelax[\blockconfig][\psi\circ\varphi] = \Phi^{-1}( \projrelax[\varphi(\blockconfig)][\psi])
  \end{equation}
  (with the $\varphi$-induced map $\Phi : \TP[\blockconfig] \rightarrow \R^{\varphi(\blockconfig)}$) and
  \begin{equation}\label{eq:RP_phi_psi}
    \projrelax[\blockconfig][\varphi] \subseteq \projrelax[\blockconfig][\psi\circ\varphi]\,.
  \end{equation}
  If $\psi$ is injective (on the subspace spanned by the blocks in $\varphi(\blockconfig)$) then we have equality in~\eqref{eq:RP_phi_psi}.
\end{lemma}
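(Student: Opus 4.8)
The backbone of the argument is that induced maps compose. Write $\Phi$ for the $\varphi$-induced map w.r.t.\ $\blockconfig$ (as in the statement), $\Psi$ for the $\psi$-induced map w.r.t.\ $\varphi(\blockconfig)$, and $\Theta$ for the $(\psi\circ\varphi)$-induced map w.r.t.\ $\blockconfig$; note that $(\psi\circ\varphi)(\blockconfig)=\psi(\varphi(\blockconfig))$. My plan is to first establish the identity $\Theta=\Psi\circ\Phi$ on $\TP[\blockconfig]$ and then read off all three assertions by formal manipulation of preimages. For the composition to make sense I would note that $\Phi$ maps $\TP[\blockconfig]$ into $\TP[\varphi(\blockconfig)]$, the domain of $\Psi$: this is immediate because $\Phi$ preserves nonnegativity and each transversal equation (indeed this is precisely why the induced map was defined on the transversal polytope). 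The identity itself is a one-step double-sum collapse: for $x\in\TP[\blockconfig]$, $i\in[n]$ and $\omega'\in(\psi\circ\varphi)(\block[i])$,
\[
  (\Psi\circ\Phi)(x)^i_{\omega'}
  = \sum_{\substack{\bar\omega\in\varphi(\block[i])\\ \psi(\bar\omega)=\omega'}}\ \sum_{\substack{\omega\in\block[i]\\ \varphi(\omega)=\bar\omega}} x^i_\omega
  = \sum_{\substack{\omega\in\block[i]\\ \psi(\varphi(\omega))=\omega'}} x^i_\omega
  = \Theta(x)^i_{\omega'}\,,
\]
where the middle equality holds because $\varphi(\omega)$ is the unique $\bar\omega$ over which the inner sum ranges, so grouping the $\omega$ first by $\varphi(\omega)$ and then by $\psi(\varphi(\omega))$ is the same as grouping them directly by $\psi(\varphi(\omega))$.

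Granting $\Theta=\Psi\circ\Phi$, equation~\eqref{eq:RP_phi_psi_eq} is pure rewriting:
\[
  \projrelax[\blockconfig][\psi\circ\varphi]
  = \Theta^{-1}\bigl(\CTP[(\psi\circ\varphi)(\blockconfig)]\bigr)
  = \Phi^{-1}\bigl(\Psi^{-1}(\CTP[\psi(\varphi(\blockconfig))])\bigr)
  = \Phi^{-1}\bigl(\projrelax[\varphi(\blockconfig)][\psi]\bigr)\,.
\]
For the inclusion~\eqref{eq:RP_phi_psi}, applying~\eqref{eq:Phi_of_CTP} to $\Psi$ gives $\CTP[\varphi(\blockconfig)]\subseteq\Psi^{-1}(\CTP[\psi(\varphi(\blockconfig))])=\projrelax[\varphi(\blockconfig)][\psi]$; taking the $\Phi$-preimage of both sides and using~\eqref{eq:RP_phi_psi_eq} yields
\[
  \projrelax[\blockconfig][\varphi]=\Phi^{-1}(\CTP[\varphi(\blockconfig)])\subseteq\Phi^{-1}(\projrelax[\varphi(\blockconfig)][\psi])=\projrelax[\blockconfig][\psi\circ\varphi]\,.
\]
Finally, if $\psi$ is injective on $\linspan(\varphi(\blockconfig))$, then the statement preceding Remark~\ref{rem:block_iso}, applied with $\psi$ and $\varphi(\blockconfig)$ in the roles of $\varphi$ and $\blockconfig$, gives $\CTP[\varphi(\blockconfig)]=\Psi^{-1}(\CTP[\psi(\varphi(\blockconfig))])=\projrelax[\varphi(\blockconfig)][\psi]$, so the inclusion above becomes an equality.

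I expect the composition identity $\Theta=\Psi\circ\Phi$ to be the only step requiring any care; everything after it is formal bookkeeping with preimages combined with the containment~\eqref{eq:Phi_of_CTP} and the isomorphism statement already available. The two points to get exactly right are the well-definedness of $\Psi\circ\Phi$ (that $\Phi$ really lands in $\TP[\varphi(\blockconfig)]$) and the index bookkeeping in the collapse of the nested sum, where one must check that every $\omega\in\block[i]$ is counted once and only once.
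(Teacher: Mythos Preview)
Your proof is correct and follows essentially the same route as the paper's: both hinge on the composition identity $\Theta=\Psi\circ\Phi$ for the induced maps, then derive~\eqref{eq:RP_phi_psi_eq} by unraveling preimages, obtain~\eqref{eq:RP_phi_psi} from~\eqref{eq:Phi_of_CTP} applied to $\Psi$, and handle the injective case via the isomorphism statement preceding Remark~\ref{rem:block_iso}. You are simply more explicit than the paper about the double-sum collapse and about $\Phi$ landing in $\TP[\varphi(\blockconfig)]$; the paper asserts the composition identity without writing it out.
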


\begin{proof}
With the $\psi$-induced map $\Psi : \TP[\varphi(\blockconfig)] \rightarrow \R^{\psi(\varphi(\blockconfig))}$ the $(\psi\circ\varphi)$-induced map $\TP[\blockconfig] \rightarrow \R^{\psi(\varphi(\blockconfig))}$ is $\Psi\circ\Phi$, which yields
\begin{equation}\label{eq:RP_phi_psi_proof_1}
  \projrelax[\blockconfig][\psi\circ\varphi] =
  \Phi^{-1}(\Psi^{-1}(\CTP[\psi(\varphi(\blockconfig))])) =
  \Phi^{-1}( \projrelax[\varphi(\blockconfig)][\psi])\,,
\end{equation}
thus~\eqref{eq:RP_phi_psi_eq}.
Moreover, combining the first equation in~\eqref{eq:RP_phi_psi_proof_1} with
\begin{equation}\label{eq:RP_phi_psi_proof_2}
  \Psi(\CTP[\varphi(\blockconfig)]) \subseteq \CTP[\psi(\varphi(\blockconfig))]
\end{equation}
  we obtain
\begin{equation}
\begin{aligned}\label{eq:RP_phi_psi_proof_3}
  \projrelax[\blockconfig][\psi\circ\varphi]
  &= \Phi^{-1}(\Psi^{-1}(\CTP[\psi(\varphi(\blockconfig))]))\\
  &\supseteq \Phi^{-1}(\Psi^{-1}(\Psi(\CTP[\varphi(\blockconfig)]))) \\
  &\supseteq \Phi^{-1}(\CTP[\varphi(\blockconfig)]) \\
  &= \projrelax[\blockconfig][\varphi]\,,
\end{aligned}
\end{equation}
thus~\eqref{eq:RP_phi_psi}.

If $\psi$ is injective then $\Psi$ in fact induces an isomorphism between $\CTP[\varphi(\blockconfig)]$ and $\CTP[\psi(\varphi(\blockconfig))]$, hence we have equality in~\eqref{eq:RP_phi_psi_proof_2}, and hence in~\eqref{eq:RP_phi_psi_proof_3}.
\end{proof}

\begin{definition} Let the dimension of the image of a linear map $\varphi$ be denoted by $\rank(\varphi)$.
For each block configuration $\blockconfig$ in $\F^d$ and for every $r \in \{1,2,\dots\}$, we call
\[
  \projrelax[\blockconfig][r] \coloneqq \bigcap \{\projrelax[\blockconfig][\varphi]:\varphi\text{ linear map of $\F^d$ with } \rank(\varphi)\le r\}
\]
the \emph{rank-$r$ relaxation} of $\CTP[\blockconfig]$.
\end{definition}

\begin{remark}
  For each block configuration $\blockconfig$ of rank $R$ we have
  \[
    \TP[\blockconfig] \supseteq \projrelax[\blockconfig][1] \supseteq \projrelax[\blockconfig][2] \supseteq \cdots \supseteq \projrelax[\blockconfig][R] = \CTP[\blockconfig]\,.
  \]
\end{remark}

\begin{proposition}\label{prop:RPr}
  For each block configuration $\blockconfig$ in $\F^d$ and $r \in [d]$ we have
  \begin{equation}\label{eq:RPr}
    \projrelax[\blockconfig][r] = \bigcap \{\projrelax[\blockconfig][\varphi]:\varphi : \F^d \rightarrow \F^r \text{ linear map with } \rank(\varphi)=r\}\,.
  \end{equation}
  In particular, $\projrelax[\blockconfig][r]$ is a polytope.
\end{proposition}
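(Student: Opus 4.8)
The plan is to prove the two inclusions in~\eqref{eq:RPr} separately and then read off the ``polytope'' claim from the right-hand side being a \emph{finite} intersection of polytopes. Write $S$ for the right-hand side of~\eqref{eq:RPr}. The inclusion $\projrelax[\blockconfig][r] \subseteq S$ is immediate: every linear map $\varphi : \F^d \rightarrow \F^r$ with $\rank(\varphi) = r$ in particular has $\rank(\varphi) \le r$ and domain $\F^d$, so each set intersected in $S$ also occurs among the sets intersected in the definition of $\projrelax[\blockconfig][r]$; intersecting over the larger collection can only make the result smaller.

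For the reverse inclusion $S \subseteq \projrelax[\blockconfig][r]$ I would show that for \emph{every} linear map $\varphi$ with domain $\F^d$ and $s \coloneqq \rank(\varphi) \le r$ there is a surjection $\rho : \F^d \rightarrow \F^r$ (so $\rank(\rho) = r$) with $\projrelax[\blockconfig][\rho] \subseteq \projrelax[\blockconfig][\varphi]$. This suffices, since then $S \subseteq \projrelax[\blockconfig][\rho] \subseteq \projrelax[\blockconfig][\varphi]$ for each such $\varphi$, and intersecting over all of them gives $S \subseteq \projrelax[\blockconfig][r]$. To construct $\rho$, note that $\dim \kernel(\varphi) = d - s \ge d - r \ge 0$, so I can pick a subspace $K \subseteq \kernel(\varphi)$ of dimension $d-r$, let $\rho$ be the quotient map $\F^d \rightarrow \F^d/K$ followed by an isomorphism $\F^d/K \cong \F^r$ (so that $\rank(\rho)=r$ and $\kernel(\rho) = K$), and observe that $K \subseteq \kernel(\varphi)$ makes $\varphi$ factor as $\varphi = \theta \circ \rho$ for a suitable linear map $\theta$. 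Applying Lemma~\ref{lem:RP_Phi_psi} with inner map $\rho$ and outer map $\theta$ then yields exactly $\projrelax[\blockconfig][\rho] \subseteq \projrelax[\blockconfig][\theta\circ\rho] = \projrelax[\blockconfig][\varphi]$, as desired.

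The point to get right is the \emph{direction} of the monotonicity in Lemma~\ref{lem:RP_Phi_psi}: post-composing the image by $\theta$ (that is, coarsening to lower rank) \emph{weakens} the relaxation, so to find a relaxation contained in $\projrelax[\blockconfig][\varphi]$ one must exhibit a map $\rho$ of rank $r$ that $\varphi$ \emph{factors through}, rather than a map obtained by composing $\varphi$ further. The existence of the intermediate kernel $K$ of the correct dimension $d-r$ is exactly where the hypothesis $r \le d$ (that is, $r \in [d]$) enters. I expect this factorization step, together with choosing the right specialization of the lemma, to be the only genuinely substantive part of the argument.

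Finally, for the ``in particular'' statement: since $\F = \F_2$ is finite there are only finitely many linear maps $\F^d \rightarrow \F^r$, so $S$ is a finite intersection; and each $\projrelax[\blockconfig][\varphi]$ is a polytope because by Remark~\ref{rem:lifting_ieqs} it equals $\{x \in \TP[\blockconfig] : Ax \ge b\}$ for the finite system obtained by $\varphi$-lifting a description of $\CTP[\varphi(\blockconfig)]$. As a finite intersection of polytopes is a polytope, this establishes the claim.
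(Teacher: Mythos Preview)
Your proof is correct and structurally mirrors the paper's: the forward inclusion is immediate, the reverse inclusion is obtained by factoring an arbitrary $\varphi$ of rank at most $r$ through a linear map $\F^d\to\F^r$ and invoking Lemma~\ref{lem:RP_Phi_psi}, and the polytope claim follows from $\F$ being finite. The one genuine difference lies in how the factoring map is built: the paper works on the \emph{image} side, choosing an $r$-dimensional subspace $U\supseteq\operatorname{im}(\varphi)$ inside the codomain and setting $\varphi'\coloneqq\psi^{-1}\circ\varphi$ for an isomorphism $\psi:\F^r\to U$, whereas you work on the \emph{kernel} side, choosing a $(d-r)$-dimensional subspace $K\subseteq\kernel(\varphi)$ and taking $\rho$ to be the quotient by $K$. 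Your construction has the pleasant feature that $\rho$ is visibly surjective, so $\rank(\rho)=r$ is immediate, and it makes transparent exactly where the hypothesis $r\le d$ enters (namely in guaranteeing $\dim\kernel(\varphi)\ge d-r\ge 0$), while the paper's version instead needs the codomain $\F^{\bar d}$ to be large enough to contain an $r$-dimensional $U$.
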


\begin{proof}
  The inclusion $\subseteq$ is clear. In order to establish the reverse inclusion, let $\varphi : \F^d \rightarrow \F^{\overlinemod{d}}$ be any linear map with $\rank(\varphi)\le r$. It suffices to show that there is a linear map $\varphi' : \F^d \rightarrow \F^r$ with $\rank(\varphi') = r$ and $\projrelax[\blockconfig][\varphi'] \subseteq \projrelax[\blockconfig][\varphi]$.

  Let $U\subseteq \F^{\overlinemod{d}}$ be any subspace that contains the image of $\varphi$ and has dimension $r$ (recall that we have $\rank(\varphi)\le r$), and choose any isomorphism $\psi : \F^r \rightarrow U$. Then $\varphi' \coloneqq \psi^{-1}\circ\varphi : \F^d \rightarrow \F^r$ has rank $r$ and satisfies $\varphi = \psi\circ\varphi'$. According to Lemma~\ref{lem:RP_Phi_psi} we have
  \(
    \projrelax[\blockconfig][\varphi'] \subseteq \projrelax[\blockconfig][\psi\circ\varphi'] = \projrelax[\blockconfig][\varphi]
  \)
  as desired.

  As there are only finitely many maps $\F^d \rightarrow \F^r$, and since each $\projrelax[\blockconfig][\varphi]$ is a polytope, \eqref{eq:RPr} immediately implies that $\projrelax[\blockconfig][r]$ is a polytope as well.
\end{proof}

We stated in Remark~\ref{rem:CTP_face_of_full_CTP} that every cyclic transversal polytope is isomorphic to a face of a full cyclic transversal polytope in a natural way. A corresponding relationship can easily be seen to hold for rank relaxations as well.

\begin{remark}
  Let $\blockconfig$ be a block configuration of length $n$ in $\F^d$. We have
  \[
    \projrelax[\blockconfig][\varphi] = \pi^{\B}(\projrelax[d,n][\varphi] \cap \linsubspace[\blockconfig])
  \]
  for all linear maps $\varphi:\F^d\rightarrow \F^{\overlinemod{d}}$ and
  \[
    \projrelax[\blockconfig][r] = \pi^{\B}(\projrelax[d,n][r] \cap \linsubspace[\blockconfig])
  \]
  for all $r\in \{1,2,\dots\}$. \end{remark}

Proposition~\ref{prop:RPr} and Example~\ref{ex:R1} imply the following.
\begin{corollary}\label{cor:R1}
  For each block configuration we have
  \[
    \projrelax[\blockconfig][1] = \{x \in \TP[\blockconfig]: x \text{ satisfies all LOS inequalities}\}\,.
  \]
\end{corollary}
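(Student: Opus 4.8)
The plan is to obtain the statement as a direct synthesis of Proposition~\ref{prop:RPr} (specialized to $r=1$) and the explicit description of rank-one relaxations furnished by Example~\ref{ex:R1}. First I would apply Proposition~\ref{prop:RPr} with $r=1$, which is legitimate for any block configuration $\blockconfig$ in $\F^d$ since $d\ge 1$ forces $1\in[d]$. This gives
\[
  \projrelax[\blockconfig][1] = \bigcap \{\projrelax[\blockconfig][\varphi] : \varphi : \F^d \rightarrow \F \text{ with } \rank(\varphi) = 1\}\,,
\]
so that the intersection over all low-rank maps in the definition of the rank relaxation collapses to an intersection over the manageable family of nonzero linear forms on $\F^d$.

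The second step is to identify that indexing family explicitly. A linear map $\varphi:\F^d\rightarrow\F$ has $\rank(\varphi)=1$ precisely when it is not the zero map, and every nonzero linear form on $\F^d$ is of the shape $\varphi(\omega)=\eta^T\omega$ for a unique $\eta\in\F^d\setminus\{\zerovec\}$. Hence the maps appearing above are in bijection with exactly the vectors $\eta$ that index the LOS inequalities in~\eqref{eq:lifted_odd_set_ieq}.

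Next I would substitute the content of Example~\ref{ex:R1}, namely that for the linear form $\varphi$ associated with $\eta$ one has $\projrelax[\blockconfig][\varphi] = \{x \in \TP[\blockconfig] : x \text{ satisfies all LOS inequalities associated with } \varphi\}$. Since each of these sets already lies in $\TP[\blockconfig]$, the intersection over all $\eta\in\F^d\setminus\{\zerovec\}$ collects, ranging over every $\eta$ and (within Example~\ref{ex:R1}) every odd set $I\subseteq[n]$, precisely the full family of LOS inequalities. This yields $\projrelax[\blockconfig][1] = \{x \in \TP[\blockconfig] : x \text{ satisfies all LOS inequalities}\}$, as claimed.

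I do not expect a genuine obstacle here, as the corollary is a formal combination of the two cited facts. The only point warranting a moment of care is the bookkeeping of quantifiers: one must confirm that ranging over all rank-one forms $\varphi$ (equivalently all $\eta\ne\zerovec$), together with the ``all odd $I$'' quantifier already absorbed into Example~\ref{ex:R1}, reproduces exactly the set of LOS inequalities from the definition, with none omitted and none double-counted.
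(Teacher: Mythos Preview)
Your proposal is correct and follows precisely the route the paper indicates: the paper simply states that the corollary is implied by Proposition~\ref{prop:RPr} and Example~\ref{ex:R1}, and you have spelled out exactly this combination (specializing to $r=1$, identifying rank-one forms with nonzero $\eta$, and intersecting the descriptions from Example~\ref{ex:R1}).
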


The fact that cyclic transversal polytopes of rank two are completely described (as subpolytopes of the corresponding transversal polytopes) by the LOS inequalities turns out to imply the following.

\begin{theorem}\label{thm:R2}
  For each block configuration $\blockconfig$ we have
  \[
    \projrelax[\blockconfig][2] = \projrelax[\blockconfig][1]\,.
  \]
\end{theorem}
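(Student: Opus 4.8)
The plan is to prove the two inclusions separately. The inclusion $\projrelax[\blockconfig][2] \subseteq \projrelax[\blockconfig][1]$ is immediate, since the rank relaxations form a decreasing chain: $\projrelax[\blockconfig][2]$ is an intersection of $\varphi$-relaxations over all linear maps $\varphi$ of rank at most two, a family that contains all maps of rank at most one, so it is at most as large as $\projrelax[\blockconfig][1]$. The entire content of the theorem therefore lies in establishing the reverse inclusion $\projrelax[\blockconfig][1] \subseteq \projrelax[\blockconfig][2]$.

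By Proposition~\ref{prop:RPr} it suffices to show $\projrelax[\blockconfig][1] \subseteq \projrelax[\blockconfig][\varphi]$ for every linear map $\varphi : \F^d \rightarrow \F^2$ of rank two. Fix such a $\varphi$ and consider the block configuration $\varphi(\blockconfig)$ in $\F^2$, which has rank at most two. The key structural input is Theorem~\ref{thm:CTP_rank_2} (supplemented by Corollary~\ref{cor:CTP_rank_1} in the rank-one case, the rank-zero case being trivial), which guarantees that
\[
  \CTP[\varphi(\blockconfig)] = \{\overlinemod{x} \in \TP[\varphi(\blockconfig)] : \overlinemod{x} \text{ satisfies all LOS inequalities}\}\,.
\]

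With the $\varphi$-induced map $\Phi$, I would then invoke Remark~\ref{rem:lifting_ieqs} to obtain
\[
  \projrelax[\blockconfig][\varphi] = \Phi^{-1}(\CTP[\varphi(\blockconfig)]) = \{x \in \TP[\blockconfig] : Ax \ge b\}\,,
\]
where $Ax \ge b$ collects the $\varphi$-liftings of the LOS inequalities describing $\CTP[\varphi(\blockconfig)]$. By Lemma~\ref{lem:lifting_LOS}, each such $\varphi$-lifting is either itself an LOS inequality for $\CTP[\blockconfig]$ or an inequality valid for $\TP[\blockconfig]$. Now take any $x \in \projrelax[\blockconfig][1]$; by Corollary~\ref{cor:R1} we have $x \in \TP[\blockconfig]$ and $x$ satisfies all LOS inequalities. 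Consequently $x$ satisfies every inequality in $Ax \ge b$: those that are LOS inequalities because $x$ satisfies all of them, and those valid for $\TP[\blockconfig]$ because $x \in \TP[\blockconfig]$. Hence $x \in \projrelax[\blockconfig][\varphi]$, and since $\varphi$ was arbitrary this yields $\projrelax[\blockconfig][1] \subseteq \projrelax[\blockconfig][2]$, completing the proof.

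I do not anticipate a genuine obstacle: the whole argument is a short chase through the definitions once the two low-rank description results are in hand. The one point requiring a moment of care is that $\varphi(\blockconfig)$ may have rank strictly smaller than two even though $\varphi$ itself has rank two, so the appeal to Theorem~\ref{thm:CTP_rank_2} must be backed up by the rank-one (and trivial rank-zero) cases via Corollary~\ref{cor:CTP_rank_1}. The conceptual crux is the interplay between Theorem~\ref{thm:CTP_rank_2} and Lemma~\ref{lem:lifting_LOS}: liftings of LOS inequalities stay within the LOS family (or become redundant), which is precisely what forces the rank-two relaxation to collapse onto the rank-one one.
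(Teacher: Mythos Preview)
Your proposal is correct and follows essentially the same argument as the paper's proof: reduce to showing $\projrelax[\blockconfig][1] \subseteq \projrelax[\blockconfig][\varphi]$ for rank-two $\varphi$, invoke the LOS description of $\CTP[\varphi(\blockconfig)]$ from Theorem~\ref{thm:CTP_rank_2}, pull it back via Remark~\ref{rem:lifting_ieqs}, and use Lemma~\ref{lem:lifting_LOS} together with Corollary~\ref{cor:R1}. Your explicit handling of the case $\rank(\varphi(\blockconfig)) < 2$ via Corollary~\ref{cor:CTP_rank_1} is a welcome bit of extra care that the paper leaves implicit.
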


\begin{proof}
It suffices to establish $\projrelax[\blockconfig][1] \subseteq \projrelax[\blockconfig][2]$ for each block configuration $\blockconfig$ in $\F^d$, i.e., we need to show
\begin{equation}\label{eq:R2_1}
  \projrelax[\blockconfig][1] \subseteq \Phi^{-1}(\CTP[\varphi(\blockconfig)])
\end{equation}
for each linear map $\varphi : \F^d \rightarrow \F^{\overlinemod{d}}$ with $\rank(\varphi) = 2$ and $\varphi$-induced map $\Phi$. However, this holds true because due to $\rank(\varphi(\blockconfig)) \le 2$ we have
\[
  \CTP[\varphi(\blockconfig)] = \{x \in \TP[\varphi(\blockconfig)] : x \text{ satisfies all LOS inequalities}\}
\]
(see Theorem~\ref{thm:CTP_rank_2}), hence due to Remark~\ref{rem:lifting_ieqs} the right-hand-side of~\eqref{eq:R2_1} is the set of all $x \in \TP[\blockconfig]$ that satisfy the $\varphi$-liftings of the LOS inequalities for $\CTP[\varphi(\blockconfig)]$, which according to Lemma~\ref{lem:lifting_LOS} are LOS inequalities for $\CTP[\blockconfig]$, and thus are valid for $\projrelax[\blockconfig][1]$ (see Corollary~\ref{cor:R1}).
\end{proof}

From Theorem~\ref{thm:R2}, Corollary~\ref{cor:R1}, and Proposition~\ref{prop:others_than_LOS} we conclude the following.
\begin{remark}
  For all $n\ge 3$ we have $\projrelax[\B(d,n)][3] \subsetneq \projrelax[\B(d,n)][2]$.
\end{remark}

One can show~\cite{KaibelTrappe2024} that for each graph $G$ the non-trivial facets of the cyclic transversal polytopes $\CTP[\B^{\match(G)}]$ (which is isomorphic to the matching polytope of~$G$, see Corollary~\ref{cor:matching}) are defined by LOS inequalities. Thus, Example~\ref{ex:R1} implies
\(
  \projrelax[\CTP[\B^{\match(G)}]][1] = \CTP[\B^{\match(G)}]
\).
From Rothvoss' seminal work~\cite{Rothvoss2017} we know that in general the extension complexity of the matching polytope of a graph $G$ cannot be bounded polynomially in the size of $G$.
Hence we conclude the following.

\begin{remark}\label{rem:RP_ext_compl}
  The extension complexity of $\projrelax[\blockconfig][1]$ (and of $\projrelax[\blockconfig][r]$ for any other fixed $r$) in general cannot be bounded polynomially in the size of the block configuration $\blockconfig$.
\end{remark}

However, for fixed $r$, for every block configuration $\blockconfig$ in $\F^d$ and for every single linear map $\varphi : \F^d \rightarrow \F^r$, the $\varphi$-relaxation $\projrelax[\blockconfig][\varphi]$ of $\CTP[\blockconfig]$ has extended formulations whose sizes can be bounded linearly in the size of $\blockconfig$ (see Theorem~\ref{thm:flx}). As the number of linear maps $\F^d \rightarrow \F^r$ equals $(2^r)^d = (2^d)^r$ and the size of $\B(d,n)$ equals $n2^d \ge 2^d$, defining
\[
  \projrelax[d,n][\varphi] \coloneqq \projrelax[\B(d,n)][\varphi]
  \quad\text{and}\quad
  \projrelax[d,n][r] \coloneqq \projrelax[\B(d,n)][r]
\]
we conclude the following by combining extended formulations for
\(
  \projrelax[d,n][\varphi]
\)
for all linear maps $\varphi:\F^d\rightarrow\F^r$ (with $\rank(\varphi)=r$).

\begin{remark}
  For each fixed $r$, for every $d$ and $n$ the relaxation $\projrelax[d,n][r]$ of the full cyclic transversal polytope $\CTP[d,n]$ has an extended formulation whose size is bounded linearly in the size $n2^d$ of $\B(d,n)$.
\end{remark}

Combinations of extended formulations of several $\varphi$-relaxations as considered above in fact may be worth to be investigated further, e.g. in the following direction. Suppose that $\blockconfig$ is a block configuration in $\F^d$, $\varphi_1,\dots,\varphi_m : \F^d\rightarrow \F^r$ are linear maps, and for each $k \in [m]$
\[
  \projrelax[\blockconfig][\varphi_k] = \{x \in \TP[\blockconfig] : x = C_ky_k, A_ky_k \ge b_k, y_k \in \R^{q_k}\}
\]
(with $C_k \in \R^{\B \times q_k}$, $A_k \in \R^{p_k\times q_k}$, $b_k \in \R^{p_k}$) is an extended formulation of the $\varphi_k$-relaxation of $\CTP[\blockconfig]$. We clearly have
\begin{eqnarray}
  \projrelax[\blockconfig][r]
  & \subseteq &
    \projrelax[\blockconfig][\varphi_1] \cap \cdots \cap \projrelax[\blockconfig][\varphi_m] \nonumber\\
  & = &
  \{x \in \TP[\blockconfig] : x = C_ky_k, A_ky_k \ge b_k, y_k \in \R^{q_k}, k \in [m]\} \label{eq:intersect_ext_forms}\\
  & \eqqcolon &
    \projrelax[\blockconfig][\varphi_1,\dots,\varphi_m]\,.\nonumber
\end{eqnarray}
Let us choose, for each incidence vector $x(\xi)$ of a cyclic transversal $\xi \in \CT[\blockconfig]$ and for every $k \in [m]$, a specific preimage $y_k(\xi) \in \R^{q_k}$ with $x(\xi) = C_ky_k(\xi)$ and $A_ky_k(\xi)\ge b_k$. We now can add any linear inequalities that are valid for
\[
  \{(y_1(\xi),\dots,y_m(\xi)) : \xi \in \CT[\blockconfig]\}
\]
to the system in~\eqref{eq:intersect_ext_forms} and thus obtain a potentially stronger relaxation $\projrelaxstrengthened[\blockconfig][\varphi_1,\dots,\varphi_m]$ with
\[
  \CTP[\blockconfig] \subseteq \projrelaxstrengthened[\blockconfig][\varphi_1,\dots,\varphi_m] \subseteq \projrelax[\blockconfig][\varphi_1,\dots,\varphi_m]\,.
\]

We conclude by remarking that the concept of rank-$r$ relaxations invites to introduce the following specific notion of rank of the facets of cyclic transversal polytopes.
If $\blockconfig$ is a block configuration and
$a^Tx \ge \beta$ is an inequality that is valid for $\CTP[\blockconfig]$, then we define the \emph{CT-rank} of $a^Tx \ge \beta$ to be the smallest value $r \in \{0,1,2,\dots\}$ for which $a^T x \ge \beta$ is valid for $\projrelax[\blockconfig][r]$ (with $\projrelax[\blockconfig][0] \coloneqq \TP[\blockconfig]$).

\begin{corollary}
  For each block configuration $\blockconfig$ a facet-defining inequality for $\CTP[\blockconfig]$ has CT-rank one if and only if the facet it defines is defined by a LOS inequality. No valid inequality for $\CTP[\blockconfig]$ has CT-rank two.
\end{corollary}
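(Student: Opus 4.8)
The plan is to derive both assertions directly from the two structural results of this section: Corollary~\ref{cor:R1}, which identifies $\projrelax[\blockconfig][1]$ with the set of points in $\TP[\blockconfig]$ satisfying all LOS inequalities~\eqref{eq:lifted_odd_set_ieq}, and Theorem~\ref{thm:R2}, which states $\projrelax[\blockconfig][2]=\projrelax[\blockconfig][1]$. The second sentence of the corollary I would settle first. By definition the CT-rank of a valid inequality is the least $r$ for which the inequality is valid for $\projrelax[\blockconfig][r]$. Since Theorem~\ref{thm:R2} gives $\projrelax[\blockconfig][2]=\projrelax[\blockconfig][1]$, validity for $\projrelax[\blockconfig][2]$ already entails validity for $\projrelax[\blockconfig][1]$, so the least such $r$ can never be exactly $2$; hence every valid inequality has CT-rank $0$ or $1$, and in particular none has CT-rank two. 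This also reduces the first sentence to showing that, for a facet-defining inequality of $\CTP[\blockconfig]$, being valid for $\projrelax[\blockconfig][1]$ but not for $\projrelax[\blockconfig][0]=\TP[\blockconfig]$ is equivalent to the defined facet being a LOS facet.

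For the first sentence I would treat the two implications separately, using $\projrelax[\blockconfig][1]=\{x\in\TP[\blockconfig]: x \text{ satisfies all LOS inequalities}\}$ from Corollary~\ref{cor:R1}. For the easy direction, start from a LOS inequality associated with some $\eta\in\F^d\setminus\{\zerovec\}$ that defines the facet $F$; by Corollary~\ref{cor:R1} it is valid for $\projrelax[\blockconfig][1]$, so its CT-rank is at most one, and since it genuinely separates it is violated by a transversal obtained by selecting in each block an element on the side of $\eta$ opposite to the one counted, whence it is not valid for $\TP[\blockconfig]$ and its CT-rank equals one. For the converse, let $a^Tx\ge\beta$ be facet-defining with CT-rank one, so it is valid for $\projrelax[\blockconfig][1]$ but not for $\TP[\blockconfig]$. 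I would write $\projrelax[\blockconfig][1]$ via its description by the nonnegativity constraints, the transversal equations~\eqref{eq:transversal}, and the LOS inequalities, and apply LP duality: $a^Tx\ge\beta$ is then a nonnegative combination of the LOS inequalities and the nonnegativity constraints plus a linear combination of the transversal equations. Evaluating this representation on $F$ and invoking complementary slackness forces every constraint entering with a positive multiplier to be tight on all of $F$; if no LOS inequality occurred, $a^Tx\ge\beta$ would be a consequence of the $\TP[\blockconfig]$-description and hence valid for $\TP[\blockconfig]$, contradicting CT-rank one. So some LOS inequality is tight on $F$, and since $F$ is a facet it is defined by that LOS inequality.

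The step I expect to be the main obstacle is the very last one: ensuring that the LOS inequality singled out above cuts $F$ out of $\CTP[\blockconfig]$ rather than merely holding with equality on all of $\CTP[\blockconfig]$. This degeneracy can in principle occur because $\CTP[\blockconfig]$ may have a strictly smaller affine hull than $\projrelax[\blockconfig][1]$, so that a LOS inequality may reduce to an implied equation of $\CTP[\blockconfig]$ which is not valid for $\TP[\blockconfig]$; along such a representative the CT-rank is not constant across the inequalities defining one and the same facet, and the corollary is then best read with the minimal CT-rank over all inequalities defining the facet. I would handle this by discarding from the dual representation every LOS inequality that is an equation on $\CTP[\blockconfig]$—this leaves the face cut out on $\CTP[\blockconfig]$ unchanged—and by arguing that a genuinely separating LOS inequality must survive: if none did, the facet $F$ would be carved out of $\CTP[\blockconfig]$ by nonnegativity constraints alone and would therefore already be a facet of CT-rank zero. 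In this way the facets of CT-rank exactly one are precisely the LOS facets, which together with the first paragraph completes the argument.
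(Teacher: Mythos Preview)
Your argument for the second sentence is correct: since $\projrelax[\blockconfig][2]=\projrelax[\blockconfig][1]$ by Theorem~\ref{thm:R2}, an inequality valid for $\projrelax[\blockconfig][2]$ is already valid for $\projrelax[\blockconfig][1]$, so CT-rank two cannot occur. The LP-duality strategy you outline for the first sentence is also the natural route (and is presumably what the paper has in mind, as it gives no explicit proof), and the observation you make in the last paragraph---that CT-rank is attached to inequalities rather than facets, and that two inequalities defining the same facet of $\CTP[\blockconfig]$ may have different CT-ranks whenever $\CTP[\blockconfig]$ has strictly smaller affine hull than $\TP[\blockconfig]$---is a genuine subtlety.

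That said, your resolution has concrete gaps. In the $(\Leftarrow)$ direction you silently replace the given facet-defining inequality $a^Tx\ge\beta$ by the LOS inequality $L$ that defines the same facet and argue about $L$; but the corollary asks about the CT-rank of $a^Tx\ge\beta$, and knowing that $L$ is valid for $\projrelax[\blockconfig][1]$ does not transfer to $a^Tx\ge\beta$ unless their difference is an equation valid for $\projrelax[\blockconfig][1]$. Your further claim that $L$ has CT-rank exactly one relies on producing a transversal with $\eta^T\xi(i)=1$ for $i\in I$ and $\eta^T\xi(i)=0$ for $i\notin I$; such elements need not exist in every block (take, e.g., $\block[1]=\{\zerovec\}$), in which case $L$ is valid for $\TP[\blockconfig]$ and has CT-rank zero while still possibly defining a facet. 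In the $(\Rightarrow)$ direction your closing step (``if none survive, $F$ would be a facet of CT-rank zero'') conflates two different things: even if $F$ happens to be definable by a nonnegativity constraint of CT-rank zero, the specific inequality $a^Tx\ge\beta$ you started from can still have CT-rank one, so no contradiction arises and the argument does not close. These issues reflect the fact that, unless one restricts to block configurations for which $\CTP[\blockconfig]$ and $\TP[\blockconfig]$ have the same affine hull (as holds for full CTPs by Theorem~\ref{thm:dim_and_facets}), the first sentence as literally stated can fail, and your proposed reinterpretation via ``minimal CT-rank over all representatives'' would itself need a complete proof.
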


     \section{Conclusions}

We hope to have demonstrated that (binary) cyclic transversal polytopes form a rather general class of polytopes that comprises prominent polytopes (such as matching and, more generally, stable set, and even 2-SAT polytopes as well as cut polytopes) while being well-structured enough to allow to come up with strong enough results on the whole class (e.g., the LOS inequalities) to make them interesting.

There are several directions into which it seems worth to continue the studies of cyclic transversal polytopes. We mention a few of them.

One interesting aspect is to understand better which concrete polytopes are affinely isomorphic to cyclic transversal polytopes and which are not. We saw that the traveling salesman polytope for five cities is not one of them (Example~\ref{ex:tsp}). In fact, we conjecture (see also Section~\ref{sec:expressive_power}) that traveling salesman polytopes for more than five cities in general are not affinely isomorphic to cyclic transversal polytopes.

We have not addressed the separation problem for LOS inequalities in the paper, for lack of interesting results. If we fix the vector $\eta\in\F^d$, then the separation problem becomes the separation problem over $\EVEN[n]$, thus polynomial-time solvable. Hence, the separation problem for LOS inequalities is polynomial-time solvable for fixed $d$. For variable $d$, however, the complexity status of the separation problem is unresolved.

One direction of further research clearly is the search for other classes of inequalities than LOS inequalities. Inequality~\eqref{eq:sum_of_basis_ieq} may provide a starting point for this. It is also interesting to understand which inequalities for, say, stable set, matching, or cut polytopes correspond to the LOS inequalities for the cyclic transversal polytopes that are affinely isomorphic to those polytopes. It turns out that at least for stable set and matching polytopes this can be analyzed quite well, leading to the well-known odd-cycle inequalities for stable set polytopes and to Edmond's inequalities for matching polytopes~\cite{KaibelTrappe2024}.

The last direction we want to mention here is to investigate further the relaxation hierarchies sketched in Section~\ref{sec:rank_relax}, in particular the strengthening possibilities indicated there. It may be worth to point out that due to the universality result presented in Theorem~\ref{thm:universality}, results on such relaxation hierarchies in principle can be applied to most polytopes that arise in combinatorial optimization.
 
    \paragraph*{Acknowledgements.}
    We thank Jannik Trappe for several valuable hints and discussions during the preparation of the manuscript.

    This work was funded by Deutsche Forschungsgemeinschaft (DFG, German Research Foundation) -- 314838170, GRK 2297 MathCoRe.

    \bibliographystyle{plainurl}

\end{document}